\newcommand{\utilde}{\tilde}
\newtheorem{theorem}{Theorem}[section]
\newtheorem{lemma}[theorem]{Lemma}
\newtheorem{proposition}[theorem]{Proposition}
\newtheorem{definition}[theorem]{Definition}
\newtheorem{remark}[theorem]{Remark}
\newcommand{\calQh}{ {\mathcal Q}_h}
\newcommand{\calQho}{ {\mathcal Q}_h}
\newcommand{\grad}{{\boldsymbol \nabla}}
\newcommand{\gradh}{{\boldsymbol{\mathcal G}}_h}
\renewcommand{\div}{\textrm{div}}
\newcommand{\curl}{\textbf{curl}}
\newcommand{\bdiv}{\textbf{div}}
\newcommand{\ccurl}{\textrm{curl}}
\newcommand{\Qk}{\mathcal{S}(\K)}
\newcommand{\Nb}{\vect{\mathcal{N}}_h}
\newcommand{\Nbo}{\stackrel{o}\Nb}
\newcommand{\calX}{{\vect{\mathcal{X}}}_h}
\newcommand{\Mk}{\vect{\mathcal{M}}(\K)}
\newcommand{\Rk}{\vect{\mathcal{R}}(\K)}
\newcommand{\Pp}{ \mathbb{P}}
\newcommand{\Pb}{\mathbb{ \vect{P}}}
\newcommand{\cMk}{\mathcal{M}(\K)}
\newcommand{\bMk}{ \vect{\mathcal{M}}(\K)}
\newcommand{\BDM}{{\bf BDM}}
\newcommand{\RT}{{\bf RT}}
\newcommand{\TR}{{\bf TR}}
\newcommand{\BDFM}{{\bf BDFM}}
\newcommand{\ND}{{\bf ND}}
\newcommand{\Velh}{\vect{V}_h}
\newcommand{\Velho}{\vect{V}_h}
\newcommand{\Velhoo}{\mathring{\vect{V}}_h}
\newcommand{\Nh}{\mathcal{N}_h}
\newcommand{\Nho}{\mathcal{N}_h}
\newcommand{\calWh}{ {\mathcal W}_h}
\newcommand{\calWho}{\stackrel{o}\calWh}
\newcommand{\R}{{\mathbb R}}
\newcommand{\Gsym}[1]{\vect{\varepsilon}(\vect{#1})}
\newcommand{\K}{T} 
\renewcommand{\O}{\Omega} 
\newcommand{\Eh}{{{\mathcal E}_h}} 
\newcommand{\Eho}{{{\mathcal E}^{o}_h}} 
\newcommand{\Ehb}{{{\mathcal E}^{\partial}_h}} 
\newcommand{\Th}{\mathcal{T}_h} 
\newcommand{\hK}{\h_{\K}} 
\newcommand{\h}{h} 
\newcommand{\bx}{{\bf x}}
\newcommand{\bu}{\vect{u}}
\newcommand{\f}{\vect{f}}
\newcommand{\vp}{\vect{\psi}}
\newcommand{\sig}{\vect{\sigma}}
\newcommand{\bv}{\vect{v}}
\newcommand{\bw}{\vect{w}}
\newcommand{\bvh}{\vect{v}_h}
\newcommand{\taub}{{\boldsymbol {\tau}}}
\newcommand{\av}[1]{\{#1\}} 
\newcommand{\n}{{\bf n}} 
\newcommand{\nK}{{\bf n}_{\K}}
\newcommand{\dx}{\,\mbox{d}x} 
\newcommand{\ds}{\,\mbox{d}s} 
\newcommand{\jump}[1]{\lbrack\!\lbrack\,#1\,\rbrack\!\rbrack} 
\newcommand{\trin}{|\!|\!|}
\newcommand{\re}{ \mathbb{R}}
\numberwithin{equation}{section}
\numberwithin{figure}{section}
\numberwithin{table}{section}
\renewcommand\lll{|\kern-1pt|\kern-1pt|} 
\newcommand{\salto}[1]{\left[\!\left[ #1 \right]\!\right]}
\newcommand{\vect}[1]{\boldsymbol{#1}}
\renewcommand{\lor }{\longrightarrow}
\newcommand{\p}{ \partial}
\newcommand{\triplenorm}[1]{%
  \left\vert\kern-0.9pt\left\vert\kern-0.9pt\left\vert #1
  \right\vert\kern-0.9pt\right\vert\kern-0.9pt\right\vert}
\begin{document}

\title[DG for Stokes]{A simple preconditioner for a discontinuous Galerkin method for the Stokes problem}

\author[B. Ayuso de Dios]{Blanca Ayuso de Dios}
\address{Centre de Recerca Matem\'{a}tica, UAB Science Faculty, 08193 Bellaterra, Barcelona, Spain}

\author[F. Brezzi]{Franco Brezzi}
 \address{IUSS-Pavia c/p IMATI-CNR, Via Ferrata 5/A 27100 Pavia Italy, and
Dept. of Math., KAU, PO Box 80203, Jeddah, 21589, Saudi Arabia}
\author[L. D. Marini]{L. Donatella Marini}
\address{Dipartimento di Matematica, Universit\`a di Pavia,
Via Ferrata 1, 27100 Pavia, Italy}

\author[J. Xu]{Jinchao Xu}\address{Department of Mathematics, Penn State
University, University Park PA 16802, USA}

\author[L. Zikatanov]{Ludmil Zikatanov}\address{Department of
  Mathematics, Penn State University, University Park PA 16802, USA}


\begin{abstract}
  In this paper we construct Discontinuous Galerkin approximations of
  the Stokes problem where the velocity field is
  $H(\div,\O)$-conforming. This implies that the velocity solution is
  divergence-free in the whole domain. This property can be exploited
  to design a simple and effective preconditioner for the final linear
  system.
\end{abstract}

\maketitle


\section{Introduction}\label{sec:0}

In this paper we present a preconditioning strategy for a family of
discontinuous Galerkin discretizations of the Stokes problem in a
domain $\Omega \subset \re^{d}, d=2,3$:
\begin{equation}\label{Stokes:A}
 \left\lbrace{
 \begin{aligned}
  -{ \bdiv} (2\nu \Gsym{\bu}) + \grad p ~&=~{\bf f}\quad \mbox{in }\O \\
  {\bf \div} \, \bu &=~0 \quad \mbox{in }\O
  \end{aligned}
  } \right.
  \end{equation}
where, with the usual notation, $\vect{u}$ is the velocity field, $p$
the pressure, $\nu$ the viscosity of the fluid, and $\Gsym{u}\in
[L^{2}(\O)]^{d\times d}_{{\rm sym}}$ is the symmetric (linearized) strain rate tensor defined by  $\Gsym{u}=\frac{1}{2} ( \grad \bu + (\grad \bu)^{T})$.

The methods considered here were introduced in \cite{WangYe} for the
Stokes problem and in \cite{dominik0} for the Navier-Stokes equations
when pure Dirichlet boundary conditions are prescribed.  In both
works, the authors showed that the approximate velocity field is
exactly divergence-free, namely it is $H(\div;\Omega)$-conforming and
divergence-free almost everywhere.  These same methods were also used
in \cite{domink:mhd0}.

Numerical methods that perserve divergence free condition exactly are
important from both practical and theoretical points of view. First of
all, it means that the numerical method conserves the mass everywhere,
namely, for any $D\subset \Omega$ we have
$$
\int_{\partial D}\bu\cdot \vect{n}=0.
$$
As an example of its theoretical importance, the exact divergence free
condition plays a crucial view for the stability of the mathematical
models (see \cite{Lin.F;Liu.C;Zhang.P.2005a}) and their numerical
discretizations (see \cite{Lee.Y;Xu.J2006}) for complex fluids.

The focus of this paper is to develop new solvers for the resulting
algebraic systems for this type of discretization by exploring the
divergence-free property.  In general, the numerical discretization of
the Stokes problem produces algebraic linear systems of equations of
the saddle-point type.  Solving such algebraic linear systems has been
the subject of considerable attention from various communities and
many different approaches can be used to solve them efficiently (see
\cite{ESW2005} and references cited therein). One popular approach is
to use a block diagonal preconditioner with two blocks: one containing
the inverse or a preconditioner of the stiffness matrix of a vector
Poisson discretization, and one containing the inverse of a lumped
mass matrix for the pressure.  This preconditioner when used in
conjunction with MINRES (MINimal RESidual) leads to a solver which is
uniformly convergent with respect to the mesh size.

While the existing solvers such as this diagonal preconditioner can
also be used for these DG methods, in this paper, we would like to
explore an alternative approach by taking the advantage of the
divergence-free property.  Our new approach reduces the solution of
the Stokes systems (which is indefinite) to the solution of several
Poisson equations (which are symmetric positive definite) by using
auxiliary space preconditioning techniques, which we
hope would open new doors for the design of algebraic solvers for PDE
systems that involve subsystems that are related to Stokes operator.

{In \cite{dominik0,WangYe} the classical Stokes operator is considered
  for the special case of purely homogeneous Dirichlet boundary
  conditions (no-slip Dirichlet's condition). While this special case
  is theoretically important, it does not model well most of the
  cases that occur in the engineering applications (for instance, it
  is not realistic in applications in immiscible two-phase flows,
  aeronautics, in weather forecasts or in hemodynamics).} For the pure
homogenous no-slip Dirichlet boundary conditions, we have the
following identity
\[
\int_\Omega \Gsym{u} : \Gsym{v}= \int_\Omega \grad \bu: \grad \bv.
\]
when $\bu$ and $\bv$ vanish on the boundary of $\Omega$.  This
identity can be used when deriving the variational formulation, thus
leading to simplifications of the analysis in the details related to
the Korn's inequality on the discrete level.


To extend the results in \cite{dominik0,WangYe} to this different
boundary condition we provide detailed analysis showing that the
resulting DG-$\vect{H}(\div;\Omega)$-conforming methods are stable and
converge with optimal order.  Furthermore, a key feature of the
DG-$\vect{H}(\div;\Omega)$-conforming schemes of providing a
divergence-free velocity approximation is satisfied as in
\cite{dominik0,WangYe}, by the appropriate choice of the
discretization spaces. This property is fully exploited in designing
and constructing efficient preconditioners and we reduce the solution
of the Stokes problem to the solution of a ``second-order'' problem in
the space $\curl\, H^1_0(\Omega)$.

We propose then a preconditioner for the solution of the corresponding
problem in $\curl \, H^1_0(\Omega)$. This is done by means of the
fictitious space \cite{NEP1991, NEP1992} (or auxiliary space
\cite{JXU96,Oswald96}) framework. The proposed preconditioner amounts
to the solution of one vector and two scalar Laplacians. The solution
of such systems can then be {\it efficiently} computed with classical
approaches, for instance the Geometric Multigrid (GMG) or Algebraic
Multigrid (AMG) methods.

Throughout the paper, we use the standard notation for Sobolev
spaces~\cite{Adams75}. For a bounded domain $D\subset \mathbb{R}^{d}$,
we denote by $H^{m}(D)$ the $L^2$-Sobolev space of order~$m \geq 0$
and by $\|\cdot \|_{m,D}$ and $| \cdot |_{m,D}$ the usual Sobolev norm
and seminorm, respectively. For $m = 0$, we write $L^{2}(D)$ instead
of $H^{0}(D)$. For a general summability index $p$, we also denote by
$W^{m,p}(D)$ the usual $L^p$-Sobolev spaces of order $m\geq 0$ with
norm $\|\cdot \|_{m,p,D}$ and seminorm $| \cdot|_{m,p,D}$.  By
convention, we use boldface type for the vector-valued analogues:
$\vect{H}^{m}(D)=[H^{m}(D)]^{d}$, likewise, we use boldface italics for the
symmetric-tensor-valued analogues: $ \vect{\mathcal
  H}^{m}(D):=[H^{m}(D)]^{d\times d}_{\rm{sym}}$. $H^{m}(D)/\mathbb{R}$ denotes
  the quotient space consisting of equivalence
classes of elements of $H^{m}(D)$ that differ by a constant; for $m=0$
the quotient space is denoted by $L^{2}(D)/\mathbb{R}$. We indicate by $L^{2}_{0}(D)$
the space of the $L^{2}(D)$ functions with zero average over~$D$
(which is obviously isomorphic to $L^{2}(D)/\mathbb{R}$). We use
$(\cdot\,,\cdot)_{D}$ to denote the inner product in the spaces
$L^{2}(D),\vect{L^{2}}(D)$, and $\vect{\mathcal L}^{2}(D)$.


\section{Continuous Problem}
In this section, we discuss the well posedness of the Stokes problem
which is of interest.  We remark that the results in
the paper are valid in two and three dimensions, although to make the
presentation more transparent we focus on the two dimensional case,
discussing only briefly the main changes (if any) needed to carry over the
results to three dimensions.

We begin by restating (for reader's convenience) the equations already
given in~\eqref{Stokes:A} with a bit more detail regarding the
boundary conditions.  For a simply connected polyhedral domain $\Omega
\subset \re^{d}, d=2,3$ with boundary $\Gamma=\partial\O$, we consider
the Stokes equations for a viscous incompressible fluid:
\begin{equation}\label{Stokes:F}
 \left\lbrace{
 \begin{aligned}
  -{ \bdiv} (2\nu \Gsym{\bu}) + \grad p ~&=~{\bf f}\quad \mbox{in }\O \\
  {\bf \div} \, \bu &=~0 \quad \mbox{in }\O
  \end{aligned}
  } \right.
  \end{equation}
On the boundary $\Gamma$ we impose kinematic boundary condition
\begin{equation}\label{bc}
\bu\cdot \n=0 \quad \mbox{ on   } \Gamma,
\end{equation}
together with the natural condition on the tangential component of the normal stresses
\begin{equation}\label{bc00}
 ((2\,\nu\,\Gsym{u} -p \mathbf{I})\n)\cdot \vect{t}=0 \quad \mbox{ on   } \Gamma,
\end{equation}
where $\mathbf{I}$ is the identity tensor. Note that as $\n\cdot \vect{t}\equiv 0$
then \eqref{bc00} is reduced to
\begin{equation}\label{bc0}
 (\Gsym{u}\n)\cdot \vect{t}=0 \quad \mbox{ on   } \Gamma.
\end{equation}

When the space
 \begin{equation}\label{defHn0}
 \vect{H}^{1}_{0,n}(\O)=\{\bv \in \vect{H}^{1}(\O) \,\,\, :\,\,\, \bv \cdot \n=0 \mbox{   on   } \Gamma\,\}
 \end{equation}
  is introduced, the  variational formulation of the Stokes problem  reads: {\it Find $(\bu , p)\in \vect{H}^{1}_{0,n}(\O) \times L^{2}(\O)/\re$ as the solution of:}
\begin{equation}\label{var:0}
\left\{\begin{aligned}
a(\bu,\bv)+b(\bv,p)&=(\f,\bv) \quad &\forall\, \bv \in \vect{H}^{1}_{0,n}(\O) &&\\
b(\bu,q)&=0 \quad\quad &\forall\, q \in L^{2}(\O)/\re &&
\end{aligned}\right.
\end{equation}
where  for all $\bu\in\vect{H}^{1}_{0,n}(\O)$,
$\bv\in\vect{H}^{1}_{0,n}(\O)$ and
$q\in\, L^{2}(\O)/\re$ the (bi)linear forms are defined by
\begin{equation*}
a(\bu,\bv) :=2\nu \int_{\O} \Gsym{u}: \Gsym{v} \dx, \quad
b(\bv,q):=-\int_{\O} q\, \div \,\bv \dx, \quad
(\f,\bv):= \int_{\O}\f\cdot\bv\dx.
\end{equation*}
For the classical mathematical treatment of the Stokes problem (where
the Laplace operator is used instead of the divergence of the stress tensor $\Gsym{u}$) existence and
uniqueness of the solution $(\bu,p)$ are very well known and have been
reported with different boundary conditions in many places (see for
instance \cite{ladyz, temam1, galdi:0,girault-raviart}).  The
Stokes problem considered here
\eqref{Stokes:F}-\eqref{bc}-\eqref{bc00} has been derived and used in
different applications \cite{temam2,beavers, guido-riviere}. 

{For the Stokes problem with the slip boundary conditions
  \eqref{bc}-\eqref{bc00}, existence, uniqueness and interior
  regularity was first established in \cite{solonikov73} (for even the
  more general linearized Navier-Stokes).  The study of well-posedness
  and regularity up to the boundary for the solutions of this problem
  has received substantial attention only in very recent years.  For
  example, analysis can be found in \cite{beirao2004, amrouche2011}
  for weak and strong solutions in the $H^{1}(\O)\times L^{2}(\O)$ and
  $W^{1,p}(\O)\times L^{p}(\O), \, 1<p<\infty$. In these works it is
  assumed that the boundary of $\O$ is at least of class
  $\mathcal{C}^{1,1}(\O)$ and the more general boundary condition of
  Navier slip-type is studied.  In \cite{harbir}, the authors provide
  the analysis in the $W^{1,p}(\O)\times L^{p}(\O), \, 1<p<\infty$ for
  less regular domains.
 
  Here, for the sake of completeness, we provide a very brief outline
  of the proof of well-posedness of the problem, in the case $\O$ is a
  polygonal or polyhedral domain (which is the relevant case for the
  numerical approximation we have in mind).}  By introducing the
operator $D_0=-\div: \vect{H}^{1}_{0,n}(\O) \lor L^{2}_{0}(\O)$, it
can be shown \cite{Brezzi.F.1974a,temam1} that $D_0$ is surjective,
i.e., Range$(D_0)=L^{2}_0(\O)$. Therefore, the operator $D_0$ has a
continuous lifting which implies that the continuous inf-sup condition
is satisfied. Hence, from the classical theory follows that to
guarantee the well-posedness of the Stokes problem
\eqref{Stokes:F}-\eqref{bc}, it is enough to show that the bilinear
form $a(\cdot,\cdot)$ is coercive; ie., there exists $\gamma_0>0$ such
that
\begin{equation}\label{cont:coer}
a(\bv,\bv)\geq \gamma_0 |\bv|_{1,\O}^{2} \qquad \forall \, \bv\in  \vect{H}^{1}_{0,n}(\O).
\end{equation}
Once continuity is established, existence, uniqueness and a-priori estimates follow in a standard way.
The proof of \eqref{cont:coer} requires a Korn inequality, that in general imposes some restrictions on the domain (see Remark \ref{rem:domain00}). For the case considered in this work the needed result  is contained in next Lemma:
 \begin{lemma}\label{korn:cont:new}
Let $\O\subset \re^{d}\;, d=2,3$ be a polygonal or polyhedral domain. Then, there
exists a constant $C_{Kn}>0$ (depending on the domain through its diameter and shape) such that
\begin{equation}\label{kornn0}
|\bv|_{1,\O}^{2} \leq C_{Kn}  \| \Gsym{\bv} \|_{0,\O}^{2}, \quad \forall\, \bv \in  \vect{H}^{1}_{0,n}(\O).
\end{equation}
\end{lemma}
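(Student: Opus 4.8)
The plan is to deduce \eqref{kornn0} from the classical second Korn inequality together with a compactness argument, the only delicate point being the identification of the kernel of the strain operator inside $\vect{H}^{1}_{0,n}(\O)$. First I would recall that on any Lipschitz (hence polygonal or polyhedral) domain one has Korn's second inequality
\[
|\bv|_{1,\O}^{2}\leq C\bigl(\|\Gsym{\bv}\|_{0,\O}^{2}+\|\bv\|_{0,\O}^{2}\bigr)\qquad\forall\,\bv\in\vect{H}^{1}(\O),
\]
together with the structural fact that, on a connected domain, $\Gsym{\bv}=0$ a.e.\ forces $\bv$ to be an infinitesimal rigid motion, i.e.\ $\bv(\bx)=\vect{a}+b\,\bx^{\perp}$ in 2D (with $\bx^{\perp}=(-x_{2},x_{1})$) and $\bv(\bx)=\vect{a}+\vect{b}\times\bx$ in 3D.

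The heart of the argument is then to show that the only rigid motion satisfying the constraint $\bv\cdot\n=0$ on $\Gamma$ is $\bv\equiv0$, and this is exactly where the polygonal/polyhedral hypothesis enters. In 2D I would parametrize a single edge as $\bx=\bx_{0}+s\,\vect{t}$ with unit tangent $\vect{t}$ and constant outward normal $\n$, and observe that along the edge $\bx^{\perp}\cdot\n=\bx_{0}^{\perp}\cdot\n+s\,(\vect{t}^{\perp}\cdot\n)$ with $\vect{t}^{\perp}\cdot\n=\pm1$; hence the vanishing of $\bv\cdot\n=\vect{a}\cdot\n+b\,(\bx^{\perp}\cdot\n)$ along the edge forces first $b=0$ (the coefficient of $s$) and then $\vect{a}\cdot\n=0$. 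Using two edges with non-parallel normals, which exist for any bounded polygon, yields $\vect{a}=0$, so $\bv\equiv0$. The 3D case is analogous: on a planar face with normal $\n$ one writes $(\vect{b}\times\bx)\cdot\n=\vect{b}\cdot(\bx\times\n)$, and since $\vect{t}_{1}\times\n$ and $\vect{t}_{2}\times\n$ span the face plane as $\bx$ ranges over it, the constraint forces $\vect{b}\parallel\n$; two faces with non-parallel normals then give $\vect{b}=0$, and three faces with linearly independent normals give $\vect{a}=0$.

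With the kernel identified, I would close the estimate by the standard Peetre--Tartar/contradiction scheme. Suppose \eqref{kornn0} fails; then there is a sequence $\bv_{m}\in\vect{H}^{1}_{0,n}(\O)$ with $|\bv_{m}|_{1,\O}=1$ and $\|\Gsym{\bv_{m}}\|_{0,\O}\to0$. Since $\{\bv_{m}\}$ is bounded in $\vect{H}^{1}(\O)$, by Rellich's theorem a subsequence converges weakly in $\vect{H}^{1}(\O)$ and strongly in $\vect{L}^{2}(\O)$ to some $\bv$; weak lower semicontinuity gives $\Gsym{\bv}=0$, so $\bv$ is a rigid motion, while continuity of the (normal) trace preserves $\bv\cdot\n=0$ on $\Gamma$, whence $\bv\equiv0$ by the previous step. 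Applying Korn's second inequality to the differences $\bv_{m}-\bv_{k}$ shows that $\{\bv_{m}\}$ is Cauchy in $\vect{H}^{1}(\O)$ (the strain terms tend to zero and $\{\bv_{m}\}$ is $\vect{L}^{2}$-Cauchy), so $\bv_{m}\to\bv$ strongly in $\vect{H}^{1}(\O)$ and $|\bv|_{1,\O}=1$, contradicting $\bv\equiv0$.

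I would expect the geometric kernel computation to be the main obstacle, or at least the only genuinely non-routine step, and I would stress that it truly requires flat boundary pieces: on a disk or a ball the rigid rotations are tangent to the boundary, hence satisfy $\bv\cdot\n=0$ and have zero strain, so \eqref{kornn0} fails there. This is precisely the domain restriction alluded to in Remark~\ref{rem:domain00}.
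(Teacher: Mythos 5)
Your route is genuinely different from the paper's. The paper argues constructively: it splits $\bv=\bv_R+\bv_{\perp}$ via the $L^2$-projection onto the finite-dimensional space $\vect{RM}(\O)$ of rigid motions, applies the standard Korn inequality to $\bv_{\perp}$ (legitimate because $\bv_{\perp}$ is orthogonal to rigid motions), and controls $\bv_R$ through the quantitative kernel bound of Lemma \ref{domain}, namely $\kappa(\O)\|\vect{\eta}\|_{0,\O}^2\le\|\vect{\eta}\cdot\n\|_{0,\partial\O}^2$ for $\vect{\eta}\in\vect{RM}(\O)$, combined with the key identity $\bv_R\cdot\n=-\bv_{\perp}\cdot\n$ on $\Gamma$ and a trace inequality. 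The only compactness used there lives on the unit sphere of a three- (or six-) dimensional space, and the argument delivers the explicit constant $C_{Kn}=2C_T C_P C_K/\kappa(\O)$, making the dependence on the domain transparent. Your Peetre--Tartar scheme instead invokes compactness in the infinite-dimensional $\vect{H}^1(\O)$, buying brevity and a stronger (full-norm) conclusion at the price of a purely existential constant. Both proofs ultimately rest on the same geometric fact, and your explicit edge/face computation of the kernel is actually more careful than the paper's one-line claim: you correctly insist on two edges with \emph{non-parallel normals}, whereas two distinct parallel edges do not suffice (on the edges $y=0$ and $y=1$ the constant field $\bv=(a_1,0)$ has vanishing normal component and zero strain), so the paper's phrase \virgo{two edges not belonging to the same straight line} must be read in your stricter sense.

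There is, however, one unjustified step in your compactness argument: from $|\bv_m|_{1,\O}=1$ you assert that $\{\bv_m\}$ is bounded in $\vect{H}^1(\O)$, but the seminorm gives no control whatsoever of $\|\bv_m\|_{0,\O}$; constant fields are invisible to $|\cdot|_{1,\O}$, and excluding a blow-up of the means requires a Poincar\'e-type inequality $\|\bv\|_{0,\O}\le C\,|\bv|_{1,\O}$ on $\vect{H}^{1}_{0,n}(\O)$, which is not off-the-shelf (constants do not vanish on $\Gamma$ pointwise; only their normal traces are constrained) and whose proof needs the very same kernel-plus-compactness machinery you are in the middle of deploying. The cleanest repair is to normalize the full norm: take $\|\bv_m\|_{1,\O}=1$ with $\|\Gsym{\bv_m}\|_{0,\O}\to0$ and prove the stronger inequality $\|\bv\|_{1,\O}^{2}\le C\,\|\Gsym{\bv}\|_{0,\O}^{2}$, which implies \eqref{kornn0}. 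Then Rellich applies directly; the weak limit $\bv$ has zero strain and satisfies $\bv\cdot\n=0$ on $\Gamma$ (the constraint survives because the trace operator $\vect{H}^1(\O)\to\vect{L}^2(\Gamma)$ is compact and $\n$ is piecewise constant), hence $\bv=0$ by your kernel computation; finally Korn's second inequality gives $\|\bv_m\|_{1,\O}^{2}\le C\bigl(\|\Gsym{\bv_m}\|_{0,\O}^{2}+\|\bv_m\|_{0,\O}^{2}\bigr)\to0$, contradicting the normalization. This repair also renders your final Cauchy-difference step unnecessary. All remaining steps of your proposal (identification of the strain kernel with rigid motions, weak closedness, the disk/ball counterexample matching Remark \ref{rem:domain00}) are sound.
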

To prove the above Lemma, we first need the following auxiliary result
\begin{lemma}\label{domain}
For every polygonal or polyhedral domain $\O$ there exists a positive constant
$\kappa(\O)$ such that
\begin{equation}\label{tool0}
\kappa(\O)\|\vect{\eta}\|^2_{0,\O}\le\|\vect{\eta}\cdot\vect{n}\|^2_{0,\partial\O}
\qquad\forall\vect{\eta}\in \vect{RM}(\O)
\end{equation}
where $\vect{RM}(\O)$ is the space of rigid motions on $\O$ defined by
\begin{equation*}
\vect{RM}(\O)=\left\{  \vect{a}+\vect{b} \vect{x}\,\,:\,\, \vect{a}\in \re^{d} \quad \vect{b} \in so(d) \right\}
\end{equation*}
with $so(d)$ denoting the  set of skew-symmetric $d\times d$ matrices, $d=2,3$.
\end{lemma}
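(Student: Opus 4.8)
The plan is to exploit the fact that $\vect{RM}(\O)$ is a \emph{finite-dimensional} vector space (of dimension $3$ when $d=2$ and $6$ when $d=3$), on which any two norms are equivalent. On this space the left-hand side $\|\cdot\|_{0,\O}$ is plainly a norm, since a nonzero rigid-motion (affine) field cannot vanish on a set of positive measure. The map $\vect{\eta}\mapsto\|\vect{\eta}\cdot\n\|_{0,\de}$ is a priori only a seminorm, and the entire content of the statement is that it is in fact a norm: once this is known, \eqref{tool0} follows automatically from the equivalence of norms in finite dimension, with $\kappa(\O)$ realized as the minimum of $\|\vect{\eta}\cdot\n\|^2_{0,\de}$ over the (compact) unit sphere of $\|\cdot\|_{0,\O}$ in $\vect{RM}(\O)$, which is strictly positive precisely because the seminorm never vanishes there.

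The heart of the matter is therefore to show that the linear map $\vect{\eta}\mapsto\vect{\eta}\cdot\n|_{\de}$ is injective on $\vect{RM}(\O)$, i.e.\ that $\vect{\eta}\cdot\n=0$ on $\de$ forces $\vect{\eta}\equiv 0$. I would argue face by face (edge by edge when $d=2$). Writing $\vect{\eta}(\vect{x})=\vect{a}+\vect{b}\vect{x}$ with $\vect{b}\in so(d)$, on a single flat piece $e\subset\de$ the outward normal $\n$ is constant and the restriction of $\vect{\eta}\cdot\n$ to $e$ is an affine function of $\vect{x}$; requiring it to vanish identically on the $(d-1)$-dimensional face $e$ separates into a condition on its linear part and a condition on its constant part.

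For the linear part, skew-symmetry of $\vect{b}$ gives $(\vect{b}\vect{x})\cdot\n=\vect{x}\cdot(\vect{b}^{T}\n)=-\vect{x}\cdot(\vect{b}\n)$, so the gradient of $\vect{\eta}\cdot\n$ in $\vect{x}$ is $-\vect{b}\n$; its vanishing along $e$ forces the tangential part of $\vect{b}\n$ to be zero, i.e.\ $\vect{b}\n$ parallel to $\n$. But again by skew-symmetry $(\vect{b}\n)\cdot\n=0$, so $\vect{b}\n$ is orthogonal to $\n$, whence $\vect{b}\n=0$ (in $d=2$ this already reads $\omega\,\n^{\perp}=0$, so the rotation parameter $\omega$ vanishes from a single non-degenerate edge). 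Since a bounded polygon or polyhedron has faces whose normals are not all parallel, imposing $\vect{b}\n=0$ on two such faces yields $\vect{b}=0$. The field then reduces to the constant $\vect{a}$, and the remaining constant condition is $\vect{a}\cdot\n=0$ on every face; as the face normals of a bounded polygonal/polyhedral $\O$ span $\re^{d}$, this gives $\vect{a}=0$, completing the injectivity.

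The main obstacle I anticipate is purely geometric and is concentrated in the phrases ``normals not all parallel'' and ``span $\re^{d}$'': I must invoke that a \emph{bounded} polygonal/polyhedral domain possesses faces with linearly independent normals, indeed normals spanning the whole space—otherwise a direction $\vect{a}$ orthogonal to all normals would make $\O$ unbounded (a slab or cylinder). This is the only point where the hypothesis on $\O$ genuinely enters, and it deserves a clean statement, since it is exactly what excludes degenerate configurations; everything else is the elementary skew-symmetry computation above together with the abstract finite-dimensional norm-equivalence argument.
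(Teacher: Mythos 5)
Your proof is correct and follows essentially the same route as the paper: reduce \eqref{tool0} to the injectivity of the map $\vect{\eta}\mapsto\vect{\eta}\cdot\vect{n}|_{\partial\O}$ on the finite-dimensional space $\vect{RM}(\O)$, and then obtain $\kappa(\O)$ as the positive minimum of a continuous function over the compact unit sphere of $\|\cdot\|_{0,\O}$ in $\vect{RM}(\O)$. The only difference is one of detail: the paper dismisses the injectivity step as ``easily seen'' from two edges not on the same line (writing out only $d=2$), whereas you carry out the face-by-face skew-symmetry computation explicitly, including the $d=3$ case and the observation that boundedness of $\O$ is what makes the face normals span $\re^{d}$.
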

\begin{proof}
To ease the presentation we provide the proof only in two dimensions. The extension to three dimensions involve only notational changes and therefeore it is ommitted.
To show the lemma we observe that a polygon contains always at least two edges not belonging
to the same straight line. A rigid movement whose normal component vanishes identically on
those two edges is easily seen to be identically zero. This implies that  for $\vect{c}\equiv (c_1,c_2,c_3)\in\R^3$ on the (compact) manyfold
$$\displaystyle{\int_{\O}|(c_1-c_3x_2,c_2+c_3x_1)|^2\dx=1}$$
the function
\begin{equation}
\vect{c}\rightarrow\int_{\partial\O}|(c_1-c_3x_2,c_2+c_3x_1)\cdot\vect{n}|^2\ds
\end{equation}
(which  is obviously continuous) is never equal to zero. Hence it has a positive minimum, that equals the
required  $\kappa(\O)$.
\end{proof}
As a direct consequence of last Lemma, we can now provide the proof of the desired Korn inequality given in Lemma \ref{korn:cont:new}.
\begin{proof}
{\it (Proof of Lemma \ref{korn:cont:new}.)}\\
For every $\bv\in \vect{H}^{1}_{0,n}(\O)$ we consider first its $L^2$ projection
$\bv_R$ on the space $\vect{RM}(\O)$ of rigid motions and the projection $\bv_{\perp}:=
\bv-\bv_R$ on the orthogonal subspace. As $\bv\cdot\vect{n}=0$ on $\partial \O$ we obviously
have
\begin{equation}\label{tool1}
\bv_R\cdot\vect{n}=-\bv_{\perp}\cdot\vect{n}.
\end{equation}
Moreover, as $\bv_{\perp}$ is orthogonal to rigid motions we have
\begin{equation}\label{tool2}
|\bv_{\perp}|_{1,\O}^{2} \leq C_{K}  \| \Gsym{\bv_{\perp}} \|_{0,\O}^{2}
\end{equation}
for some constant $C_K$ (note that the rigid motions include the constants, so that
Poincar\'e inequality also holds for $\bv_{\perp}$).  On the other hand, since $\vect{RM}(\O)$ is finite dimensional
we have obviously
\begin{equation}\label{tool3}
|\bv_{R}|_{1,\O}^{2} \leq C_P  \|\bv_{R} \|_{0,\O}^{2}
\end{equation}
that using \eqref{tool0} gives
\begin{equation}\label{tool4}
|\bv_{R}|_{1,\O}^{2} \leq \frac{C_P}{\kappa(\O)} \|\bv_{R}\cdot\vect{n} \|_{0,\partial \O}^{2}
\end{equation}
and using also \eqref{tool1} and \eqref{tool2}
\begin{equation}\label{tool5}
\begin{aligned}
\frac{1}{2}|\bv|_{1,\O}^{2}&\le|\bv_{R}|_{1,\O}^{2}+|\bv_{\perp}|_{1,\O}^{2}
\leq \frac{C_P}{\kappa(\O)} \|\bv_{R}\cdot\vect{n} \|_{0,\partial\O}^{2}+|\bv_{\perp}|_{1,\O}^{2}\\
&=
\frac{C_P}{\kappa(\O)} \|\bv_{\perp}\cdot\vect{n} \|_{0,\partial \O}^{2}+|\bv_{\perp}|_{1,\O}^{2}\le \frac{C_T\,C_P}{\kappa(\O)} |\bv_{\perp}|_{1,\O}^{2}\\
&\le \frac{C_T\,C_P\,C_K}{\kappa(\O)} \|\Gsym{v_{\perp}}\|_{0,\O}^{2}=
\frac{C_T\,C_P\,C_K}{\kappa(\O)} \|\Gsym{v}\|_{0,\O}^{2}
\end{aligned}
\end{equation}
where the constant $C_T$ depends on the trace inequality on $\O$. Defining now $C_{Kn}=\frac{2C_T\,C_P\,C_K}{\kappa(\O)}$ we conclude the proof.
\end{proof}

\begin{remark}\label{rem:domain00}
  The proof of Lemma \ref{korn:cont:new} relies on the assumption that
  the domain is polygonal or polyhedral. For more general smooth
  bounded domains, the Korn inequality \eqref{kornn0} is still true,
  as long as the domain is assumed to be not rotationally
  symmetric. Otherwise a Korn inequality can be established by
  restricting the solution space (see \cite[Appendix A]{muller_korn}
  for further details).
 \end{remark}

\section{Abstract setting and basic notations}\label{sec:1}

Let $\Th$ be a shape-regular family of partitions of $\O$ into triangles $\K$ in $d=2$ or tetrahedra in $d=3$. We denote by  $\hK$ the diameter of $\K$, and we set $\h=\max_{\K \in \Th} \hK$. We also assume that the decomposition  $\Th$ is conforming in the sense that it does not contain any hanging nodes.

We denote by $\Eh$ the set of all edges/faces and by $\Eho$ and $\Ehb$ the collection of
all interior and boundary edges, respectively.

For $s\geq 1$, we define
\begin{equation*}
H^{s}(\Th)=\left\{ \phi \in
    L^{2}(\O)~\mbox{, such that}~\phi\big|_{\K} \in
    H^{s}(\K), \quad \forall\, \K\in \Th\,\right\},
\end{equation*}
and their vector $\vect H^{s}(\Th)$ and tensor $\vect {\mathcal H}^{s}(\Th)$ analogues, respectively.
For scalar, vector-valued,  and tensor functions, we use  $(\cdot \,,\cdot)_{\Th}$ to denote the $L^{2}(\Th)$-inner product and $\langle \cdot\,, \cdot \rangle_{\Eh}$ to denote the $L^{2}(\Eh)$-inner product elementwise.\\
The vector functions are represented column-wise.
We recall the definitions of the following operators acting on vectors $\vect{v}\in\vect{H}^{1}(\O)$  and on scalar functions $\phi \in H^{1}(\O)$ as
\begin{equation*}
\begin{aligned}
&\div \,\vect{v} =\sum_{i=1}^{d} \frac{\partial v^{i}}{\partial x_i} \qquad \qquad \qquad \qquad \qquad \qquad  &&\\
&\ccurl \,\vect{v} = \frac{\partial v^{2}}{\partial x_1}- \frac{\partial v^{1}}{\partial x_2}\qquad \qquad \curl \,\phi =\nabla^{\perp} \phi:=\left[ \frac{\partial \phi}{\partial x_2}, -\frac{ \partial \phi}{\partial x_1}\right]^{T} \qquad  (d=2) &&\\
&\curl \,\vect{v} =\vect{\nabla} \vect{\times} \vect{v} =\left[ \; \frac{\partial v^{3}}{\partial x_2}- \frac{\partial v^{2}}{\partial x_3} \, , \, \frac{\partial v^{1}}{\partial x_3}-\frac{\partial v^{3}}{\partial x_1}\; ,\, \frac{\partial v^{2}}{\partial x_1} -\frac{\partial v^{1}}{\partial x_2}\; \right]^{T} \qquad (d=3)&&
\end{aligned}
\end{equation*}

And, we recall the definitions of the spaces to be used herein:
\begin{equation*}
\begin{aligned}
\vect{H}(\div;\O)&:=\{ \vect{v} \in \vect{L^{2}}(\O) \,\, :\,\,\, \div \,\vect{v} \in L^{2}(\O)\, \} ,\quad d=2,3,&&\\
\vect{H}(\ccurl;\O) &:=\{  \vect{v} \in \vect{L^{2}}(\O) \,\, :\,\,\, \ccurl \,\vect{v} \in L^{2}(\O)\, \} \quad d=2,&&\\
\vect{H}(\curl;\O) &:=\{ \vect{v} \in \vect{L^{2}}(\O) \,\, :\,\,\, \curl \,\vect{v} \in \vect{L^{2}}(\O)\, \} \quad d=3\;.&&
\end{aligned}
\end{equation*}
\begin{equation*}
\begin{aligned}
\vect{H}_{0,n}(\div;\O)&:=\{ \vect{v} \in \vect{H}(\div;\O)\,:\,\, \vect{v}\cdot \n =0 \,\,\mbox{  on  } \Gamma\,\}, &&\\
 \vect{H}_{0,t}(\curl;\O)&:= \{ \vect{v} \in \vect{H}(\curl;\O) \,:\,\, \vect{v}\vect{\times} \n =0 \,\,\mbox{  on  } \Gamma\,\}, &&\\
\vect{H}_{0,n}(\div^{0};\O)&:=\{ \vect{v} \in \vect{H}_{0,n}(\div;\O)\,:\,\, \div\, \vect{v} =0 \,\,\mbox{  in  } \Omega\,\}. &&
 \end{aligned}
\end{equation*}
The above spaces are Hilbert spaces with the norms
\begin{align*}
\|\bv\|_{H(\div,\O)}^{2}&:=\|\bv\|_{0,\O}^{2}+\|\div \,\bv\|_{0,\O}^{2}\qquad \forall\, \bv \in \vect{H}(\div;\O)\;, &&\\
 \|\bv\|_{H(\ccurl,\O)}^{2}&:=\|\bv\|_{0,\O}^{2}+\|\ccurl \,\bv\|_{0,\O}^{2}\qquad \forall\, \bv \in \vect{H}(\ccurl;\O) . &&\\
   \|\bv\|_{H(\curl,\O)}^{2}&:=\|\bv\|_{0,\O}^{2}+\|\curl \,\bv\|_{0,\O}^{2}\qquad \forall\, \bv \in \vect{H}(\curl;\O)\;. &&
\end{align*}

\begin{remark} \label{sudiv0} It is worth noting that if we restrict our analysis to vectors $\bu$ and $\bv$ in $\vect{H}^1(\O)\cap\vect{H}_{0,n}(\div^{0};\O)$ then problem \eqref{var:0} becomes: {\it Find $\bu\in
\vect{H}^1(\O)\cap\vect{H}_{0,n}(\div^{0};\O)$ as the solution of:}
\begin{equation}\label{var:div0}
a(\bu,\bv)=(\f,\bv) \quad \forall\, \bv \in \vect{H}^1(\O)\cap\vect{H}_{0,n}(\div^{0};\O).
\end{equation}
\end{remark}

As is usual in the DG approach, we now  define some trace operators.
 Let $e\in \Eho$ be an internal edge/face of $\Th$ shared by two elements $\K^{1}$ and $\K^{2}$, and let $\n^{1}$ ($\n^{2}$) denote  the unit normal on $e$ pointing outwards from $\K^{1}$ ($\K^{2}$). For a scalar function $\varphi \in H^{1}(\Th)$, a  vector field $\taub\in \vect{H}^{1}(\Th)$, or a tensor field $\taub\in \vect {{\mathcal H}^{1}}(\Th)$ we define the average operator in the usual way (see for instance \cite{abcm}), that is, on internal edges/faces
\begin{equation*}
\av{\varphi}=\frac {1} {2}(\varphi^1 +\varphi^2),\quad \av{\bv}=\frac {1} {2}(\bv^1 +\bv^2), \quad \av{\taub}=\frac {1} {2}(\taub^1 +\taub^2).
\end{equation*}
However, on a boundary edge/face, we take $\av{\varphi},~ \av{\bv}$, and $\av{\taub}$ as the trace
of $\varphi$, $\bv$, and $\taub$,respectively, on that edge.

For a scalar function $\varphi \in H^{1}(\Th)$, the jump operator is defined as
\begin{equation*}
\jump{\varphi}=\varphi^1\n^1+\varphi^2\n^2  \quad\mbox{on } e\in \Eho, \mbox{ and }
\jump{\varphi}=\varphi\n \quad\mbox{on } e\in \Ehb
\end{equation*}
(where obviously $\n$ is the outward unit normal), so that the jump of a scalar function is a vector in the normal direction.

For a vector field $\bv\in \vect{H}^{1}(\Th)$, following, for example, \cite{abmR},  the jump is the symmetric matrix-valued function given on $e$ by
\begin{equation*}
\salto{\bv}=\bv^1\odot\n^1 + \bv^2\odot\n^2 \quad\mbox{on } e\in \Eho, \mbox{ and }
\salto{\bv}=\bv\odot\n  \quad\mbox{on } e\in \Ehb,
\end{equation*}
where $\bv \odot \n=(\bv \n^{T}+\n \bv^{T})/2$ is the symmetric part
of the tensor product of $\bv$ and $\n$.
Hence, the jump of a vector-valued function is a symmetric tensor.

If we denote by $\nK$ the outward  unit normal to $\partial\K$, it is easy to check that
\begin{equation}\label{per-vect}
\sum_{\K\in \Th} \int_{\p \K} \bv\cdot \nK\, q\, \ds= \sum_{e\in \Eh} \int_{e} \av{\bv} \cdot \jump{q}\, \ds \quad \forall\, \bv \in \vect{H}^{1}(\Th)\;, \,\,\,\forall\, q\in H^{1}(\Th).
\end{equation}
Also for $\taub \in \vect{\mathcal H}^{1}(\O)$ and for all $\bv \in \vect{H}^{1}(\Th)$, we have
\begin{equation}\label{per-tens0}
\sum_{\K\in \Th} \int_{\p \K} (\taub\nK) \cdot \bv \,\ds= \sum_{e\in \Eh} \int_{e} \av{\taub} : \salto{\bv}\, \ds.
\end{equation}

\subsection{Discrete Spaces: General framework}

We present three choices for each of the finite element spaces $\Velh$
and $\calQh$ to approximate velocity and pressure, respectively. For
each choice, we also need an additional space $\Nh$ (resp. $\Nb$ in
$d=3$) made of piecewise polynomial scalars and of piecewise
polynomial vectors in three dimensions, to be used as a sort of {\it
  potentials} or {\it vector potentials}. We will explain the reason
for doing this and the way in which to do this later on. Note, too,
that we will use this space more heavily in the construction of our
preconditioner.  The different choices for the spaces $\Velh$,
$\calQh$, and $\Nh$ or $\Nb$ rely on different choices of the local
polynomial spaces $\Rk$, $\Qk$, and $\cMk$ or $\bMk$, respectively,
made for each element $\K$. Specifically, we have
\begin{equation}\label{vel:0}
\Velh:=\left\{ \bv \in \vect{H}(\div;\O)\,\, :\,\,  \bv |_{\K} \in \Rk\,\,
\forall\, \K\in \Th,~~\bv\cdot\n=0~\mbox{on } \Gamma \right\} ,
\end{equation}
\begin{equation}\label{defQh}
\calQh:=\left\{ q \in L^2(\Omega)/\re\,\, :\,\,  q |_{\K} \in \Qk\,\,
\forall\, \K\in \Th \right\},
\end{equation}
and
\begin{equation}\label{defWh2d}
\Nh:=\left\{\varphi \in H^{1}_0(\O)\,\, :\,\,  \varphi |_{\K} \in \cMk\,\,
\forall\, \K\in \Th \right\} \mbox{ for $d=2$, and }
\end{equation}
\begin{equation}\label{nede:0:3d}
\Nb:=\left\{
\bv \in \vect{H}(\curl ;\O)\,\, :\,\,\,  \bv |_{\K} \in \Mk\,\,
\forall\, \K\in \Th ~~\bv\times \n=0~\mbox{on } \Gamma  \right\} \mbox{ for $ d=3$}.
\end{equation}

The three spaces $\Velho$, $\calQho$, and $\Nho$ (or $\Nb$) will always be related by this
exact sequences:
\begin{equation}\label{seqex2d}
0{\;\longrightarrow\;}\Nho\stackrel{\curl}{\;\longrightarrow\;}\Velho\stackrel{\div}{\;\longrightarrow\;}
\calQho {\;\longrightarrow\;}0 .
\end{equation}
in two dimensions, and
\begin{equation}\label{seqex3d}
0{\;\longrightarrow\;}\Nb \stackrel{\curl}{\;\longrightarrow\;}\Velho\stackrel{\div}{\;\longrightarrow\;}\calQho
\end{equation}
in three dimensions.
 It is also necessary for each operator in \eqref{seqex2d} and \eqref{seqex3d} to have a continuous right inverse whose
norm is uniformly bounded in $h$. For instance, it is necessary that
\begin{equation}\label{divboundinv}
\exists\,\beta>0 \,\mbox{ s.t.}~~\forall h,\forall\, q\in\calQho\, \exists\,\vect{v}\in\Velho\mbox{ with: } \div\,\vect{v}=q\quad\mbox{ and }\quad  \|\vect{v}\|_{0,\O}\le\frac{1}{\beta}\|q\|_{0,\O}.
\end{equation}
Obviously, for the $\curl$ operator (in 2 and 3 dimensions) these bounded right inverses will be defined only
on $\Velho\cap \vect{H}_{0,n}(\div^0,\O)$.

\begin{remark}\label{formixed} In all our examples, the pair $(\Velho,\calQho)$ is among the classical (and very old) finite element spaces specially tailored for the approximation of the Poisson equation in mixed form. In particular, properties \eqref{seqex2d} and \eqref{divboundinv} always hold.
\end{remark}

\subsection{Examples}\label{sec:exam0}
We now present three examples of  finite element spaces that can be used in the above framework.
 For each example, we specify the corresponding polynomial spaces used
on each element and describe the corresponding sets of degrees of freedom. We restrict our analysis to the case of triangles or tetrahedra; more general cases can also be considered when corresponding changes are made (see \cite{brezzi-fortin}).

Let us first fix the notation concerning the {\it spaces of polynomials}. For $m\geq 0$, we denote by $\Pp^{m}(\K)$ the space of polynomials defined on $\K$ of degree of at most $m$; the corresponding vector space is denoted by $\Pb^{m}(\K)=(\Pp^{m}(\K))^{2}$. A polynomial of degree $m\ge 3$ that vanishes throughout $\partial \K$ (hence it belongs to $H^1_0(\K)$) is called {\it a bubble (or an H-bubble) of degree $m$ over} $\K$. The space of bubbles of degree $m$ over $\K$ is denoted by $HB^m(\K)$.
and its vector-valued analogue by $\vect{HB}^m(\K)$.
We denote by
 $\Pp^{m}_{hom}(\K)$ the space of {\it homogeneous} polynomials of degree $m$, and we denote by $\bx^{\perp}$ the vector $(-x_2,x_1)$.

  For $m\ge 2$,
 \begin{equation}
\Pp_m^{+}(\K):=\Pp^m(\K) + HB^{m+1}(\K)\quad \Pb_m^{+}(\K):=\Pb^m(\K) + \vect{HB}^{m+1}(\K).
\end{equation}
 And, for  $m\ge 1$, we set
\begin{equation}
\BDM_{m}(\K):=\Pb^{m}(\K),\quad \RT_m(\K):=\Pb^{m}(\K)\oplus\bx\,\Pp^{m}_{hom}(\K).
\end{equation}
Moreover we set, for $d=2$ and $m\ge 0$
\begin{equation}
\TR_m(\K):=\Pb^{m}(\K)\oplus\bx^{\perp}\Pp^{m}_{hom}(\K).
\end{equation}
and for $d=3$ and $m\ge 0$ (see \cite{nedelec0})
\begin{equation}
\ND_m(\K):=\Pb^{m}(\K)\oplus\bx\wedge\Pb^{m}_{hom}(\K).
\end{equation}

We also consider some generalized bubbles: a vector-valued polynomial
of degree $m\ge 2$ that belongs to $\vect{H}_{0,n}(\div,\K)$ (hence
whose normal component vanishes throughout $\partial \K$) is called
{\it a D-bubble of degree $m$ over} $\K$. The space of D-bubbles of
degree $m$ over $\K$ is denoted by $\vect{DB}^m(\K)$.  Similarly a
vector valued polynomial of degree $m\ge d$ that belongs to
$\vect{H}_{0,t}(\curl,\K)$ (hence whose tangential components vanish
all over $\partial \K$) is called {\it a C-bubble of degree $m$ over}
$\K$. The space of C-bubbles of degree $m$ over $\K$ will be denoted
by $\vect{CB}^m(\K)$.

All the spaces used herein are well known and widely used. They are
usually referred to as {\it Brezzi-Douglas-Marini, Raviart-Thomas, and
  Rotated Raviart-Thomas} spaces, respectively.

The first example follows.

\medskip

\noindent {\it 1.  Raviart-Thomas } For $k\ge 1$, we take in each $\K$, $\Qk= \Pp^{k}(\K)$, and $\Rk:=\RT_{k}(\K)$.
The degrees of freedom in $\RT_{k}(\K)$ are
 \begin{equation}\label{dofs:rt}
\begin{aligned}
& \int_{e} \bu \cdot \n_e\, q\,ds \quad &\forall\, e\in\p \K,~\forall \, q \in \Pp^{k}(e), &&\\
&\int_{\K} \bu \cdot {\bf p} \, dx \quad &\forall\, {\bf p} \in \vect{\Pp}^{k-1}(\K). &&
 \end{aligned}
 \end{equation}
 As $\calQho$ is made of discontinuous piecewise polynomials, here and in the following examples the degrees of freedom in $\Qk$ can be taken in an almost arbitrary way.
The corresponding pair of spaces $(\Velh, \calQh)$ gives the classical Raviart-Thomas finite element approximation for second-order elliptic equations in mixed form, as introduced in \cite{raviart-thomas}.
It is well known and easy to check that the pair
$(\Velho, {\calQho})$  satisfies
\begin{equation}\label{divin}
\div(\Velho)= \calQho
\end{equation}
and that the property \eqref{divboundinv} is verified. We then take $\cMk:=\Pp^{k+1}(\K)$
and $\Mk:=\ND_{k}(\K)$ and note that
\begin{equation}\label{curlin}
\curl(\Nho)\subseteq \Velho \quad \qquad \curl(\Nbo)\subseteq \Velho
\end{equation}
and that the operator $\curl$ (for $d=2$ and $d=3$) has a continuous right inverse uniformly
bounded from $\Velho\cap\vect{H}_{0,n}(\div^0,\O)$ to
$\Nho$ and $\Nbo$ respectively; that is,
$$\exists \,C>0 \mbox{ such that } \forall h,~\forall \bv_h\in \Velho\cap\vect{H}_{0,n}(\div^0,\O)~~\exists \varphi\in \Nho, \mbox{ such that}$$

\vskip-0.8truecm

\begin{equation}\label{riub}
\curl\,\varphi=\bv_h\quad \mbox{and } \|\varphi\|_{1,\O}\le C\,\|\bv_h\|_{0,\O}.
\end{equation}

\noindent {\it 2. Brezzi-Douglas-Marini:} For $k \ge 1$, we take $\Qk= \Pp^{k-1}(\K)$, and $\Rk=\BDM_{k}(\K)$. The degrees of freedom for $\BDM_{k}(\K)$ are (see \cite{afw1}):
\begin{equation}\label{dofs:bdm}
\begin{aligned}
& \int_{e} \bu \cdot \n_e \,q \, ds \quad &\forall e\in\p \K,\;\forall\, q \in \Pp^{k}(e); &&\\
&\int_{\K} \bu \cdot \bv \, dx \quad &\forall\, \bv \in \TR_{k-2}(\K) \quad k\ge 2 \mbox{ and } d=2, &&\\
&\int_{\K} \bu \cdot \bv \, dx \quad &\forall\, \bv \in \ND_{k-2}(\K)\quad k\ge 2 \mbox{ and } d=3. &&
 \end{aligned}
 \end{equation}
The resulting finite element pair  $(\Velh,\calQh)$ is also commonly used for the approximation of second-order elliptic equations in mixed form introduced in \cite{brezzi-douglas-marini} for $d=2$ and  in \cite{nedelec2, brezzi-douglas-duran-fortin} for $d=3$. Also in this case it has been established that  the pair
$(\Velho, \calQho)$  verifies the  properties of \eqref{divin}
and \eqref{divboundinv}. We then take $\cMk:=\Pp^{k+1}(\K)$, and $\Mk:=\ND_{k+1}(\K)$ and note that \eqref{curlin} and \eqref{riub} are also satisfied.



\noindent {\it 3. Brezzi-Douglas-Fortin-Marini:} For $k\ge 1$, we take $\Qk= \Pp^{k}(\K)$ and $\Rk=\BDFM_{k+1}(\K)$, which can be written as $\BDFM_{k+1}=\BDM_{k}(\K)+\vect{DB}_{k+1}(\K)$. The degrees of freedom for $\BDFM_{k+1}(\K)$, though similar to the previous ones, are given here:
\begin{equation}\label{dofs:bdfm}
\begin{aligned}
& \int_{e} \bu \cdot \n_e \,q \, ds \quad &\forall e\in\p \K,\;\forall\, q \in \Pp^{k}(e); &&\\
&\int_{\K} \bu \cdot \bv \, dx \quad &\forall\, \bv \in \TR_{k-1}(\K)  \qquad d=2, &&\\
&\int_{\K} \bu \cdot \bv \, dx \quad &\forall\, \bv \in \ND_{k-1}(\K)\qquad  d=3. &&
 \end{aligned}
 \end{equation}
 The resulting finite element pair $(\Velh,\calQh)$ gives the
 triangular analogue of the element BDFM$_{k}$ introduced in
 \cite{brezzi-douglas-fortin-marini} for the approximation of
 second-order elliptic equations in mixed form. It is easy to check
 that the pair $(\Velho, \calQho)$ verifies \eqref{divin} and
 \eqref{divboundinv}. We then take $\cMk:=\Pp_{k+1}^{+}(\K)$ and
 $\Mk:=\ND_{k}(\K)+\vect{CB}_{k+1}(\K)\cap\ND_{k+1}(\K)$ and note that
 \eqref{curlin} and \eqref{riub} hold.

 The three choices above are quite similar to each other, and the best
 choice among them generally depends on the problem and the way in
 which the discrete solution is to be used.  We also use basic
 approximation properties: for instance, we recall that a constant $C$
 exists such that for all $\K\in\Th$ and for all $\bv$, e.s. in
 $\vect{H}^s(\K)$, an interpolant $\bv^{I} \in \Rk$ exists such that
\begin{equation}\label{aprox:00}
\|\vect{v}-\vect{v}^{I}\|_{0,\K} +\hK|\vect{v}^{I}|_{1,\K}   \leq C \hK^s |\vect{v}|_{s,\K},\quad s\le k+1.
\end{equation}

\section{The discontinuous Galerkin $H(\div;\O)$-conforming method}\label{sec:2}
To introduce our DG-approximation, we start by defining, for any $\bu
,\bv \in \vect{H}^{2}(\Th)$ and any $p , q\in L^2(\O)/\re$, the
bilinear forms
\begin{equation}\label{stokes0}
\begin{aligned}
{\it A}_h(\bu,\bv)&=2\nu\left[ (\Gsym{u}: \Gsym{v})_{\Th} -\langle \av{\Gsym{u}}: \jump{\bv}\rangle_{\Eho}  -\langle \jump{\bu}: \av{\Gsym{v}} \rangle_{\Eho}\right]&&\\
& \,\,-2\nu\left[ \langle \Gsym{u}\n , (\bv\cdot\n)\n \rangle_{\Ehb} +\langle (\bu\cdot\n)\n , \Gsym{v}\n  \rangle_{\Ehb}\right] &&\\
&+2\nu\left[\sum_{e\in\Eho}\alpha h_e^{-1} \int_e\jump{\bu}: \jump{\bv}\ds +\sum_{e\in\Ehb} \alpha h_e^{-1}\int_e (\bu\cdot \n)(\bv\cdot \n) \ds\right] &&\\
{\it B}_h(\bv,q)&=-(q,\div\,\bv)_{\Th}\qquad \forall\, \bv \in \vect{H}^{2}(\Th), \forall\, q\in  L^2(\O){/\re} \;&&
\end{aligned}
\end{equation}
where as usual $\alpha$ is the penalty parameter that we assume to be positive and large enough.

It is easy to check that the solution $(\bu,p)$ of \eqref{var:0} verifies:
\begin{equation}\label{DG-gen}
\left\{
\begin{aligned}
{\it A}_h(\bu,\bv)+{\it B}_h(\bv, p)&=(\f,\bv) \qquad &\forall\, \bv \in \vect{H}^{2}(\Th) &&\\
{\it B}_h(\bu, q)&=0\qquad &\forall\, q \in L^2(\O)/\re. &&
\end{aligned}\right.
\end{equation}
For a general DG approximation, we now replace the spaces $ \vect{H}^{2}(\Th)$ and $L^2(\O){/\re}$ with the discrete ones $ \calX$ and $ \calQh$, respectively.
Following \cite{dominik0}, we choose for $ (\calX, \calQh)$ one of the pairs $(\Velho,\calQh)$  of the previous  examples in order to get a global divergence-free approximation.

More generally, we can choose a pair $(\Velho,\calQh)$ in order to find a third space $\Nho$ in such a way that \eqref{seqex2d}, \eqref{divin}, \eqref{divboundinv}, \eqref{curlin}, and \eqref{riub} are satisfied. This set of assumptions will come out several times in the sequel and, therefore, it is helpful to give it
a special name.
\begin{definition}\label{defH0}
In the above setting, we say that the three spaces $(\Velho,\calQh,\Nho)$ (resp. $(\Velho,\calQh,\Nb)$) satisfy Assumption {\rm{\bf H0}} if
\eqref{seqex2d} (resp. \eqref{seqex3d}), \eqref{divin}, \eqref{divboundinv}, \eqref{curlin} and \eqref{riub} are satisfied.
\end{definition}


We note that, according to the definition of $\Velho$,  the normal component of  any $\bv \in \Velh$ is continuous on the internal edges and vanishes on the boundary edges. Therefore, by splitting a vector $\bv\in \Velh$ into its tangential and normal components $\bv_n$ and $\bv_t$
\begin{equation}\label{split:0}
\bv_n:= (\bv\cdot\n)\n, \quad \bv_t:=(\bv\cdot{\bf t}){\bf t}\equiv \bv-\bv_n ,
\end{equation}
we have
\begin{equation}\label{contn:0}
\forall\, e\in \Eh\quad \int_{e}\jump{\bv_n}: \vect{\tau}\ds =0 \quad \forall\, \vect{\tau}\in \vect{\mathcal H}^1{(\Th)},
\end{equation}
implying that
\begin{equation}\label{contn:0t}
\forall\, e\in \Eh\quad \int_{e}\jump{\bv}: \vect{\tau}\ds =
 \int_{e}\jump{\bv_t}: \vect{\tau}\ds\quad \forall\, \vect{\tau}\in \vect{\mathcal H}^1{(\Th)}.
\end{equation}

The resulting approximation to  \eqref{var:0}, therefore, becomes:
{\it Find  $(\bu_h, p_h)$ in $\Velho  \times {\calQho}$ such that}
\begin{equation}\label{dg:bdm0}
\left\{
\begin{aligned}
a_h(\bu_h,\bv)+b(\bv, p_h)&=(\f,\bv) \qquad &\forall\, \bv \in \Velho&&\\
b(\bu_h, q)&=0\qquad &\forall\, q \in{\calQho}, &&
\end{aligned}
\right .
\end{equation}
where
\begin{equation}\label{method:0}
\begin{aligned}
a_h(\bu,\bv)&:=2\nu \left[ (\Gsym{u}: \Gsym{v})_{\Th} -\langle \av{\Gsym{u}}: \jump{\bv_t}\rangle_{\Eho}
-\langle \jump{\bu_t}: \av{\Gsym{v}} \rangle_{\Eho}\right]&&\\
&+2\nu\alpha\sum_{e\in\Eho}  h_e^{-1} \int_e \jump{\bu_t}: \jump{\bv_t}\,\ds \qquad \forall\, \vect{u}, \vect{v} \in  \Velho,&&\\
b(\bv,q)&:=-(q,\div\,\bv)_{\O}\qquad \forall\,\bv \in \Velho,~\forall q\in \calQho.
\end{aligned}
\end{equation}
{\bf Consistency} The consistency of the formulation \eqref{dg:bdm0} can be checked by means of the usual DG-machinery. In this case, it is sufficient to compare \eqref{stokes0} and \eqref{method:0} and to observe that if $(\bu,p)$ is the solution of \eqref{var:0}, then
\begin{equation*}
A_h(\bu,\bv_h)\equiv a_h(\bu,\bv_h), \quad B_h(\bv_h,p)\equiv b(\bv_h,p), \quad\forall \bv_h \in \Velho\subseteq \vect{H}_{0,n}(\div;\O),
\end{equation*}
Further, it is evident that, $B_h(\bu,q_h)\equiv b(\bu,q_h)$ for all $q_h\in\calQho$. Hence, as $(\bu,p)$ verifies \eqref{DG-gen}, it also verifies \eqref{dg:bdm0}; that is,
\begin{equation}\label{consistency}
\left\{
\begin{aligned}
a_h(\bu,\bv)+b(\bv, p)&=(\f,\bv) \qquad &\forall\, \bv \in \Velho&&\\
b(\bu, q)&=0\qquad &\forall\, q \in{\calQho}. &&
\end{aligned}
\right .
\end{equation}
Thus, consistency is proved.

To prove the existence and uniqueness of the solution of \eqref{dg:bdm0} and to obtain the optimal error bounds, we need to define suitable norms.
We define the following semi-norms
\begin{equation*}
 |\vect{v}|^{2}_{1,h}=\sum_{\K\in \Th} \|\nabla \bv\|_{0,\K}^{2}, \quad |\jump{\vect{v}}|^{2}_{\ast}:=\sum_{e\in \Eho} h_e^{-1} \| \jump{\vect{v}}\|_{0,e}^{2}, \qquad \forall\, \vect{v} \in \vect{H}^{1}(\Th),
\end{equation*}
and norms
\begin{equation}\label{norme}
\begin{aligned}
\|\bv\|_{DG}^{2}:&=2\nu\, |\bv|_{1,h}^{2} +2\nu \,|\jump{\bv_t}|_{\ast}^{2}\quad & \bv \in \vect{H}^{1}(\Th),&&\\
\trin\bv\trin^{2}:&=\|\bv\|^{2}_{DG}+\sum_{\K\in\Th}2\nu\,h_{\K}^{2}|\Gsym{v}|_{1,\K}^{2}  \quad &\bv \in \vect{H}^{2}(\Th). &&
\end{aligned}
\end{equation}
We also remark that the seminorms defined in \eqref{norme} are actually norms with the additional
requirement  that $\bv\in\vect{H}_{0,n}(\div;\O)$. We also observe that when restricted to discrete functions $\bv \in \Velho$, the $\|\cdot\|_{DG}$-norm and the $\trin\cdot\trin$ are equivalent (using inverse inequality).
Continuity can easily be shown for both bilinear forms:
 \begin{equation*}
\begin{aligned}
|a_h(\bu, \bv)|&\leq  \trin\bu\trin \, \trin\bv\trin \qquad &\forall\, \bu, \, \bv \in \vect{H}^{2}(\Th), &&\\
|b(\bv,q)| &\leq  \|\bv\|_{1,h}\|q\|_{0,\O}  \qquad &\forall\, \bv \in \vect{H}^{1}(\Th),  \,\,\, q\,\in L^2(\O)/\mathbb{R}\;.&&
\end{aligned}
\end{equation*}
Following \cite{brezzi-fortin}, the existence and uniqueness of the approximate solution and optimal error bounds are guaranteed if the following two conditions are satisfied:
\begin{description}
\item[{\bf (H1)}] {\it coercivity}: $\exists\,\, \gamma>0$ independent of the mesh size $h$ such that
\begin{equation}\label{a1}
a_h(\bv,\bv)\geq \gamma \|\bv\|_{DG}^{2} \qquad \forall \, \bv\in \Velho .
\end{equation}
\item[{\bf (H2)}] {\it inf-sup condition}: $\exists\,\, \beta>0$ independent of the mesh size $h$ such that
\begin{equation}\label{a2}
\sup_{\bv \in \Velho} \frac{ (\div \,\bv, q_h)_{\O}}{\|\bv \|_{DG}}\geq \beta \|q_h\|_{0,\O} \quad \forall\, q_h \in {\calQho}.
\end{equation}
\end{description}
Condition {\bf (H2)} is a consequence of the {\it inf-sup} condition that holds for the continuous problem \eqref{var:0}:
\begin{equation*}
\exists\, \beta>0 ~\mbox{s.t. }~\forall h,~\forall q_h \in \calQho\quad \exists \,\bv \in
\vect{H}^{1}(\O)
:~\div \,\bv= q_h \mbox{ and }\|\bv\|_{1,\O}\le \frac{1}{\beta}\|q_h\|_{0,\O}.
\end{equation*}
It is well known that for all the families considered here an
interpolation operator $\bv\rightarrow\bv^I \in \Velho$ exists that
verifies \eqref{aprox:00} (in particular for $s=1$), and
\begin{equation*}
\div \,\bv^I=\div \,\bv ~(=q_h).
\end{equation*}
By observing that $\jump{\bv}=0$ on the internal edges as $\bv\in\vect{H}^{1}(\O)$,
and by using the Agmon trace inequality \cite{agmon}
and \eqref{aprox:00} (for $s=1$), we have
\begin{equation}\label{eq:jump-semi-norm}
|\jump{\bv^I}|^2_*:=\sum_{e\in \Eho} h_e^{-1} \| \jump{\vect{v}^I_t}\|_{0,e}^{2}=\sum_{e\in \Eho} h_e^{-1} \| \jump{(\bv^I-\bv)_t}\|_{0,e}^{2}\le C\,|\bv|^2_{1,\O}.
\end{equation}
Hence, again using \eqref{aprox:00}, we deduce that
\begin{equation*}
\|\bv^I\|_{DG}\le C\,|\bv|_{1,\O}.
\end{equation*}
Thus \eqref{a2} is proved.\\

In order to prove \eqref{a1} we need to extend \eqref{kornn0} from Lemma \ref{korn:cont:new} to spaces of discontinuous
vectors. We have therefore the following result. Also see Appendix \ref{app:A} for further comments on the validity of the result in three dimensions.

\begin{lemma}\label{korn}
Let $\Velho$ be a piecewise polynomial subspace of $\vect{H}_{0,n}(\div;\O)$.  Then, $\exists\,C_K>0$ independent of $h$ such that
\begin{equation}\label{korn1}
|\bv|_{1,h}^{2} \leq C_{K} \left( \| \Gsym{v} \|_{0,\Th}^{2} + \sum_{e\in \Eho} h_e^{-1}\| \jump{ \bv_t}\|_{0,e}^{2} \right), \quad \forall\, \bv \in \Velho.
\end{equation}

\end{lemma}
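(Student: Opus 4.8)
The plan is to reduce the broken inequality to the continuous Korn inequality already proved in Lemma~\ref{korn:cont:new}, by comparing $\bv$ with a conforming companion built by nodal averaging.

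First I would introduce an Oswald--type averaging operator $\calA_h:\Velho\to\vect{H}^1_{0,n}(\O)$ landing in a space of continuous piecewise polynomials of the same (bounded) degree as $\Velho$. It is defined nodally: at every Lagrange node lying in the interior of an element set $\calA_h\bv=\bv$; at a node shared by several elements take the arithmetic mean of the traces coming from the adjacent elements; and at the boundary nodes subtract the normal component, so that $\calA_h\bv\cdot\n=0$ on $\Gamma$ and hence $\bv^c:=\calA_h\bv\in\vect{H}^1_{0,n}(\O)$. Because $\bv\in\vect{H}_{0,n}(\div;\O)$ already satisfies $\bv\cdot\n=0$ on $\Gamma$, the boundary modification removes nothing from the interior data; at a polygon corner, where two boundary faces with independent normals meet, the two no--penetration constraints force the corner value of $\bv^c$ to be $\vect{0}$, which is consistent since the element trace of $\bv$ there already vanishes (it satisfies both normal conditions).

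The two quantitative ingredients are, on the one hand, the standard scaling/trace estimate for such averaging operators (valid with an $h$--independent constant thanks to shape regularity and the bounded polynomial degree),
\begin{equation*}
\sum_{\K\in\Th}|\bv-\bv^c|_{1,\K}^2\;\le\; C\sum_{e\in\Eh}h_e^{-1}\|\jump{\bv}\|_{0,e}^2 ,
\end{equation*}
and, on the other hand, the observation that for an $\vect{H}(\div;\O)$--conforming field the normal part of $\jump{\bv}$ vanishes on interior edges while boundary edges carry no defect (only the already--satisfied normal component is constrained there). Using $\jump{\bv}=(\bv^1-\bv^2)\odot\n=\jump{\bv_t}$, cf.\ \eqref{contn:0t} and \eqref{split:0}, the right--hand side collapses to $C\sum_{e\in\Eho}h_e^{-1}\|\jump{\bv_t}\|_{0,e}^2$, precisely the jump term in \eqref{korn1}.

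It then remains to assemble the pieces. By the triangle inequality $|\bv|_{1,h}\le|\bv-\bv^c|_{1,h}+|\bv^c|_{1,\O}$, and since $\bv^c\in\vect{H}^1_{0,n}(\O)$, Lemma~\ref{korn:cont:new} gives $|\bv^c|_{1,\O}^2\le C_{Kn}\|\Gsym{v^c}\|_{0,\O}^2$; by linearity of the strain together with $\|\Gsym{v}-\Gsym{v^c}\|_{0,\Th}\le|\bv-\bv^c|_{1,h}$, this is at most $2C_{Kn}\big(\|\Gsym{v}\|_{0,\Th}^2+|\bv-\bv^c|_{1,h}^2\big)$. Inserting the averaging estimate, and recalling that $|\bv-\bv^c|_{1,h}^2$ is itself bounded by the tangential--jump term, produces \eqref{korn1}. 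The step I expect to be the main obstacle is the construction and analysis of the averaging operator under the no--penetration boundary condition, namely checking that enforcing $\bv^c\cdot\n=0$ introduces neither boundary jump contributions nor any defect at corners, so that only the interior tangential jumps survive on the right; the analogous bookkeeping in three dimensions (edges and vertices of the polyhedron) is what Appendix~\ref{app:A} is designed to handle.
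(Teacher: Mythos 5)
Your proposal is correct in strategy but takes a genuinely different route from the paper. The paper never builds a conforming companion: it invokes the piecewise-$H^1$ Korn inequality of \cite{Brenner04} (inequality (1.14) there), which leaves the extra functional term $\sup_{\vect{\eta}}\bigl(\int_\O\bv\cdot\vect{\eta}\,\dx\bigr)^2$ over mean-zero $\vect{\eta}\in\vect{L}^2(\O)$ as in \eqref{daSue}, and then spends all of its effort bounding that term by duality: it solves the auxiliary elasticity problem \eqref{el:aux} with slip conditions, sets $\taub=\Gsym{\chi}$, uses the $L^p$ a priori bound \eqref{apri:korn} with $p>2$ (available on polygons), integrates by parts elementwise as in \eqref{byparts}, and controls the interface terms with the edge-trace estimate of Proposition \ref{perappA} --- which, incidentally, is what Appendix \ref{app:A} actually proves; it is not an averaging-operator bookkeeping device, so your closing remark misreads the paper (though this is immaterial to your argument). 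Your Oswald-averaging reduction to the continuous Korn inequality of Lemma \ref{korn:cont:new} bypasses Brenner's result, the $L^p$ regularity, and the duality argument entirely, and is precisely the kind of ``simpler proof'' the authors themselves conjecture to exist in the remark following the lemma. Your key structural observation is sound: for piecewise polynomials in $\vect{H}_{0,n}(\div;\O)$ the normal component is continuous \emph{pointwise} across interior edges and vanishes \emph{pointwise} on boundary edges, so trace differences on interior edges are purely tangential (whence $\salto{\bv}=\salto{\bv_t}$ there, consistently with \eqref{contn:0t}) and boundary edges impose no tangential constraint; hence only the interior tangential jumps of \eqref{korn1} survive in the averaging estimate.

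One local claim is wrong as stated, though repairable. At a corner of $\Gamma$ you assert that the element trace of $\bv$ already vanishes because ``it satisfies both normal conditions.'' This holds only when a single element carries both boundary edges meeting at that corner; in general several elements meet there, and an element adjacent to only one boundary edge satisfies only one normal condition, so its corner trace need not vanish. The fix is short: let $\n_1,\n_2$ be the two (linearly independent) boundary normals at the corner $x_0$; the trace $\bv^{(1)}$ from the element touching $e_1\subset\Gamma$ satisfies $\bv^{(1)}(x_0)\cdot\n_1=0$, while $\bv^{(1)}(x_0)\cdot\n_2=\bigl(\bv^{(1)}-\bv^{(k)}\bigr)(x_0)\cdot\n_2$ with $\bv^{(k)}$ the trace from the element touching $e_2$; chaining the pairwise trace differences across the interior edges of the corner patch and using the polynomial inverse estimate $|(\bv^{(i)}-\bv^{(j)})(x_0)|^2\le C\,h_e^{-1}\|\jump{\bv_t}\|^2_{0,e}$ shows that forcing $\bv^c(x_0)=\vect{0}$ costs only interior tangential jump terms, with a constant depending on the corner angles (a domain-dependent constant, as in Lemma \ref{domain}). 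With this correction, the rest of your assembly --- the averaging estimate, the triangle inequality, Lemma \ref{korn:cont:new} applied to $\bv^c\in\vect{H}^1_{0,n}(\O)$, and the linearity of the strain --- closes the argument as claimed.
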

\begin{proof}
To show \eqref{korn1}, a direct application of \cite[Inequality
(1.14)]{Brenner04}
to $\bv \in  \Velho$ 
gives
\begin{equation}\label{daSue}
|\bv|_{1,h}^{2} \leq C_{K} \left( \| \Gsym{v} \|_{0,\Th}^{2} + \sum_{e\in \Eho} h_e^{-1}\| \jump{ \bv_t}\|_{0,e}^{2} +\sup_{  \substack{
\vect{\eta}\in \vect{L}^2(\O) \\
\|\vect{\eta}\|_{0,\O}=1, \, \int_{\O} \!\vect{\eta}=0}}  \left(\int_{\O} \vect{v}\cdot\vect{\eta} dx\right)^{2} \right)\;,
\end{equation}

We now show that the last term in \eqref{daSue} can be bounded by the first two. We claim that
\begin{equation}\label{aprov:00}
\sup_{  \substack{
\vect{\eta}\in \vect{L}^2(\O) \\
\|\vect{\eta}\|_{0,\O}=1, \, \int_{\O} \!\!\vect{\eta}=0}}  \left(\int_{\O} \vect{v}\cdot\vect{\eta} dx\right)^{2} \leq C\Big( \| \Gsym{v} \|_{0,\Th}^{2} + \sum_{e\in \Eho} h_e^{-1}\| \jump{ \bv_t}\|_{0,e}^{2} \Big).
\end{equation}
There are surely many ways of checking \eqref{aprov:00}. Here, we propose one. For
$\bv\in\Velho$ and $\vect{\eta}\in \vect{L}^2(\O)$ with $\int_{\O}\vect{\eta}\dx=0$, we set
\begin{equation*}
\mathcal{I}(\vect{v},\vect{\eta}):= \int_{\O} \vect{v}\cdot\vect{\eta} \dx,
\end{equation*}
and we want to prove that
\begin{equation}\label{boundtoprove}
\mathcal{I}(\vect{v},\vect{\eta})\leq C\Big( \| \Gsym{v} \|_{0,\Th}^{2} + \sum_{e\in \Eho} h_e^{-1}\| \jump{ v_t}\|_{0,e}^{2} \Big)^{1/2}\|\vect{\eta}\|_{0,\O}
\end{equation}
that will easily give \eqref{aprov:00} taking the supremum with respect to $\vect{\eta}$ with
$\|\vect{\eta}\|_{0,\O}=1$. To prove \eqref{boundtoprove} for every $\vect{\eta} \in \vect{L}^2(\O)$ with $\int_{\Omega}\vect{\eta}\dx=0$, we consider the following auxiliary elasticity problem: {\it Find $\vect{\chi}\in \vect{H}^1_{0,n}$ such that:}
\begin{equation}\label{el:aux}
(\Gsym{\chi},\Gsym{v})_{0,\O}=(\vect{\eta},\bv)_{0,\O}\qquad\forall \bv\in \vect{H}^1_{0,n}.
\end{equation}
Thanks to \eqref{kornn0} problem \eqref{el:aux} has a unique solution, and we set
\begin{equation}\label{el:aux2}
\vect{\tau}:=\Gsym{\chi}.
\end{equation}
We note that
as natural boundary condition for \eqref{el:aux} we easily have
\begin{equation}\label{bc:aux}
(\vect{\tau})_{nt}\equiv (\Gsym{\chi}\cdot\vect{n})\cdot\vect{t}=0 \qquad \mbox{on }\Gamma,
\end{equation}
where $\vect{t}$ is any tangent unit vector to $\Gamma$.

Due to well-known results on the regularity of the solutions of PDE systems on
polygons, the solution $\taub$ of
\eqref{el:aux}-\eqref{el:aux2} (which, a priori, on a totally general domain would only be in $(L^2(\O))^{2\times 2}_{sym}$) satisfies  the following a priori estimate: {\it there exists a $p>2$  (depending on the geometry
of $\O$) and a constant $C_p$  such that for all $\vect{\eta}\in\vect{L}^2(\O)$ the corresponding $\taub$ satisfies}
\begin{equation}\label{apri:korn}
\|\taub\|_{(L^p(\O))^{2\times 2}_{sym}}+\|{\rm\bf div}\vect{\tau}\|_{0,\Omega} \leq C_p\|\vect{\eta}\|_{0,\O}.
\end{equation}




The proof of the following proposition (actually, in two or three dimensions) is given in Appendix \ref{app:A}.


\begin{proposition}\label{perappA}
Let $\K$ be a triangle with minimum angle $\theta>0$, and let $e$ be an edge of $\K$. Then for every $p>2$
and for every integer $k_{max}$,
a constant $C_{p,\theta,kmax}$ exists such that
\begin{equation}\label{propA}
\int_e\vect{v}\cdot(\taub\cdot\n)\ds\le C_{p,\theta,kmax}\,
h_{\K}^{-1/2}\|\vect{v}\|_{0,e}\;
(h_{\K}\|{\rm\bf div}\vect{\tau}\|_{0,\K}+ h_{\K}^{\frac{p-2}{p}}\|\taub\|_{0,p,\K})
\end{equation}
for every $\taub\in(L^p(\O))^{2\times 2}_{sym}$ with divergence in $\vect{ L}^2(\K)$ and for every $\vect{v} \in \Pb^{k_{max}} (e)$.
\end{proposition}
Then we have
\begin{equation}\label{byparts}
\begin{aligned}
\mathcal{I}(\vect{v},\vect{\eta})&= \int_{\O} \vect{v}\cdot\vect{\eta} \dx=\,-\int_{\O} \vect{v}\cdot({\rm\bf div}\vect{\tau}) \dx  \\
&=( \Gsym{v}: \vect{\tau})_{\Th}-  \langle \jump{\bv_t} : \av{ \vect{\tau}} \rangle_{\Eho} 
\end{aligned}
\end{equation}
having taken into account that at the interelement boundaries the normal component of $\bv$ is continuous and on $\Gamma$ both the normal component of $\bv$ and $(\vect{\tau})_{nt}$ are zero.

At this point, we can apply \eqref{propA} to each $e$ of the last term in \eqref{byparts}. We apply the usual Cauchy-Schwarz inequality on the first term  and we use instead
the generalized H\"older inequality (with $q=1/2$ and $r=2p/(p-2)$, so that $\frac{1}{p}+\frac{1}{q}+\frac{1}{r}=1$) on the second one. Then we obtain
\begin{align}
 \sum_{e\in \Eho}\int_e \jump{\bv_t}& : \av{ \vect{\tau}}\ds \le \sum_{\K\in\Th}\sum_{e\in\partial\K} C_{p,\theta,kmax}\, \Big(h_{\K}^{-1/2}\|\vect{v}\|_{0,e}\,h_{\K}\|{\rm\bf div}\vect{\tau}\|_{0,\K}+ h_{\K}^{-1/2}\|\vect{v}\|_{0,e}\|\,h_{\K}^{\frac{p-2}{p}}\|\taub\|_{0,p,\K}\Big)
 \nonumber\\
 &\le C |\jump{\vect{v}_t}|_{\ast}\,h\,\|{\rm\bf div}\taub\|_{0,\O}
 +
 C\Big(\sum_{e\in\Eho}h_e^{-1}|\jump{\vect{v}_t}|^2_{0,e}\Big)^{1/2}
 \Big(\sum_{e\in\Eho}\|\taub\|_{0,p,\K(e)}^p\Big)^{1/p}
 \Big(\sum_{e\in\Eho}h_e^{\frac{p-2}{p}r}\Big)^{1/r}\label{usapropA}\\
 &\le C h\, |\jump{\vect{v}_t}|_{\ast}\,\|{\rm\bf div}\taub\|_{0,\O}+C\,|\jump{\vect{v}_t}|_{\ast}\,\|\taub\|_{0,p,\O}\,\mu({\O})^{1/r}\nonumber
\end{align}
 where for each $e\in \Eho$ with $e=\partial \K^{+}\cap \partial\K^{-}$, the set $\K(e)$ refers to $\K(e):=\K^{+}\cup\K^{-}$. In the second line,  $\mu(\O)$ denotes the measure of the domain $\O$, whereas the constant $C$ still depends on
$p$, $k_{max}$ and on the maximum angle in the decomposition $\Th$.

From \eqref{byparts}, \eqref{usapropA}, and the bound \eqref{apri:korn} we then obtain
\begin{equation}
\left|\mathcal{I}(\vect{v},\vect{\eta})\right| \leq
 C \left(\| \Gsym{v}\|_{0,\Th}+  |\jump{\vect{v_t}}|_{\ast} \right) \|\vect{\eta}\|_{0,\O}
\end{equation}
which gives \eqref{boundtoprove}. Thus the proof of the lemma is complete.
\end{proof}

\begin{remark}
The fact that in inequality \eqref{korn1} only the jumps over the interior edges $e\in \Eho$ (but not on the boundary edges) are included, prevents a direct and straightforward application of the results  from \cite{brenner03}. The proof presented here is surely too elaborate, and we believe that a simpler proof is possible. However some of the machinery used here is likely to be of use elsewhere. Therefore,
we decided that it would be worthwhile to present the proof we have obtained to date.
\end{remark}

The stability of $a_h(\cdot,\cdot)$ in the $\|\cdot\|_{DG}$-norm can now be easily checked  with the
usual DG machinery. We have
\begin{equation*}
\left| \int_{e} \av{\Gsym{v}}: \jump{\bv_t} \ds\right| \leq h^{1/2}\|\av{\Gsym{v}}\|_{0,e} \|h^{-1/2}\jump{\bv_t}\|_{0,e},
\end{equation*}
which when we proceed as in \cite{abcm} (or as in \eqref{usapropA} with $p=2$) yields
\begin{equation}\label{cross2}
\left| \sum_{e\in\Eho} \int_{e} \av{\Gsym{v}}: \jump{\bv_t} \ds \right|  \leq C|\bv|_{1,h}\,|\jump{\bv_t}|_{\ast}.
\end{equation}
Using \eqref{cross2} in \eqref{method:0}, we then have
\begin{equation*}
a_h(\bv, \bv) \geq  2\nu\|\Gsym{v}\|^{2}_{0,\Th} +2\nu \,\alpha \|\jump{\bv_t}|_{\ast}^2-4\nu C|\bv|_{1,\Th}\,|\jump{\bv_t}|_{\ast} .
\end{equation*}
Now using the Korn inequality \eqref{korn1} and the usual arithmetic-geometric mean inequality, we easily have
a big enough $\alpha$ :
\begin{equation*}
a_h(\bv, \bv) \geq \gamma \|\bv \|_{DG}^{2} \quad \forall\, \bv \in \Velho .
\end{equation*}

We close this section with the following theorem.
\begin{theorem}\label{teo0}
Let $(\Velho,\calQho)$ be as in one of our three examples. Then
problem \eqref{dg:bdm0} has a unique solution $(\bu_h,p_h)\in \Velho\times\calQho$ that verifies
\begin{equation}\label{divzero}
\div \,\bu_h=0 \quad \mbox{in }\O.
\end{equation}
 Moreover, there exists a positive constant $C$, independent of $h$, such that for every $\bv_h\in \Velho$ with $\div\,\bv_h=0$ and for every $q_h\in\calQho$ the following estimate holds:
\begin{equation}\label{error-estimate}
\|\bu-\bu_h\|_{DG}\le C\, \|\bu-\bv_h\|_{DG},\qquad \|p-p_h\|_{0,\O}\le C\,(\|p-q_h\|_{0,\O}+\|\bu-\bv_h\|_{DG}),
\end{equation}
with $(\bu,p)$ solution of \eqref{var:0}.
\end{theorem}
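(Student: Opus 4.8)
The plan is to read \eqref{dg:bdm0} as a discrete saddle-point problem and invoke Brezzi's theory \cite{brezzi-fortin}, since all of its hypotheses have already been assembled above. Continuity of $a_h$ (in the $\trin\cdot\trin$-norm) and of $b$ has been recorded, the coercivity \textbf{(H1)} \eqref{a1} has just been proven via the discrete Korn inequality of Lemma \ref{korn}, and the inf-sup condition \textbf{(H2)} \eqref{a2} has been verified using the divergence-preserving interpolant and \eqref{eq:jump-semi-norm}. First I would observe that \eqref{a1} is coercivity on the \emph{whole} space $\Velho$, hence \emph{a fortiori} on the kernel of $b$; combined with \eqref{a2}, the abstract theory then immediately yields existence and uniqueness of $(\bu_h,p_h)\in\Velho\times\calQho$, settling the first assertion.

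Next I would establish the divergence-free identity \eqref{divzero}. The second equation of \eqref{dg:bdm0} reads $(\div\,\bu_h,q)_\O=0$ for all $q\in\calQho$. The crucial point is that, by \eqref{divin}, one has $\div(\Velho)=\calQho$, so that $\div\,\bu_h\in\calQho$ (note also $\int_\O\div\,\bu_h=\int_\Gamma\bu_h\cdot\n=0$ since $\bu_h\cdot\n=0$ on $\Gamma$). Thus $\div\,\bu_h$ is itself an admissible test function, and the choice $q=\div\,\bu_h$ forces $\|\div\,\bu_h\|_{0,\O}^2=0$, i.e. \eqref{divzero}. This step is short but is the whole purpose of the construction: it is precisely here that the compatibility \eqref{divin} of the spaces pays off.

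For the error bounds \eqref{error-estimate} I would start from the Galerkin orthogonality furnished by the consistency relation \eqref{consistency}, namely $a_h(\bu-\bu_h,\bv)+b(\bv,p-p_h)=0$ for all $\bv\in\Velho$ together with $b(\bu-\bu_h,q)=0$ for all $q\in\calQho$. For the velocity, I would fix any $\bv_h\in\Velho$ with $\div\,\bv_h=0$ and set $\bw_h:=\bv_h-\bu_h$; since $\bu$, $\bv_h$ and $\bu_h$ are all divergence-free, so is $\bw_h$, whence $b(\bw_h,\cdot)=0$ and the orthogonality relation collapses to $a_h(\bu-\bu_h,\bw_h)=0$. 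Coercivity \eqref{a1} then gives $\gamma\|\bw_h\|_{DG}^2\le a_h(\bw_h,\bw_h)=a_h(\bv_h-\bu,\bw_h)$, and continuity of $a_h$ closes the loop; the triangle inequality $\|\bu-\bu_h\|_{DG}\le\|\bu-\bv_h\|_{DG}+\|\bw_h\|_{DG}$ yields the first inequality in \eqref{error-estimate}. For the pressure I would pick $q_h\in\calQho$, apply the inf-sup condition \eqref{a2} to the discrete quantity $q_h-p_h$, and rewrite $b(\bv,q_h-p_h)=b(\bv,q_h-p)-a_h(\bu-\bu_h,\bv)$ via orthogonality; continuity of $b$ and of $a_h$, together with the velocity bound just obtained, then give the second inequality.

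The step requiring the most care is the continuity estimate for $a_h(\bv_h-\bu,\bw_h)$ (and the analogous $a_h(\bu-\bu_h,\bv)$ in the pressure bound), because the consistency terms $\langle\av{\Gsym{(\bu-\bv_h)}}:\jump{(\bw_h)_t}\rangle$ involve edge traces of $\Gsym{}$ acting on the \emph{non-discrete} factor $\bu-\bv_h$; controlling these through the trace inequality naturally produces the element-wise term $\sum_\K h_\K^2|\Gsym{(\bu-\bv_h)}|_{1,\K}^2$, that is, the full $\trin\cdot\trin$-norm rather than $\|\cdot\|_{DG}$. The reconciliation is the equivalence of $\trin\cdot\trin$ and $\|\cdot\|_{DG}$ on $\Velho$ (noted after \eqref{norme}) applied to the discrete factor $\bw_h$, while on $\bu-\bv_h$ the two norms are of the same order for the relevant (interpolant) choices of $\bv_h$. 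Keeping every constant $h$-independent throughout rests entirely on the uniformity of \eqref{a1}--\eqref{a2}, and all remaining manipulations are the routine Cauchy--Schwarz and arithmetic-geometric-mean steps of standard DG analysis.
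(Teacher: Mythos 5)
Your proof is correct and follows essentially the same route as the paper: Brezzi's theory with \textbf{(H1)}--\textbf{(H2)} for existence and uniqueness, the compatibility $\div(\Velho)=\calQho$ from \eqref{divin} (with the test function $q=\div\,\bu_h$) for \eqref{divzero}, and coercivity plus consistency for the velocity bound followed by the inf-sup condition \eqref{a2} for the pressure bound. Your closing caveat about the mismatch between $\trin\cdot\trin$ and $\|\cdot\|_{DG}$ on the non-discrete factor $\bu-\bv_h$ is a point the paper's own proof silently glosses over (it writes the continuity bound for $a_h(\bv_h-\bu,\bv_h-\bu_h)$ directly in the $\|\cdot\|_{DG}$-norm, despite having stated continuity only in $\trin\cdot\trin$), so your remark is a refinement of, not a departure from, the paper's argument.
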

\begin{proof}
The existence and uniqueness of the solution of \eqref{dg:bdm0} follow from \eqref{a1}-\eqref{a2}. The divergence-free property \eqref{divzero} is implied by \eqref{divin}, which holds for all our choices of spaces. Let $\bv_h\in \Velho$ also be divergence-free; then we obviously have that $b(\bv_h-\bu_h,q)=0$ for every $q\in L^2(\O)/\re$. In particular, $b(\bv_h-\bu_h,p-p_h)=0$. Hence,
from the coercivity \eqref{a1}, consistency \eqref{consistency}, and continuity of $a_h(\cdot,\cdot)$ we deduce immediately
\begin{equation*}
\gamma\|\bv_h-\bu_h\|^2_{DG}\le a_h(\bv_h-\bu_h,\bv_h-\bu_h)=a_h(\bv_h-\bu,\bv_h-\bu_h)\le \|\bv_h-\bu\|_{DG}\|\bv_h-\bu_h\|_{DG}.
\end{equation*}
On the same basis we deduce that the first estimate in \eqref{error-estimate} follows by triangle inequality.
For every $\bw_h\in\Velho$, using the consistency and continuity of $a_h(\cdot,\cdot)$, we have
\begin{align}\label{perstimap}
b(\bw_h, q_h-p_h)&=
b(\bw_h, q_h-p)+b(\bw_h, p-p_h)
= b(\bw_h, q_h-p)-a_h(\bu-\bu_h,\bw_h)\nonumber\\
&\le (\|q_h-p\|_{0,\O}+\|\bu-\bu_h\|_{DG})\|\bw_h\|_{DG}.
\end{align}
By dividing \eqref{perstimap} by $\|\bw_h\|_{DG}$ and then using the {\it inf-sup} condition \eqref{a2},  we immediately deduce that
$$
\beta \|q_h-p_h\|_{0,\O}\le \|q_h-p\|_{0,\O}+\|\bu-\bu_h\|_{DG},
$$
and that the second estimate in \eqref{error-estimate} follows again by triangle inequality.
\end{proof}

\begin{remark}\label{rt0}
In the assumptions of Theorem\ref{teo0}, we could obviously consider any trio
of finite element spaces satisfying {\bf H0}. However, for choices like ${\bf RT_0}$,
not considered in our three examples, the estimate \eqref{error-estimate} could be
meaningless, as the term $\|\bu-\bvh\|_{DG}$ does not, in general, go to zero
with $h$. Still, this choice could be profitably used, in some cases, as a
preconditioner, as it does satisfy {\bf H0}, {\bf H1}, and {\bf H2}.
\end{remark}


\section{Discrete Helmholtz decompositions}\label{ottob0}

In this section we provide results related to the discrete Helmholtz
decomposition, introduced in Section \ref{sec:1} that plays a key
role in the design of the preconditioner. We wish to note that
Discrete Helmholtz or Hodge decompositions have been shown and used in
several contexts for similar spaces but with other boundary conditions (typically, homogeneous Dirichlet) in \cite{bf86, brezzi-fortin-stenberg, afw00,afw02}. A nice and short
proof in the language of Finite Element Exterior Calculus can be also
found in \cite[p. 72]{afw1}. Here, together with the proof of the
decomposition  with our boundary conditions, we provide an estimate in the DG-norm for the components
in the splitting, that will be essential in the analysis of the
solver, and that, to the best of our knowledge, has not been obtained or used
in any previous work.

\bigskip

So far, we have assumed that the computational domain $\O$ is a polygon (or polyhedron). From now on, for the sake of simplicity,
we are going to work under the stronger assumption that $\O$ is {\it a convex} polygon or polyhedron. As is well known, this
allows the use of better regularity  results, and in particular the $H^2$-regularity for elliptic second-order operators.

Following \cite{brezzi-fortin} we define the discrete gradient operator $\gradh : {\calQho} \lor \Velho$ as
\begin{equation}\label{def-gradh}
(\gradh q_h, \bvh)_{0,\O}:=-(q_h,\div\,\bvh)_{0,\O} \quad \forall\, \bvh \in \Velho .
\end{equation}
\begin{lemma}\label{helmholtz}
  Assume that together the three spaces $(\Velho, \calQho, \Nho)$ (resp. $(\Velho, \calQho, \Nb)$)  satisfy
  assumption $\bf H0$ (given in Definition \ref{defH0}). Then, in $d=2$, for any
  $\bvh\in \Velho$ a unique $q_h\in {\calQho}$ and a
  unique $\varphi_h\in \Nho$ exist such that
\begin{equation}\label{decomp-helm2}
\bvh=\gradh q_h + \curl \,\varphi_h,
\end{equation}
that is,
\begin{equation*}
 \Velho = \gradh ({\calQho}) \oplus \curl \,\Nho .
\end{equation*}
If $d=3$, there exists a $\vect{\psi} \in \Nb$ such that
\begin{equation}\label{decomp-helm3}
\bvh=\gradh q_h + \curl \,\vect{\psi}_h,
\end{equation}
and therefore
\begin{equation*}
 \Velho = \gradh ({\calQho}) \oplus \curl \,\Nb .
\end{equation*}
Moreover, in both cases there exists a constant $C$ independent of $h$ such that the following estimate holds:
\begin{equation}\label{est:s-v0}
\|\gradh q_h\|_{DG}\leq C\|{\rm {div}} \,\bvh\|_{0,\O} .
\end{equation}
\end{lemma}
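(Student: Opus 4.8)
The plan is to separate the purely algebraic construction of the splitting, which rests on the exact sequence and the surjectivity \eqref{divin} of the discrete divergence, from the analytic estimate \eqref{est:s-v0}, which is the genuinely delicate point. First I would introduce the operator $L\colon\calQho\to\calQho$, $Lq_h:=\div(\gradh q_h)$, which is well defined since $\gradh q_h\in\Velho$ and $\div\Velho=\calQho$ by \eqref{divin}. From \eqref{def-gradh} one reads off $(Lq_h,p_h)_{0,\O}=-(\gradh q_h,\gradh p_h)_{0,\O}$, so $L$ is symmetric and negative, and $Lq_h=0$ forces $\|\gradh q_h\|_{0,\O}^2=-(q_h,Lq_h)_{0,\O}=0$, whence $q_h=0$ by \eqref{divin} again. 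Thus $L$ is invertible on the finite-dimensional $\calQho$, and I would \emph{define} $q_h:=L^{-1}(\div\bvh)$, the unique element with $\div(\gradh q_h)=\div\bvh$.

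Setting $\bw_h:=\bvh-\gradh q_h\in\Velho$ then gives $\div\bw_h=\div\bvh-Lq_h=0$, so $\bw_h$ is discretely divergence-free; exactness of \eqref{seqex2d} at $\Velho$ supplies $\varphi_h\in\Nho$ with $\curl\varphi_h=\bw_h$ (in $d=3$, exactness of \eqref{seqex3d} supplies $\vect{\psi}_h\in\Nb$), proving existence. The sum is direct and $q_h$ unique: if $\gradh q_h=\curl\varphi_h$, then taking $\div$ and using $\div\curl=0$ (from \eqref{curlin}) gives $Lq_h=0$, hence $q_h=0$, so $\gradh(\calQho)\cap\curl(\Nho)=\{0\}$; in $d=2$ injectivity of $\curl$ on $\Nho$ (exactness at $\Nho$) further yields uniqueness of $\varphi_h$, while in $d=3$ no uniqueness of $\vect{\psi}_h$ is claimed, consistently with the nontrivial kernel of $\curl$.

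For \eqref{est:s-v0} I would interpret $(s_h,q_h)$, with $s_h:=\gradh q_h$, as the mixed finite element solution in the pair $(\Velho,\calQho)$ of the flux-form Poisson problem with datum $r_h:=\div\bvh$: indeed $(s_h,\bt_h)_{0,\O}+(q_h,\div\bt_h)_{0,\O}=0$ for all $\bt_h\in\Velho$ and $(\div s_h,\psi_h)_{0,\O}=(r_h,\psi_h)_{0,\O}$ for all $\psi_h\in\calQho$. Its continuous counterpart is $\bs=\nabla\phi$ with $\Delta\phi=r_h$, $\partial_n\phi=0$, and note $r_h\in L^2_0(\O)$ since $\int_\O\div\bvh=\int_\Gamma\bvh\cdot\n=0$. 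Here I invoke the convexity of $\O$ assumed at the start of the section: the resulting $H^2$-regularity gives $\bs\in\vect{H}^1(\O)\cap\vect{H}_{0,n}(\div;\O)$ with $\|\bs\|_{1,\O}\le C\|r_h\|_{0,\O}$. With $\bs^I\in\Velho$ the interpolant of \eqref{aprox:00}, I split $\|s_h\|_{DG}\le\|\bs^I\|_{DG}+\|s_h-\bs^I\|_{DG}$. The first term is $\le C\|\bs\|_{1,\O}$ via \eqref{aprox:00} for $|\bs^I|_{1,h}$ and the trace/approximation argument already used in \eqref{eq:jump-semi-norm} for the jump seminorm (recall $\jump{\bs_t}=0$ as $\bs\in\vect{H}^1(\O)$). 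For the second, since $s_h-\bs^I\in\Velho$ is piecewise polynomial, an inverse inequality gives $\|s_h-\bs^I\|_{DG}\le Ch^{-1}\|s_h-\bs^I\|_{0,\O}$, and the standard mixed-method flux estimate $\|\bs-s_h\|_{0,\O}\le Ch\|\bs\|_{1,\O}$ together with $\|\bs-\bs^I\|_{0,\O}\le Ch\|\bs\|_{1,\O}$ from \eqref{aprox:00} yields $\|s_h-\bs^I\|_{0,\O}\le Ch\|\bs\|_{1,\O}$. Hence $\|\gradh q_h\|_{DG}\le C\|\bs\|_{1,\O}\le C\|\div\bvh\|_{0,\O}$.

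The construction and uniqueness are routine once the exact sequence is in hand; the real obstacle is the estimate \eqref{est:s-v0}, and specifically the passage from the $L^2$-control that is natural for $\gradh q_h$ to the stronger $\|\cdot\|_{DG}$-control. This gap is exactly what the $H^2$-regularity and the balancing of the factors $h^{-1}$ (inverse inequality) against $h$ (mixed-method flux error and interpolation) close, which is why convexity of $\O$ is imposed and why the mixed interpretation of $\gradh q_h$, rather than its defining variational characterization alone, is essential.
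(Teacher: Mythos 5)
Your proof is correct and follows essentially the same route as the paper: the decomposition comes from the exact sequence after solving a discrete Neumann problem, and the estimate \eqref{est:s-v0} is obtained by identifying $\gradh q_h$ (up to sign) with the mixed finite element approximation of the continuous Neumann problem $\Delta\phi=\div\,\bvh$, invoking convexity for $H^2$-regularity, and balancing the inverse inequality $h^{-1}$ against the $O(h)$ flux and interpolation errors. Your operator-theoretic construction of $q_h$ via $L=\div\circ\gradh$ is only a cosmetic repackaging of the paper's direct use of the mixed system \eqref{poisson:1}, as you yourself observe when you recognize $(s_h,q_h)$ as the mixed solution.
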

We present the proof in two dimensions; see however  Remark \ref{por3D} after this proof, where the differences for the case $d=3$ are discussed.
\begin{proof}
  For $\bv_h \in \Velho$, consider the auxiliary problem:
 \begin{equation}\label{poisson0}
 -\Delta q=\div\,\bvh \quad \mbox{in  } \O, \qquad \frac{\partial q}{\partial n}=0 \quad \mbox{on  } \partial \O,\quad \mbox{ and }\int_{\O}q\dx=0.
 \end{equation}
Owing to the boundary conditions in $\Velho$, we have that $\div\,\bvh$ has zero mean value in $\O$. Hence, problem \eqref{poisson0} has a unique solution, that satisfies
 \begin{equation}\label{aux2:01}
\|q\|_{2,\O} \leq C_{reg} \|\div \,\bvh\|_{0,\O}.
\end{equation}
We write \eqref{poisson0} in mixed form:
\begin{equation*}
\sig=-\nabla q  \mbox { in  } \O, \quad \div \,\sig = \div \,\bvh  \mbox{ in   } \O, \quad \sig \cdot\n = 0\mbox{  on  } \partial\O.
\end{equation*}
and  we consider directly the approximation of the mixed formulation: {\it Find $(\sig_h , q_h) \in  \Velho \times {\calQho}$ such that :}
 \begin{equation}\label{poisson:1}
 \left\{ \begin{aligned}
 &&(\sig_h, \taub)_{0,\O} - (q_h, \div \,\taub)_{0,\O} &=0 \qquad &\forall\, \taub \in  \Velho , &&\\
 &&(\div \,\sig_h ,s_h)_{0.\O} &=(\div \,\bvh, s_h)_{0,\O} \quad &\forall\, s_h\in {\calQho}. &&
 \end{aligned}\right.
 \end{equation}
 Problem \eqref{poisson:1} obviously has a unique solution, which moreover satisfies
 \begin{equation}\label{approx:01}
 \|\sig-\sig_h\|_{0,\O}\le C \, h\,|\sig |_{1,\O}\le\,C C_{reg}\,h\,\|\div\, \bv_h\|_{0,\O},
 \end{equation}
given that \eqref{aux2:01} was used in the last step. As both $\bvh$ and $\sig_h$ are in $\Velho$ (and
as \eqref{divin} holds), the second
 equation in \eqref{poisson:1}  directly implies that
 \begin{equation*}
 \div\,(\sig_h-\bvh) =0.
 \end{equation*}
 Hence, the exact sequence \eqref{seqex2d} implies that
 \begin{equation}\label{pot2d}
 \mbox{ a unique } \,\varphi_h\in \Nho  \quad \mbox{exists such that}\quad \sig_h-\bvh = \curl \,\varphi_h.
 \end{equation}
%
Next, by using the first equation in  \eqref{poisson:1} and then applying definition \eqref{def-gradh}, we deduce that
 \begin{equation*}
 (\sig_h,\taub)_{0,\O}=(q_h,\div\,\taub)_{0,\O}= -(\gradh q_h, \taub)_{0,\O}  \quad \forall\, \taub \in\Velho,
\end{equation*}
which implies $\sig_h=-\gradh q_h$, that joined to \eqref{pot2d} gives \eqref{decomp-helm2}.

In order to prove \eqref{est:s-v0}, we recall that
\begin{equation}\label{aux2:07}
 \|\gradh q_h\|^{2}_{DG}=\|\sig_h\|^{2}_{DG}= \|\nabla \sig_h\|_{0,\Th}^{2} + |\jump{(\sig_h)_t}|_{\ast}^{2} .
 \end{equation}
 For the first term, by adding and subtracting the interpolant $\sig^I$ of $\sig$ and then using inverse inequality and \eqref{aprox:00}, we have:
 \begin{alignat}{1}
 \|\nabla \sig_h\|_{0,\Th} &\leq \|\nabla (\sig_h-\sig^I)\|_{0,\Th}+\|\nabla \sig^I\|_{0,\Th}\nonumber\\
 &\le C_{inv}h^{-1}\| \sig_h-\sig^I\|_{0,\Th}+ C \|\nabla \sig\|_{0,\Th} .
 \end{alignat}
  From triangle inequality, \eqref{approx:01}, and standard approximation properties  (see \eqref{aprox:00}), we have
 \begin{equation}\label{aux2:09}
 \|\nabla \sig_h\|_{0,\Th}\le C\, \|\div \,\bvh\|_{0,\O}.
 \end{equation}
The jump term in \eqref{aux2:07} is estimated similarly. First, we remark that  $\vect{\sig}=-\nabla q$ with $q\in H^{2}(\O)$ so that $\jump{\vect{\sig}}=0$ on each $e\in \Eho$, and therefore
\begin{equation*}
|\jump{(\vect{\sig}_h)_t} |_{\ast}^{2}= |\jump{(\vect{\sig}_h)_t-\vect{\sig}_t} |_{\ast}^{2}.
\end{equation*}
Then, using  Agmon trace inequalities  \eqref{approx:01}  and the boundedness of $\sig_h$ and $\sig$, we have
  \begin{align*}
|\jump{(\vect{\sig}_h)_t-\vect{\sig}_t} |_{\ast}^{2}=&\sum_{e\in\Eho} h_e^{-1}\|\jump{(\vect{\sig}_h)_t-\vect{\sig}_t}\|_{0,e}^{2} &&\\
 & \leq
C_{t} h^{-2}\|\sig_h-\sig \|_{0,\Th}^{2}+C_t\|\nabla (\sig_h-\sig) \|_{0,\Th}^{2} &&\\
&\leq CC_{reg} \|\div \,\bv_h\|^2_{0,\O} .&&
\end{align*}
Thus the proof is complete.
\end{proof}

\begin{remark}\label{por3D}
For $d=3$, instead of \eqref{pot2d},  the exact sequence \eqref{seqex3d} property implies
  \begin{equation*}
 \exists \,\vect{\psi}_h \in \Nb   \quad \mbox{such that}\quad \sig_h-\bvh = \curl\, \vect{\psi}_h.
 \end{equation*}
The vector potential $\vect{\psi}_h$ would be uniquely determined by adding the condition $\div\, \vp=0$. In fact, on a simply connected domain, ${\rm div}\, \vp=0$ and $\curl\,\vp=0$ together with
$\vp\in \vect{H}_{0,t}(\curl,\O)$ imply $\vp=0$.
However, in general, the solution of ${\rm div}\, \vp=0$ and $\curl\,\vp=\vect{v}_h$ together with
$\vp\in \vect{H}_{0,t}(\curl,\O)$ (which is uniquely determined) does not belong to $ \Nb$. A possibility to select a vector potential $\vect{\psi}_h$ in a unique way could be to compute it as the approximation to  the following continuous problem:
Find $(\vect{\psi},\theta)$ in $\vect{H}_{0,t} (\curl;\O) \times H^{1}_{0}(\O)$  such that
 \begin{equation*}\label{p1:0}
 \begin{aligned}
 (\curl\, \vp ,\curl\, \vect{\phi})_{\Th} +(\nabla \theta, \vect{\phi})_{\Th}&=(\bv_h ,\vect{\phi})_{\Th} \quad &\forall\, \vect{\phi} \in \vect{H}_{0,t} (\curl;\O),&&\\
 (\vp,\nabla s)_{\Th}&=0\quad &\forall\, s \in H^{1}_{0} (\O).&&
 \end{aligned}
 \end{equation*}
 Setting
 \begin{equation*}
\calWho:=\left\{ w \in H^{1}_0(\O)\,\, :\,\,  w |_{\K} \in \Pp^{k+1}(\K)\,\,
\forall\, \K\in \Th \right\},
\end{equation*}
the discrete problem reads: Find $(\vect{\psi}_h,\theta_h) \in \Nb\times \calWho$  such that
\begin{equation}\label{ppp00}
 \begin{aligned}
 (\curl\, {\vp}_h ,\curl\, \vect{\phi}_h)_{\Th} +(\nabla \theta_h, \vect{\phi}_h)_{\Th}&=(\bv_h ,\vect{\phi}_h)_{\Th} \quad &\forall\, \vect{\phi}_h \in \Nb,&&\\
 ({\vp}_h,\nabla w_h)_{\Th}&=0\quad &\forall\, w_h \in \calWho.&&
 \end{aligned}
 \end{equation}
 Problem \eqref{ppp00} has a unique solution satisfying $\curl \, \vect{\psi}_h=\bv_h$ (from the first equation), and  div$\vect{\psi}_h=0$ (from the second equation).
\end{remark}

\section{Preconditioner: Fictitious Space Lemma and Auxiliary Space
    Framework}\label{sec:4}

\subsection{Preconditioner for the semi-definite system}
Assume $V$ is a Hilbert space equipped with the norm $\|\cdot\|_V$ and
that $A: V\mapsto V'$ is a bounded linear operator.  We define the
bilinear form
$$
(u,v)_A=\langle Au,v\rangle.
$$

We say $A$
is symmetric if the bilinear form $(u,v)_A$ is symmetric.
We say that $A$ is  semi-positive definite if
$$
(v,v)_A \ge 0,\quad\forall v\in V
$$
and $\alpha>0$ exists such that
$$
(v,v)_A\geq \alpha\|v\|_{V/N(A)}^2,\quad \forall v\in V/N(A).
$$
And we say that $A$ is SPD (Symmetric Positve Definite) if it is symmetric and $\alpha>0$ exists such that
$$(v,v)_A\geq \alpha\|v\|_V^2,\quad\forall v\in V.$$
One useful property of symmetric semi-positive definite operators is that
\begin{equation}
  \label{vAv}
Av=0 \mbox{ iff } \langle Av, v\rangle =0.
\end{equation}
A preconditioner for $A$ is another symmetric semi-positive definite
operator $B: V'\mapsto V$.  Again, we consider the bilinear form
$$
(f,g)_B=\langle f, Bg\rangle.
$$
The operator $BA: V\mapsto V$ satisfies
$$
(BAu,v)_A=\langle Av, BAu\rangle=(Au,Av)_B.
$$
\begin{lemma} If $A:V\mapsto V'$ and $B:V'\mapsto V$ are both symmetric
semi-positive definite such that $B$ is positive definite on $R(A)$, then
\begin{enumerate}
\item $B: R(A)\mapsto R(BA)$ is an isomorphism (with the
  inverse satisfying trivially that $B^{-1}(BAv)=Av$).
\item The bilinear form $(\cdot,\cdot)_{B^{-1}}$ defines an inner product on $R(BA)$.
\item The bilinear form $(\cdot,\cdot)_A$ defines an inner product on
  $R(BA)$.
\item $BA$ is symmetric positive definite on $R(BA)$ with either of
  the above two inner products.
  \end{enumerate}
\end{lemma}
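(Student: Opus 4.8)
The plan is to treat the four claims in order, using throughout the two elementary facts already recorded: the characterization \eqref{vAv}, and the symmetry of $A$ and of $B$, which read $\langle Ax,y\rangle=\langle Ay,x\rangle$ and $\langle f,Bg\rangle=\langle g,Bf\rangle$. The crucial preliminary is claim (1), from which everything else follows almost formally. By definition $B$ maps $R(A)$ onto $R(BA)$, so I only need injectivity of $B$ restricted to $R(A)$. If $f\in R(A)$ satisfies $Bf=0$, then $(f,f)_B=\langle f,Bf\rangle=0$, and since $B$ is positive definite on $R(A)$ this forces $f=0$; hence $B|_{R(A)}$ is a bijection onto $R(BA)$. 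The same injectivity shows that $B^{-1}(BAv)=Av$ is well defined (if $BAv_1=BAv_2$ then $B(Av_1-Av_2)=0$ with $Av_1-Av_2\in R(A)$, so $Av_1=Av_2$), which is the stated description of the inverse.

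For claims (2) and (3) I would set $(u,w)_{B^{-1}}=\langle B^{-1}u,w\rangle$ and $(u,w)_A=\langle Au,w\rangle$ for $u,w\in R(BA)$; bilinearity is immediate, so only symmetry and positive definiteness need checking. Writing $u=BAv$ and $w=BAv'$, the identity $B^{-1}u=Av$ gives $(u,w)_{B^{-1}}=\langle Av,BAv'\rangle$, and symmetry of $B$ turns this into $\langle Av',BAv\rangle=(w,u)_{B^{-1}}$; moreover $(u,u)_{B^{-1}}=\langle Av,BAv\rangle=(Av,Av)_B$, which is nonnegative and vanishes only when $Av=0$, i.e. only when $u=BAv=0$. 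Symmetry of $(\cdot,\cdot)_A$ is just the symmetry of $A$. For its positive definiteness, suppose $u\in R(BA)$ with $(u,u)_A=0$; then \eqref{vAv} gives $Au=0$, and writing $u=BAv$ and using symmetry of $A$ one gets $0=\langle Au,v\rangle=\langle Av,u\rangle=\langle Av,BAv\rangle=(Av,Av)_B$, whence $Av=0$ and $u=BAv=0$. This last step---converting the degeneracy of $(\cdot,\cdot)_A$ on $R(BA)$ into the degeneracy of $(\cdot,\cdot)_B$ on $R(A)$---is the one genuinely non-formal point, and is precisely where the hypothesis that $B$ be positive definite on $R(A)$ is used; I expect it to be the main obstacle.

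For claim (4), I would first note that $BA$ maps $R(BA)$ into itself (for $u\in R(BA)$ one has $BAu\in R(BA)$ by definition), so it is a well-defined operator there, and then reduce everything to claims (2) and (3) via two identities. Using $B^{-1}(BAu)=Au$ from claim (1),
\begin{equation*}
(BAu,w)_{B^{-1}}=\langle B^{-1}(BAu),w\rangle=\langle Au,w\rangle=(u,w)_A ,
\end{equation*}
so $BA$ represents the form $(\cdot,\cdot)_A$ in the $(\cdot,\cdot)_{B^{-1}}$ inner product; its symmetry and positive definiteness in this inner product are then exactly those of $(\cdot,\cdot)_A$ established in claim (3). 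Similarly, symmetry of $A$ gives $(BAu,u)_A=\langle Au,BAu\rangle=(Au,Au)_B\ge 0$, vanishing only if $Au=0$, i.e. (by the argument of claim (3)) only if $u=0$; combined with the relation $(BAu,w)_A=(u,BAw)_A$, obtained by applying symmetry of $A$, then of $B$, then of $A$ again to $\langle A(BAu),w\rangle$, this yields that $BA$ is symmetric positive definite in the $(\cdot,\cdot)_A$ inner product as well.
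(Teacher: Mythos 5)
Your proof is correct and follows essentially the same route as the paper: the paper proves only item (3), using exactly your degeneracy-transfer argument (from $(v,v)_A=0$ and \eqref{vAv} get $Av=0$, write $v=BAw$, use symmetry to obtain $(Aw,Aw)_B=0$, then invoke positive definiteness of $B$ on $R(A)$), and declares the remaining items similar. Your write-up simply fills in those routine verifications of (1), (2), and (4) that the paper omits.
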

\begin{proof} All these results are pretty obvious, and their proofs are similar. Let
  us give the proof for 3 as an example.

  We only need to verify that $(\cdot,\cdot)_A$ is positive definite
  on $R(BA)$.  If $v\in R(BA)$ is such that $(v,v)_A=0$, then, by
  \eqref{vAv}, we have $Av=0$.  We write $v=BAw$ for some $w\in V$,
  then $ABAw=0$ and hence $(Aw,Aw)_B=0$.  As $B$ is positive definite
  on $R(A)$, we have $Aw=0$.  Thus, $v=ABAw=0$, as desired.
\end{proof}

For the system $Au=f$, we can apply the preconditioner $B$ and the
preconditioned conjugate gradient (PCG) method with respect to the inner
product $(\cdot,\cdot)_{B^{-1}}$ with the following convergence estimate:
$$
\|u-u^k\|_A\le 2\left(\frac{\sqrt{\kappa(BA)}-1}{\sqrt{\kappa(BA)}+1}\right)^k\|u-u^0\|_A.
$$
The condition number can then be estimated by $\kappa(BA) \le
c_1/c_0$, either where
$$
c_0(v,v)_{B^{-1}}\le (BAv,v)_{B^{-1}}\le c_1
(v,v)_{B^{-1}},\quad\forall v\in R(BA),
$$
or equivalently where
$$
c_0(w,w)_{B}\le (Bw,Bw)_{A}\le c_1 (w,w)_{B},
\quad\forall w\in R(A),
$$
or where
$$
c_1^{-1}(v,v)_{A}\le (B^{-1}v,v) \le c_0^{-1}(v,v)_{A}
\quad\forall v\in R(BA).
$$
\subsection{Fictitious space lemma and generalizations}
Let us present and prove a refined version of the Fictitious Space Lemma
originally proposed by Nepomnyaschikh \cite{NEP1991} (see also
\cite{JXU96}).
\begin{lemma}
\label{lm:auxiliary}
Let $\utilde{V}$ and $V$ be two Hilbert spaces, and let
$\Pi:\utilde{V}\mapsto V$ be a surjective map. Let
$\utilde{B}:\utilde{V'}\mapsto \utilde{V}$ be a symmetric and positive
definite operator. Then $B:=\Pi\utilde{B}\Pi'$ is also symmetric and
positive definite (here $\Pi': V'\mapsto\utilde{V}'$ is such that
$\langle\Pi'g,\utilde v\rangle=\langle g,\Pi\utilde v\rangle$,
for all $g\in V'$ and $\utilde v\in \utilde V$).
Furthermore,
 $$
\langle  B^{-1}v,v\rangle =\inf_{\Pi\utilde{v}=v}\langle
\utilde{B}^{-1}\utilde{v},\utilde{v}\rangle.
$$
\end{lemma}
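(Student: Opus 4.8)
The plan is to establish the three assertions in turn: symmetry of $B$, positive definiteness of $B$ (hence invertibility, so that $B^{-1}$ makes sense), and finally the variational identity for $\langle B^{-1}v,v\rangle$.

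For symmetry, I would simply unwind the definition $B=\Pi\utilde B\Pi'$ together with the defining property $\langle \Pi' f,\utilde w\rangle=\langle f,\Pi\utilde w\rangle$. For $f,g\in V'$ this gives $\langle f,Bg\rangle=\langle f,\Pi(\utilde B\Pi' g)\rangle=\langle \Pi' f,\utilde B\Pi' g\rangle$, and the symmetry of $\utilde B$ lets me swap $\Pi' f$ and $\Pi' g$, so that $\langle f,Bg\rangle=\langle \Pi' g,\utilde B\Pi' f\rangle=\langle g,Bf\rangle$. Positive definiteness comes from the same identity with $g=f$: $\langle f,Bf\rangle=\langle \Pi' f,\utilde B\Pi' f\rangle\ge 0$ because $\utilde B$ is SPD, with equality exactly when $\Pi' f=0$. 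Here is the single place I must use that $\Pi$ is \emph{surjective}: it forces $\Pi'$ to be injective, since $\Pi' f=0$ means $\langle f,\Pi\utilde v\rangle=0$ for every $\utilde v$, i.e. $\langle f,w\rangle=0$ for every $w\in V$, whence $f=0$. Thus $B$ is SPD and in particular invertible (in the discrete, finite-dimensional setting relevant here this is automatic).

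For the identity I would avoid Lagrange-multiplier or closed-range machinery and instead argue by a Cauchy--Schwarz inequality in the energy inner product $(\utilde u,\utilde v)_{\utilde B^{-1}}:=\langle \utilde B^{-1}\utilde u,\utilde v\rangle$, which is genuinely an inner product on $\utilde V$ because $\utilde B^{-1}$ is SPD. Fix $v\in V$, put $g:=B^{-1}v\in V'$, and introduce the distinguished preimage $\utilde w:=\utilde B\Pi' g$. Two short computations are the heart of the matter: first, $\Pi\utilde w=\Pi\utilde B\Pi' g=Bg=v$, so $\utilde w$ is admissible in the infimum; second, $(\utilde w,\utilde w)_{\utilde B^{-1}}=\langle \Pi' g,\utilde B\Pi' g\rangle=\langle g,Bg\rangle=\langle g,v\rangle=\langle B^{-1}v,v\rangle$, and, for \emph{any} admissible $\utilde v$ (i.e. $\Pi\utilde v=v$), also $(\utilde w,\utilde v)_{\utilde B^{-1}}=\langle \Pi' g,\utilde v\rangle=\langle g,\Pi\utilde v\rangle=\langle g,v\rangle=\langle B^{-1}v,v\rangle$.

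With these in hand the identity is immediate. Cauchy--Schwarz gives $\langle B^{-1}v,v\rangle=(\utilde w,\utilde v)_{\utilde B^{-1}}\le (\utilde w,\utilde w)_{\utilde B^{-1}}^{1/2}(\utilde v,\utilde v)_{\utilde B^{-1}}^{1/2}=\langle B^{-1}v,v\rangle^{1/2}\,\langle \utilde B^{-1}\utilde v,\utilde v\rangle^{1/2}$; dividing by the factor $\langle B^{-1}v,v\rangle^{1/2}$ (positive for $v\ne 0$, the case $v=0$ being trivial) yields $\langle B^{-1}v,v\rangle\le\langle \utilde B^{-1}\utilde v,\utilde v\rangle$ for every admissible $\utilde v$, hence $\langle B^{-1}v,v\rangle\le\inf_{\Pi\utilde v=v}\langle \utilde B^{-1}\utilde v,\utilde v\rangle$. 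Conversely, evaluating at the admissible $\utilde w$ shows the infimum is $\le\langle B^{-1}v,v\rangle$ and is in fact attained, so the two sides coincide. I expect the only real obstacle to be the duality bookkeeping---keeping straight which pairing lives on $V\times V'$ versus $\utilde V\times\utilde V'$ and applying the adjoint relation in the correct slot---rather than any analytic difficulty; the Cauchy--Schwarz device is precisely what makes the constrained minimization collapse to an elementary computation.
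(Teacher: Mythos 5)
Your proposal is correct and follows essentially the same route as the paper's proof: you use the identical distinguished preimage $\tilde w=\tilde B\Pi'B^{-1}v$ (the paper's $\tilde v^*$), the same injectivity-of-$\Pi'$ argument (from surjectivity of $\Pi$) for positive definiteness, and the same two key pairing computations. The only difference is cosmetic: the paper finishes by expanding the quadratic form along the decomposition $\tilde v=\tilde v^*+\tilde w$ with $\Pi\tilde w=0$ and observing the cross term vanishes, while you package that same orthogonality-on-the-fiber fact as a Cauchy--Schwarz step in the $\tilde B^{-1}$ inner product.
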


\begin{proof} It is obvious that $B$ is symmetric and positive
  semi-definite.  Note that if $v\in V^\prime$ is such that $\langle Bv,v\rangle
  =0$, then $\langle \utilde{B}\Pi'v,\Pi'v\rangle = \langle
  Bv,v\rangle =0$. This means that $\Pi'v=0$ as $\utilde B$ is
  SPD. Hence, $v=0$ as $\Pi'$ is injective.  This proves that $B$ is
  positive definite.

  For any $\utilde{v}\in \utilde{V}$, let $v=\Pi\utilde{v}$ and
  $\utilde{v}^*=\utilde{B}\Pi'B^{-1}v$.  As we obviously have $\Pi
 \utilde{v}^*=v$, we can write $\utilde{v}=\utilde{v}^*+\utilde{w}$ with
  $\Pi\utilde{w}=0$.  Thus,
\begin{equation*}
\begin{split}
\inf_{\Pi\utilde{v}=v}\langle \utilde{B}^{-1}\utilde{v},\utilde{v}\rangle
=&\inf_{\Pi\utilde{w}=0}\langle \utilde{B}^{-1}( \utilde{v}^*+\utilde{w}) ,\utilde{v}^*+\utilde{w}\rangle \\
=&\langle \utilde{B}^{-1}\utilde{v}^*,\utilde{v}^*\rangle +\inf_{\Pi\utilde{w}=0}\left( \langle \utilde{B}^{-1}\utilde{w},\utilde{w}\rangle +2\langle \utilde{B}^{-1}\utilde{v}^*,\utilde{w}\rangle \right)
\end{split}\end{equation*}
From the definition of $\utilde{v}^*$ we have
$$
\langle \utilde{B}^{-1}\utilde{v}^*,\utilde{v}^*\rangle =
\langle B^{-1}v,\Pi \utilde{v}^*\rangle =\langle B^{-1}v,v\rangle,
$$
and also
$$
\langle \utilde{B}^{-1}\utilde{v}^*,\utilde{w}\rangle =\langle
\utilde{B}^{-1}\utilde{B}\Pi'B^{-1}v,
\utilde{w}\rangle =\langle \Pi'B^{-1}v,\utilde{w}\rangle
=\langle B^{-1}v,\Pi\utilde{w}\rangle =0.
$$
The last two identities lead to the desired result.
\end{proof}
\begin{theorem}
\label{thm:auxiliary}
Assume that $\utilde A: \utilde V\mapsto \utilde V'$ and $A: V\mapsto
V'$ are symmetric semi-definite operators.  We assume that $\Pi:
\utilde V\mapsto V$ is surjective and that $\Pi(N(\tilde A))=N(A)$.  Then
for any SPD operator $\utilde B:\utilde V'\mapsto \utilde V$, we have,
for $B=\Pi\utilde B\Pi'$,
$$
\kappa (BA)\le \kappa(\Pi)\kappa(\utilde B\utilde A).
$$
Here $\kappa(\Pi)$ is the smallest ratio $c_1/c_0$ that satisfies
\begin{equation}
  \label{Pi}
c_1^{-1}\langle Av,v\rangle \le\inf_{\Pi\utilde v=v}\langle \tilde A\tilde v,\tilde v\rangle \le c_0^{-1}\langle Av,v\rangle,\quad \forall v\in R(BA).
\end{equation}
\end{theorem}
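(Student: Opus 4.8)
The plan is to derive the two–sided spectral equivalence between $B^{-1}$ and $A$ on $R(BA)$ from which $\kappa(BA)$ is read off, by chaining the Fictitious Space Lemma (Lemma \ref{lm:auxiliary}) with the two spectral equivalences defining $\kappa(\utilde B\utilde A)$ and $\kappa(\Pi)$. Since $\utilde B$ is SPD and $\Pi$ is surjective, Lemma \ref{lm:auxiliary} applies and gives, for every $v$,
\begin{equation*}
\langle B^{-1}v,v\rangle=\inf_{\Pi\utilde v=v}\langle\utilde B^{-1}\utilde v,\utilde v\rangle .
\end{equation*}
Writing $\kappa(\utilde B\utilde A)=\lambda_1/\lambda_0$ through $\lambda_1^{-1}\langle\utilde A\utilde v,\utilde v\rangle\le\langle\utilde B^{-1}\utilde v,\utilde v\rangle\le\lambda_0^{-1}\langle\utilde A\utilde v,\utilde v\rangle$ on $R(\utilde B\utilde A)$, and $\kappa(\Pi)=\mu_1/\mu_0$ through \eqref{Pi}, the goal is to establish $(\lambda_1\mu_1)^{-1}\langle Av,v\rangle\le\langle B^{-1}v,v\rangle\le(\lambda_0\mu_0)^{-1}\langle Av,v\rangle$ for $v\in R(BA)$; the condition number estimate of Section \ref{sec:4} then yields $\kappa(BA)\le(\lambda_1/\lambda_0)(\mu_1/\mu_0)$, as claimed.

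The lower bound is the clean half. Decomposing $\utilde V=N(\utilde A)\oplus R(\utilde B\utilde A)$ orthogonally in the $\utilde B^{-1}$–inner product, and letting $P$ be the projection onto $R(\utilde B\utilde A)$, one has $\langle\utilde A\utilde v,\utilde v\rangle=\langle\utilde A P\utilde v,P\utilde v\rangle$ and $\langle\utilde B^{-1}\utilde v,\utilde v\rangle\ge\langle\utilde B^{-1}P\utilde v,P\utilde v\rangle$; applying the spectral equivalence to $P\utilde v\in R(\utilde B\utilde A)$ produces the \emph{pointwise} bound $\langle\utilde B^{-1}\utilde v,\utilde v\rangle\ge\lambda_1^{-1}\langle\utilde A\utilde v,\utilde v\rangle$, valid for every $\utilde v$. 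Taking the infimum over $\{\Pi\utilde v=v\}$ and invoking the left inequality in \eqref{Pi} gives the lower bound. For the upper bound the analogous pointwise inequality fails, so I would instead test the infimum only with competitors lying in $R(\utilde B\utilde A)$: for such $\utilde v$ one has $\langle\utilde B^{-1}\utilde v,\utilde v\rangle\le\lambda_0^{-1}\langle\utilde A\utilde v,\utilde v\rangle$, whence $\langle B^{-1}v,v\rangle\le\lambda_0^{-1}\inf\{\langle\utilde A\utilde v,\utilde v\rangle:\Pi\utilde v=v,\ \utilde v\in R(\utilde B\utilde A)\}$, and it remains to identify this restricted infimum with the unrestricted one appearing in \eqref{Pi}.

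The main obstacle is exactly this reduction to $R(\utilde B\utilde A)$, i.e. showing that restricting the $\utilde A$–minimization to $R(\utilde B\utilde A)$ does not lower its value when $v\in R(BA)$, and this is where the hypothesis $\Pi(N(\utilde A))=N(A)$ enters decisively. The key point is that the minimizer furnished by Lemma \ref{lm:auxiliary}, $\utilde v^{*}=\utilde B\Pi'B^{-1}v$, already lies in $R(\utilde B\utilde A)$: since $v\in R(BA)$ forces $B^{-1}v\in R(A)=N(A)^{\perp}$, for any $n\in N(\utilde A)$ we get $\langle\Pi'B^{-1}v,n\rangle=\langle B^{-1}v,\Pi n\rangle=0$ because $\Pi n\in N(A)$, hence $\Pi'B^{-1}v\in R(\utilde A)$ and $\utilde v^{*}\in\utilde B(R(\utilde A))=R(\utilde B\utilde A)$. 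Using that the $\utilde A$–form is insensitive to $N(\utilde A)$ while $\Pi(N(\utilde A))=N(A)$, together with the earlier fact that $A$ is positive definite on $R(BA)$ (so $R(BA)\cap N(A)=\{0\}$), I would check that the unconstrained $\utilde A$–minimizer can be taken in $R(\utilde B\utilde A)$ and is mapped by $\Pi$ exactly to $v$, so the two infima coincide. Once this identification is secured, the upper bound follows as above, and multiplying the two chains of constants gives $c_1=\lambda_1\mu_1$, $c_0=\lambda_0\mu_0$, and therefore $\kappa(BA)\le\kappa(\Pi)\kappa(\utilde B\utilde A)$.
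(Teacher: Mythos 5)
Your proposal follows essentially the same route as the paper's proof: both rest on Lemma \ref{lm:auxiliary} to express $\langle B^{-1}v,v\rangle$ as the fiber infimum of the $\utilde B^{-1}$-form, both chain this with the spectral equivalence of $\utilde B^{-1}$ and $\utilde A$ on $R(\utilde B\utilde A)$ and with \eqref{Pi}, and both hinge on the inclusion $\Pi'(R(A))\subset R(\utilde A)$, which places the explicit minimizer $\utilde v^{*}=\utilde B\Pi'B^{-1}v$ inside $R(\utilde B\utilde A)$ for $v\in R(BA)$. In two spots you are actually tighter than the paper: you \emph{prove} that inclusion (via $B^{-1}v\in R(A)\subseteq N(A)^{\circ}$ and $\Pi(N(\utilde A))\subseteq N(A)$), where the paper only asserts it; and your lower bound, built on the $\utilde B^{-1}$-orthogonal splitting $\utilde V=N(\utilde A)\oplus R(\utilde B\utilde A)$ (orthogonality being $\langle\utilde B^{-1}\utilde B\utilde A\utilde w,\utilde n\rangle=\langle\utilde A\utilde n,\utilde w\rangle=0$), yields the pointwise inequality $\langle\utilde B^{-1}\utilde v,\utilde v\rangle\ge\lambda_1^{-1}\langle\utilde A\utilde v,\utilde v\rangle$ on all of $\utilde V$, bypassing any restricted-versus-unrestricted comparison for that half; the paper instead routes both bounds through the restricted infimum (which is harmless for the lower bound, since restricting can only increase an infimum).

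The step you leave unexecuted (``I would check that the unconstrained $\utilde A$-minimizer can be taken in $R(\utilde B\utilde A)$ and is mapped by $\Pi$ exactly to $v$'') is precisely the step the paper also does not carry out: its asserted set identity $\{\utilde v:\Pi\utilde v=v\}=\{\utilde v:\Pi\utilde v=v,\ \utilde v\in R(\utilde B\utilde A)\}$ is, as literally written, false unless $N(\Pi)\subseteq R(\utilde B\utilde A)$ (add any $\utilde z\in N(\Pi)\setminus R(\utilde B\utilde A)$ to $\utilde v^{*}$), and what is really needed, in your argument as in the paper's, is that the infimum of the $\utilde A$-form over the fiber is unchanged by restricting to $R(\utilde B\utilde A)$ when $v\in R(BA)$. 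Be aware that your $\utilde v^{*}$ fact identifies the restricted and unrestricted infima of the $\utilde B^{-1}$-form, \emph{not} of the $\utilde A$-form, so it does not by itself close this step; and closing it is not routine, since the natural move --- splitting a fiber element along $N(\utilde A)\oplus R(\utilde B\utilde A)$ --- preserves the energy but shifts the constraint to $v-n$ with $n\in\Pi(N(\utilde A))=N(A)$, and one must still show the optimal shift is $n=0$, which is where $v\in R(BA)$ and $R(BA)\cap N(A)=\{0\}$ must enter. In summary: same architecture, same residual informality at the same spot as the paper, with your supporting facts proved more carefully.
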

\begin{proof}
Denote $\kappa(\utilde B\utilde A)=b_1/b_0$ with $b_1$ and $b_0$ satisfying
$$
b_1^{-1}(\utilde v, \utilde v)_{\utilde A}\le
(\utilde B^{-1}\utilde v, \utilde v)
\le b_0^{-1} (\utilde v, \utilde v)_{\utilde A},\quad\forall \utilde
v\in R(\tilde B\tilde A).
$$
By \eqref{Pi}, we obtain
$$
b_1^{-1}c_1^{-1}\|v\|_A^2\le \inf_{\Pi\utilde v=v,\tilde v\in R(\tilde
  B\tilde A)} (\utilde B^{-1}\utilde v, \utilde v) \le b_0^{-1}
c_0^{-1}\|v\|_A^2,\quad \forall v\in R(BA).
$$
By the assumption that $\Pi(N(\tilde A))=N(A)$, we can prove that
$\Pi'(R(A))\subset R(\tilde A)$ and


$$
\{\tilde v| \Pi\tilde v=v\in R(BA)\}=\{\tilde v| \Pi\tilde v=v\in
R(BA), \tilde v\in R(\tilde B\tilde A)\}.
$$
By Lemma \ref{lm:auxiliary},
$$\inf_{\Pi \tilde v=v, \tilde v\in R(\tilde B\tilde A)}(\tilde B^{-1}\tilde v,\tilde v)=\inf_{\Pi \tilde v=v}(\tilde B^{-1}\tilde v,\tilde v)=(B^{-1}v,v),\quad \forall v\in R(BA).$$
Therefore,
$$b_1^{-1}c_1^{-1}\|v\|_A^2\leq(B^{-1}v,v)\leq b_0^{-1}c_0^{-1}\|v\|_A^2\quad \forall v\in R(BA).$$
\end{proof}
\begin{theorem}\label{thm:BA}
Assume that the following two conditions are satisfied for $\Pi$.  First,
$$
\|\Pi \utilde v\|_A\le c_1\|\utilde v\|_{\utilde A}, \quad\forall
\utilde v\in \utilde V.
$$
Second, for any $v\in V$ there exists $\utilde v \in\utilde V$ such
that $\Pi\utilde v=v$ and
$$
\|\utilde v\|_{\utilde A}\le c_0\|v\|_A.
$$
Then $\kappa(\Pi)\le c_1/c_0$ and, under the assumptions of Theorem \ref{thm:auxiliary},
$$
\kappa (BA)\le \left(\frac{c_1}{c_0}\right)^2\kappa(\utilde B\utilde A).
$$
\end{theorem}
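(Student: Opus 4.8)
The plan is to read Theorem~\ref{thm:BA} as a translation step: the two hypotheses on $\Pi$ are precisely what is needed to verify the two inequalities in the definition~\eqref{Pi} of $\kappa(\Pi)$, after which the estimate on $\kappa(BA)$ is obtained by quoting Theorem~\ref{thm:auxiliary} as a black box. The one bookkeeping point to keep in view is that the hypotheses are stated in the energy \emph{norms} $\|\cdot\|_A$ and $\|\cdot\|_{\utilde A}$, whereas \eqref{Pi} is written in terms of the quadratic forms $\langle Av,v\rangle=\|v\|_A^2$ and $\langle \utilde A\utilde v,\utilde v\rangle=\|\utilde v\|_{\utilde A}^2$; squaring the hypotheses is exactly what produces the factor $(c_1/c_0)^2$.

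First I would prove the right-hand (upper) inequality of \eqref{Pi}. Fix $v\in R(BA)$. The second hypothesis furnishes one distinguished preimage $\utilde v$ with $\Pi\utilde v=v$ whose energy is controlled by $\|v\|_A$; since the infimum in \eqref{Pi} ranges over \emph{all} preimages of $v$, it is at most the value attained at this particular $\utilde v$, namely $\inf_{\Pi\utilde w=v}\langle\utilde A\utilde w,\utilde w\rangle\le\langle\utilde A\utilde v,\utilde v\rangle=\|\utilde v\|_{\utilde A}^2$, and squaring the second hypothesis bounds the right-hand side by a multiple of $\langle Av,v\rangle$.

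Next I would prove the left-hand (lower) inequality, and this is where a little care is needed: here the first hypothesis must be used for \emph{every} preimage at once. Indeed, if $\Pi\utilde w=v$ then $\|v\|_A=\|\Pi\utilde w\|_A\le c_1\|\utilde w\|_{\utilde A}$, so $\langle\utilde A\utilde w,\utilde w\rangle=\|\utilde w\|_{\utilde A}^2\ge c_1^{-2}\|v\|_A^2=c_1^{-2}\langle Av,v\rangle$; because the bound is uniform over all $\utilde w$ with $\Pi\utilde w=v$, it survives passage to the infimum. (Had one applied the first hypothesis only to the distinguished lifting coming from the second, the infimum could in principle be strictly smaller and the inequality would fail; this is the only genuine subtlety in the argument.) Combining the two displays shows that \eqref{Pi} holds with the ratio of the resulting constants equal to $(c_1/c_0)^2$, hence $\kappa(\Pi)\le(c_1/c_0)^2$.

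Finally, the hypotheses of Theorem~\ref{thm:auxiliary} are in force by assumption, in particular $\Pi(N(\utilde A))=N(A)$, which is what lets Lemma~\ref{lm:auxiliary} identify the constrained infimum with $\langle B^{-1}v,v\rangle$, so its conclusion $\kappa(BA)\le\kappa(\Pi)\,\kappa(\utilde B\utilde A)$ applies verbatim and yields $\kappa(BA)\le(c_1/c_0)^2\,\kappa(\utilde B\utilde A)$, as claimed.
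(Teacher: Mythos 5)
Your route is exactly the intended one: the paper states Theorem~\ref{thm:BA} without proof, as an immediate corollary of Theorem~\ref{thm:auxiliary}, and your two main steps are the right ones. In particular you correctly observe that the lower inequality in \eqref{Pi} must be verified \emph{uniformly} over all preimages before passing to the infimum, while the upper inequality needs only the one distinguished lifting, and you correctly route the kernel condition $\Pi(N(\utilde A))=N(A)$ through Theorem~\ref{thm:auxiliary} and Lemma~\ref{lm:auxiliary}.

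There is, however, a genuine error in your final constant bookkeeping. Squaring the two hypotheses as you do yields, for $v\in R(BA)$,
\begin{equation*}
c_1^{-2}\langle Av,v\rangle\;\le\;\inf_{\Pi\utilde v=v}\langle \utilde A\utilde v,\utilde v\rangle\;\le\;c_0^{2}\,\langle Av,v\rangle,
\end{equation*}
so in the notation of \eqref{Pi} the admissible constants are $c_1^{2}$ (for the lower bound) and $c_0^{-2}$ (for the upper bound), and the ratio they produce is $c_0^{2}\cdot c_1^{2}=(c_0c_1)^2$, \emph{not} $(c_1/c_0)^2$ as you assert; your claim that ``squaring the hypotheses is exactly what produces the factor $(c_1/c_0)^2$'' is the false step, since squaring produces $c_1^2$ and $c_0^2$ whose \emph{product}, not quotient, bounds $\kappa(\Pi)$. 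With the hypotheses as literally stated, what follows is $\kappa(\Pi)\le(c_0c_1)^2$ and hence $\kappa(BA)\le(c_0c_1)^2\,\kappa(\utilde B\utilde A)$ --- the classical auxiliary-space bound; the exponent $(c_1/c_0)^2$ would follow only if the lifting estimate were normalized as $\|\utilde v\|_{\utilde A}\le c_0^{-1}\|v\|_A$. A quick sanity check shows the version you wrote down cannot be proved: take $\Pi=\mathrm{id}$ and $\utilde A=t^2A$ with $t>1$, so that $c_1=t^{-1}$ and $c_0=t$ are the best constants; then $B=\utilde B$, $\kappa(BA)=\kappa(\utilde B\utilde A)$, yet $(c_1/c_0)^2=t^{-4}<1$, contradicting the claimed inequality, whereas $(c_0c_1)^2=1$ is sharp. (To be fair, the theorem statement itself is not internally consistent on this point --- its first conclusion $\kappa(\Pi)\le c_1/c_0$, fed into Theorem~\ref{thm:auxiliary}, would give the first power rather than the square --- and in the application, Lemma~\ref{stab:Pi}, only $h$-uniform boundedness of the constants matters, so nothing downstream is affected; but your proof as written establishes the $(c_0c_1)^2$ bound, not the displayed one.)
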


\begin{remark}\label{general} In view of the application of the above
  results to our two dimensional case (as we shall see in the next subsection), it
  would have been enough to restrict ourselves to the symmetric
  positive definite case (instead of the semi-definite case treated in
  the last two subsections). However we preferred to have them in the
  present more general setting, as in this form they are likely to be
  useful in many other circumstances (starting, as natural, from the
  extension of the present theory to the three-dimensional case).
\end{remark}

\subsection{Application to our problem}\label{aux-section}
In this section we design a simple preconditioner for the linear
system resulting from the approximation of the Stokes problem
\eqref{var:0} defined in \eqref{dg:bdm0}-\eqref{method:0}.  Note that
the bilinear form $a_h(\cdot,\cdot)$ defined in \eqref{method:0}
provides a discretization of the vector Laplacian problem
\begin{equation*}
-\bdiv (2\nu  \Gsym{u} ) =\f \quad \mbox{in} ~\Omega, \quad \bu\cdot\n =0,\;\;(\Gsym{u}\cdot \n)\cdot \vect{t}
=0 \mbox{   on  } \Gamma .
\end{equation*}
We denote by $A_h$ the operator associated with $a_h(\cdot,\cdot)$.
As the solution $\bu_h \in \Velho$ of \eqref{dg:bdm0} is
divergence-free, the discrete Helmholtz decomposition
\eqref{decomp-helm2} implies that
\begin{equation*}
\mbox{a unique } \,\psi_h\in \Nho  \quad \mbox{exists such that } ~\bu_h=
  \curl \,\psi_h.
\end{equation*}
At this point, it is convenient to introduce the space $\Velhoo$ as
\begin{equation} \label{defVelhoo}
\Velhoo:=\Velho\cap \vect{H}_0(\div^{0};\O).
\end{equation}
We note that as the sequence \eqref{seqex2d} is exact, we have
\begin{equation} \label{Velhooeqcurl}
\Velhoo\equiv\curl\,\Nho,
\end{equation}
and that the mapping is one-to-one.  Therefore, restricting the bilinear
form $a_h(\cdot,\cdot)$ to $\Velhoo$, in the spirit of Remark \ref{sudiv0},
corresponds here to restricting the trial and test space to
$\Velhoo\equiv\curl (\Nho)$. The discrete problem \eqref{dg:bdm0} then
reduces to the following problem:
{\it Find $\psi_h\in\Velhoo$ such that}
\begin{equation}\label{V0A}
a_h(\psi_h, \varphi_h)=(\f, \varphi_h) \qquad \forall\, \varphi_h \in \Velhoo
\end{equation}
Defining the operator $A_h:\Velhoo\mapsto\Velhoo'$ by $\langle
A_h\psi_h, \varphi_h\rangle =a_h(\psi_h, \varphi_h), \psi_h,
\varphi_h\in \Velhoo$, we can write \eqref{V0A} as
$$
A_h\psi_h=f_h.
$$
We now use the original space $\Velho$ as the auxiliary space for
$\Velhoo$.  Define $\utilde A_h: \Velho\mapsto \Velho'$ by
$\langle \utilde A_hu_h, v_h\rangle =a_h(u_h, v_h), u_h,v_h\in
\Velho$.   We note that $\utilde A_h$ is a discrete Laplacian.  We
assume that $\utilde B_h$ is an optimal preconditioner for $\utilde A_h$.

We now define the  operator
\begin{equation}\label{pi}
\Pi_h : \Velho \lor \quad\Velhoo\;\equiv\;
\curl (\Nho)
\end{equation}
according to \eqref{decomp-helm2}, namely
$$
\Pi_h\bv_h= \curl \,\varphi_h.
$$
Note that $\Pi_h$ is
a surjective operator and that $\Pi_h$  acts as the identity on the
subspace $ \Velhoo$.  The auxiliary space preconditioner for $A_h$ is
then defined by
\begin{equation}\label{Bh}
B_h=\Pi_h\utilde B_h\Pi_h^*.
\end{equation}

\begin{lemma}\label{stab:Pi}
  Assume that the spaces$ (\Velho, \calQho, \Nho)$ satisfy
  assumption ${\bf H0}$.  Then $B_h$ given by \eqref{Bh} is an optimal
  preconditioner for $A_h$ as long as $\utilde B_h$ is an optimal
  preconditioner for $\utilde A_h$.
\end{lemma}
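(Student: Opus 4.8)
The plan is to deduce the result directly from Theorem~\ref{thm:BA} (and hence Theorem~\ref{thm:auxiliary}), applied with $\utilde V=\Velho$, $V=\Velhoo$, the two operators $\utilde A_h$ and $A_h$, and the surjection $\Pi_h$ of \eqref{pi}. The first thing I would record is that, since $a_h(\cdot,\cdot)$ is symmetric and, by the coercivity \eqref{a1} together with its continuity, equivalent to $\|\cdot\|_{DG}^2$, both $\utilde A_h$ and its restriction $A_h$ are in fact symmetric \emph{positive} definite, and the energy norms they induce satisfy
$$
\gamma\,\|\bw\|_{DG}^2\le \|\bw\|_{\utilde A_h}^2=a_h(\bw,\bw)\le C\,\|\bw\|_{DG}^2
$$
on $\Velho$ (and the same on $\Velhoo$ for $A_h$). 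In particular the structural hypotheses of Theorem~\ref{thm:auxiliary} are met: $\Pi_h$ is surjective by Lemma~\ref{helmholtz}, and since both operators are positive definite their kernels are trivial, so that $\Pi_h(N(\utilde A_h))=\{0\}=N(A_h)$ holds automatically.

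It then remains to check the two conditions of Theorem~\ref{thm:BA} with $h$-independent constants. The stable-lifting condition is immediate: for $v\in\Velhoo=\curl\,\Nho$ the discrete Helmholtz decomposition \eqref{decomp-helm2} has a vanishing gradient part (as $\div\,v=0$), so $\Pi_h v=v$; choosing $\utilde v=v\in\Velho$ gives $\Pi_h\utilde v=v$ and $\|\utilde v\|_{\utilde A_h}=\|v\|_{A_h}$, i.e. $c_0=1$. For the boundedness condition I would take any $\utilde v\in\Velho$, write $\utilde v=\gradh q_h+\curl\,\varphi_h$ so that $\Pi_h\utilde v=\curl\,\varphi_h$, and estimate, using the norm equivalence and the triangle inequality,
$$
\|\Pi_h\utilde v\|_{A_h}\le C\,\|\curl\,\varphi_h\|_{DG}\le C\big(\|\utilde v\|_{DG}+\|\gradh q_h\|_{DG}\big).
$$
Here the key step is to control the gradient component: since $\div\,\curl\,\varphi_h=0$ we have $\div\,\utilde v=\div(\gradh q_h)$, so the estimate \eqref{est:s-v0} of Lemma~\ref{helmholtz} applies and gives $\|\gradh q_h\|_{DG}\le C\,\|\div\,\utilde v\|_{0,\O}$; bounding $\|\div\,\utilde v\|_{0,\O}\le C\,|\utilde v|_{1,h}\le C\,\|\utilde v\|_{DG}$ yields $\|\Pi_h\utilde v\|_{A_h}\le c_1\,\|\utilde v\|_{\utilde A_h}$ with $c_1$ independent of $h$.

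With both conditions verified, Theorem~\ref{thm:BA} gives $\kappa(\Pi_h)\le c_1/c_0=c_1$ and $\kappa(B_h A_h)\le c_1^2\,\kappa(\utilde B_h\utilde A_h)$. Since $\utilde B_h$ is an optimal preconditioner for the discrete Laplacian $\utilde A_h$, the factor $\kappa(\utilde B_h\utilde A_h)$ is bounded independently of $h$, and every constant entering $c_1$ (the coercivity and continuity constants of $a_h$, the Helmholtz constant in \eqref{est:s-v0}, and the viscosity-dependent bound for $\div$) is $h$-independent as well; hence $\kappa(B_h A_h)$ is uniformly bounded and $B_h$ is optimal. I expect the only non-routine point to be the boundedness of $\Pi_h$, which rests entirely on the DG-norm bound \eqref{est:s-v0} for the gradient part of the discrete Helmholtz splitting --- exactly the estimate singled out as the new ingredient of Lemma~\ref{helmholtz}.
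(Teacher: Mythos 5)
Your proof is correct and takes essentially the same approach as the paper: both apply Theorem~\ref{thm:BA} by verifying local stability of $\Pi_h$ through the discrete Helmholtz decomposition \eqref{decomp-helm2} and the key DG-norm bound \eqref{est:s-v0} on the gradient component (with $\|\div\,\bv_h\|_{0,\O}$ controlled by $\|\bv_h\|_{DG}$), and by obtaining the stable lifting with constant $1$ since $\Pi_h$ acts as the identity on $\Velhoo$. Your extra checks --- the equivalence of the $a_h$-energy norm with $\|\cdot\|_{DG}$ via coercivity \eqref{a1} and continuity, and the trivial-kernel condition $\Pi_h(N(\utilde{A}_h))=N(A_h)$ required by Theorem~\ref{thm:auxiliary} --- merely make explicit details the paper leaves implicit.
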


\begin{proof}
  Following the auxiliary space techniques (Theorem~\ref{thm:BA}), we need to check that the
  following two properties are satisfied:
\begin{description}
\item[{\bf (A1)}] Local Stability: there exists a positive constant
  $C_1$ independent of $h$ such that
\begin{equation}\label{a11}
\|\Pi_h \bv_h\|_{DG} \leq C_1 \|\bv_h\|_{DG} \quad \forall\, \bv_h \in  \Velho
\end{equation}
\item[{\bf (A2)}] Stable decomposition: there exists a positive
  constant $C_2$ independent of $h$ such that for any
  $\bw_h\in\Velhoo$ there exists $\bv_h\in\Velho$ such that
  $\Pi_h\bv_h=\bw_h$ and
\begin{equation}\label{a22}
 \|\bv_h\|_{DG}\leq C_2 \|\bw_h\|_{DG}.
\end{equation}
\end{description}
To prove \eqref{a11} from the Helmholtz decomposition
\eqref{decomp-helm2} and the definition \eqref{pi} of $\Pi_h$, we have
\begin{equation}\label{splitbvh}
\bv_h=\gradh q_h+ \curl \,\varphi_h =\gradh q_h+\Pi_h\bv_h.
\end{equation}
Using estimate \eqref{est:s-v0} from Lemma \ref{helmholtz} and the
clear fact that $\div\,\bvh$ is the trace of $\Gsym{\bvh}$, we have
\begin{equation}\label{boundghbvh}
\|\gradh q_h\|_{DG}\leq C\|\div \,\bv_h\|_{0,\O} \leq C\| \Gsym{v}\|_{0,\Th}\leq C\|\bv_h\|_{DG} .
\end{equation}
Hence, \eqref{a11} follows from \eqref{splitbvh} and \eqref{boundghbvh}:
\begin{equation*}
\|\Pi_h \bv_h\|_{DG}=\|\bv_h -\gradh q_h\|_{DG}\leq \|\bv_h\|_{DG}+\|\gradh q_h\|_{DG}\leq C \|\bv_h\|_{DG} .
\end{equation*}
Finally, the inequality~\eqref{a22} holds with $C_2=1$ by taking $\bvh=\bw_h$.
\end{proof}

\section{Numerical experiments}\label{sec:5}

\subsection{Setup}
The tests presented in this section use discretization by the lowest
order, namely, $\BDM_{1}$ elements paired with piece-wise constant space for the
pressure. They verify the \emph{a priori} estimates given in
Theorem~\ref{teo0} and confirm the uniform bound on the condition
number of the preconditioned system for the velocity.

As previously set up, the discrete problem under consideration is given
by equation~\eqref{dg:bdm0} with bilinear forms $a_h(\cdot,\cdot)$ and
$b(\cdot,\cdot)$ defined in~\eqref{method:0}.  In the numerical
tests presented here, we take $\nu=1/2$ and the penalty parameter $\alpha=6$
in~\eqref{method:0}.
\begin{figure}[!ht]
  \subfloat[Coarsest mesh]{\includegraphics*[width=0.35\textwidth]{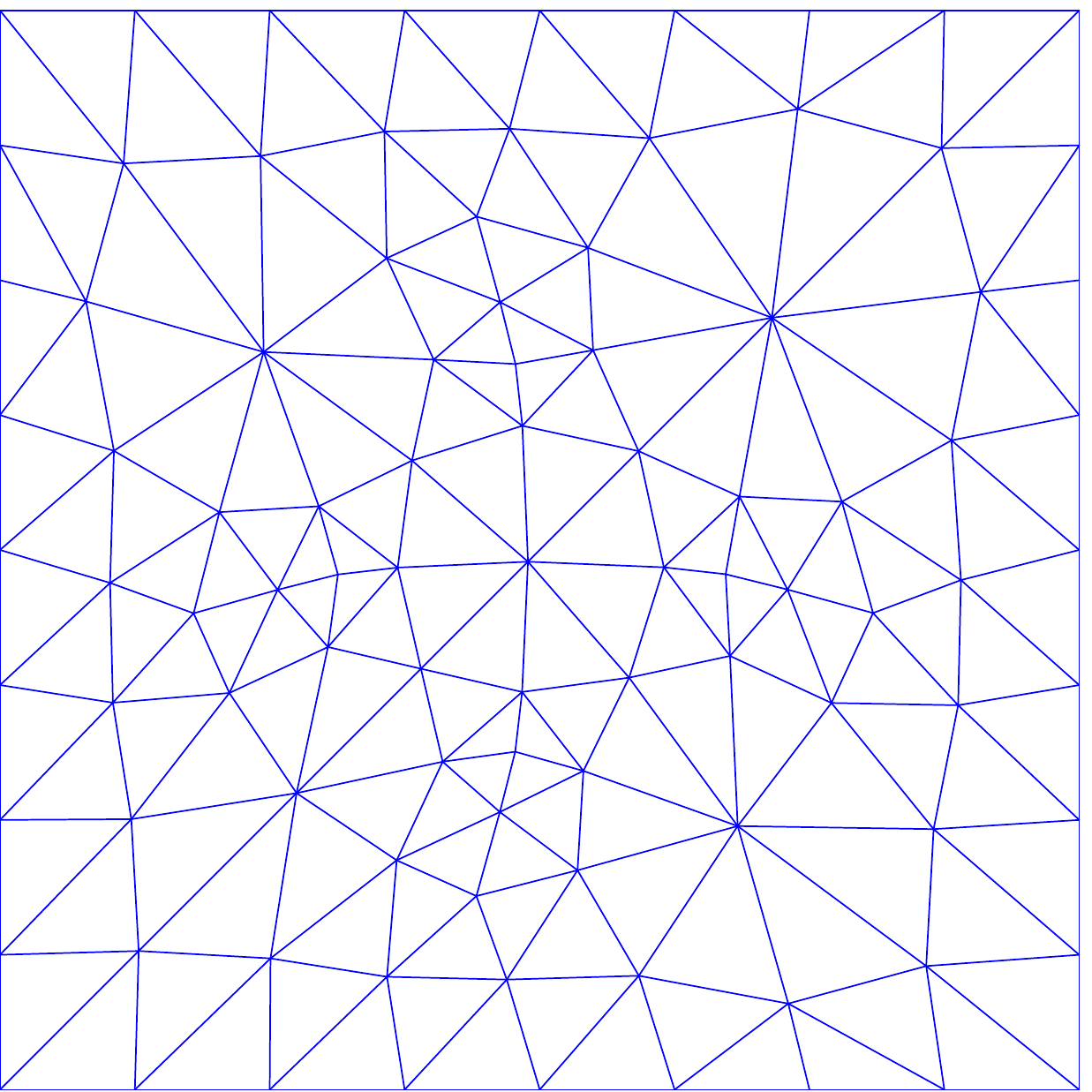}}
  \hspace*{0.1\textwidth} \subfloat[Mesh for level of refinement
  $J=3$]{\includegraphics*[width=0.35\textwidth]{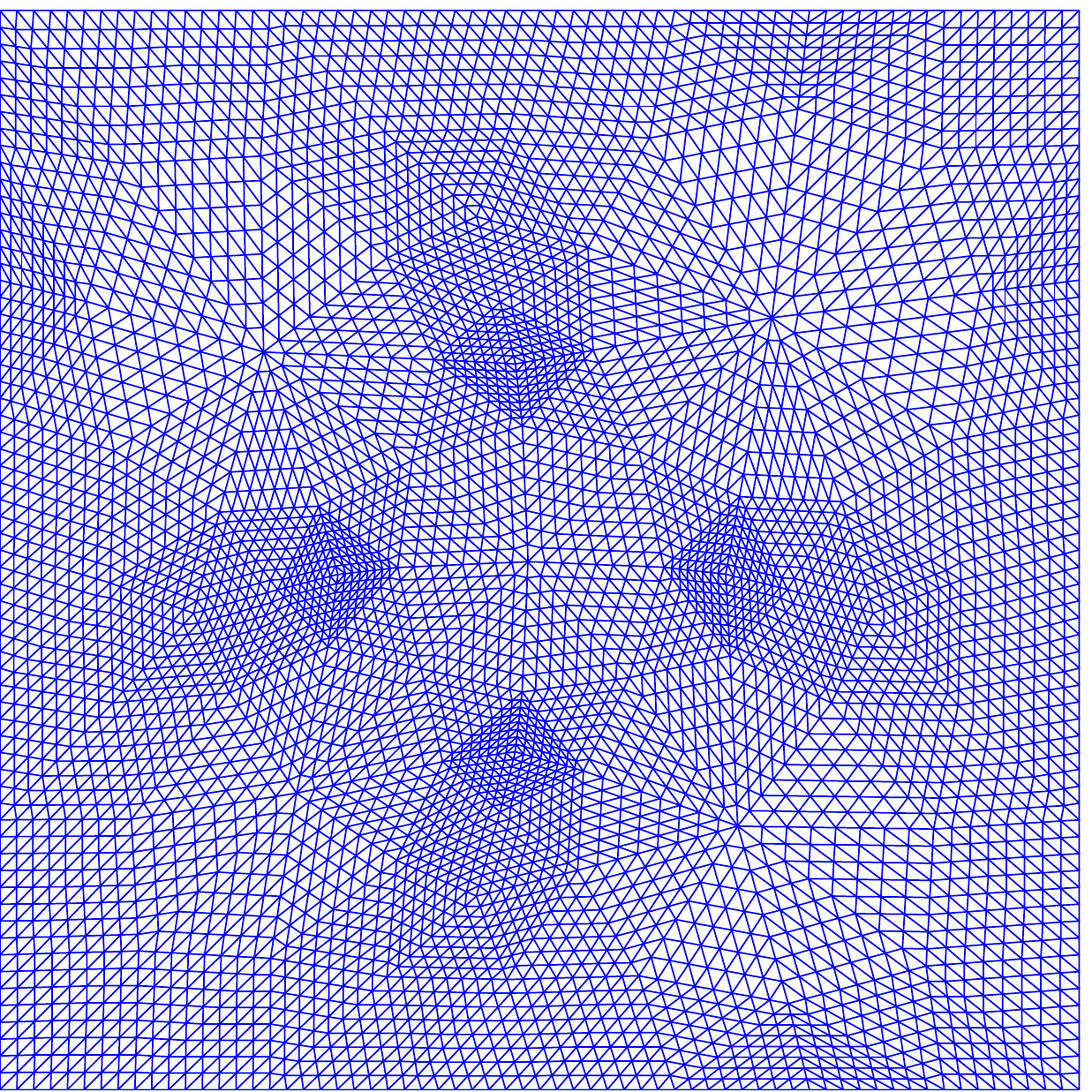}}
\caption{Meshes used in the tests for the unit square domain\\
$\Omega=(0,1)\times(0,1)$\label{fig:sq0and3}}
\end{figure}
We present two sets of tests with $A$ corresponding to the Stokes
equation discretized on a sequence of successively refined
unstructured meshes as shown in
Figures~\ref{fig:sq0and3}--\ref{fig:L0and3}.  On the square the
coarsest mesh (level of refinement $J=0$) has $160$ elements and $97$
vertices with $448$ BDM degrees of freedom. The finer triangulations
of the square domain are obtained via $1,\ldots,5$ regular refinements
(every element divided in $4$) and the finest one is with $163,840$
elements, $82,433$ vertices and $490,496$ BDM degrees of freedom.
Similarly for the $L$-shaped domain we start with a coarsest grid
($J=0$) with $64$ vertices and $97$ elements. For the $L$-shaped
domain the finest grid (for $J=5$) has $99,328$ elements, $50,129$ vertices
and $297,056$ $\BDM_1$ degrees of freedom.
\begin{figure}[!ht]
  \subfloat[Coarsest mesh]{\includegraphics*[width=0.35\textwidth]{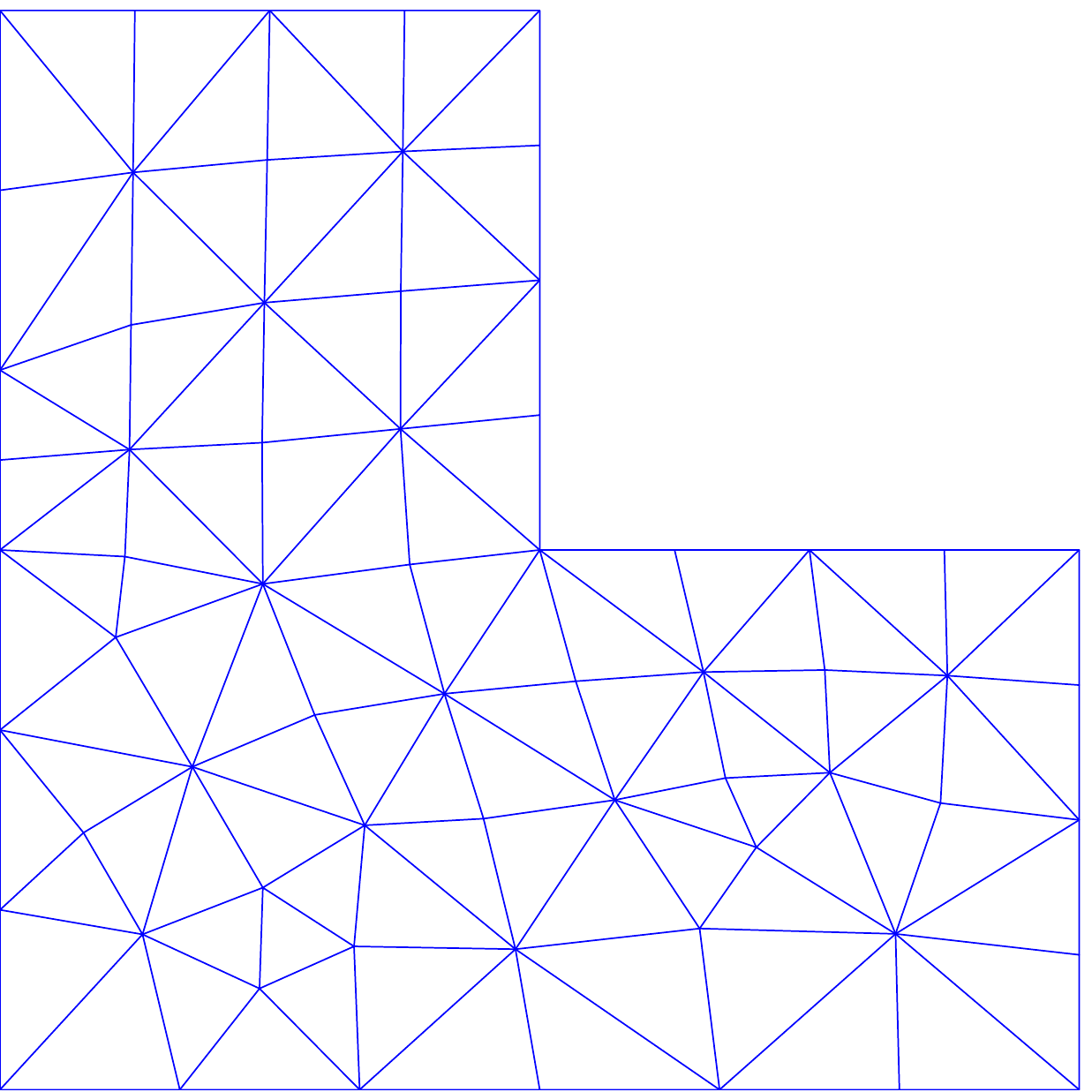}}
  \hspace*{0.1\textwidth} \subfloat[Mesh for level of refinement
  $J=3$]{\includegraphics*[width=0.35\textwidth]{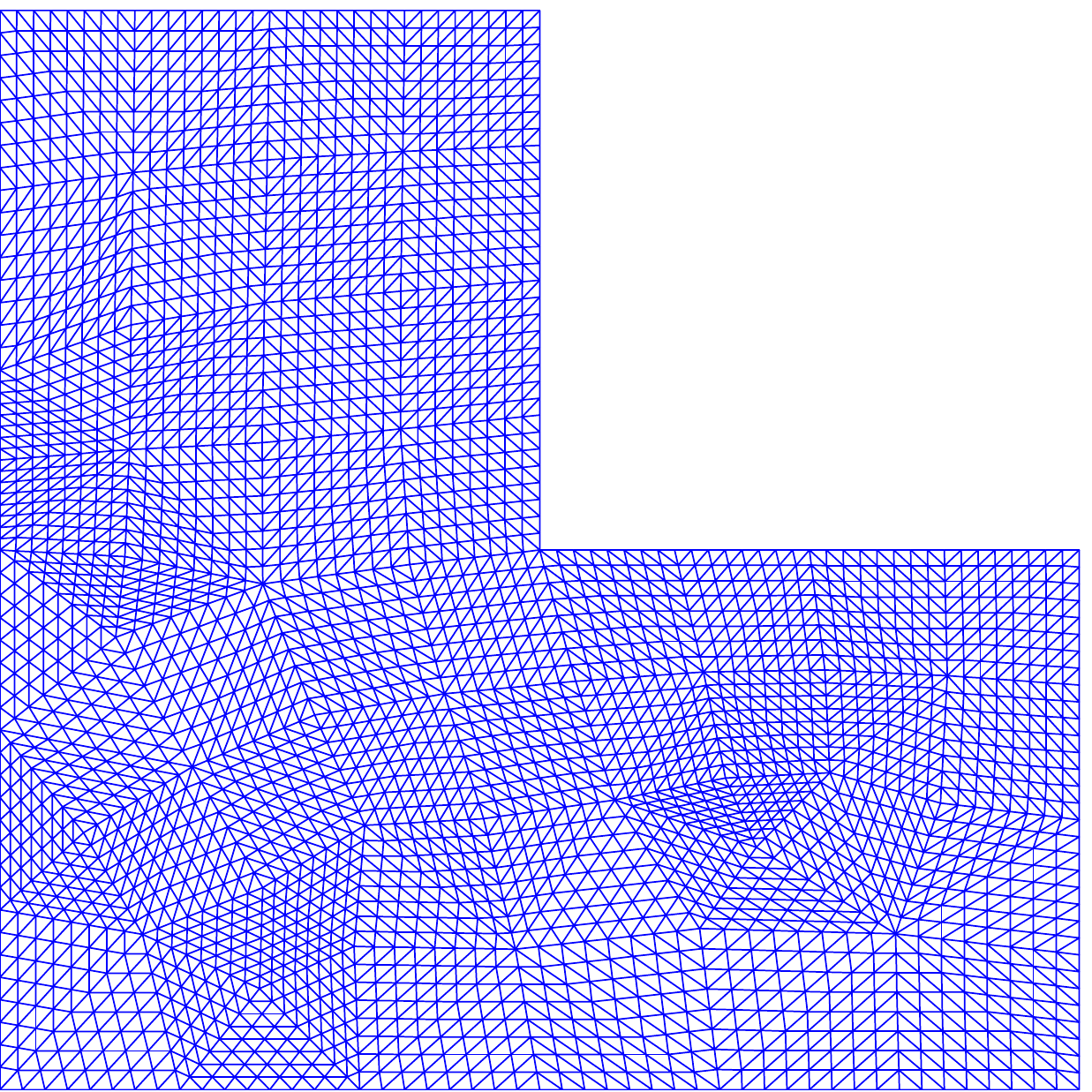}}
\caption{Meshes used in the tests for the $L$-shaped domain\\
  $\Omega=((0,1)\times (0,1))\setminus([\frac12,1)\times [\frac12,1))$\label{fig:L0and3}}
\end{figure}
In the computations, we approximate the velocity component $\bu_h$ of
the solution of the Stokes equation by solving several simpler
equations (such as scalar Laplace equations). After we obtain the
velocity, the pressure then is found via a postprocessing step at low
computational cost.  Further, for this sequence of grids the $\BDM_1$
interpolant of a function $\bv$ on the $k-th$ grid is denoted by
$\bv^{I_k}$. Accordingly the piece-wise constant, $L_2$-orthogonal
projection of $p$ is denoted by $p^{I_k}$.  We also use the notation
$(\bu_k,p_k)$ for the solution of~\eqref{dg:bdm0} on the $k-th$ grid, $k=0,\ldots,5$.
 \subsection{Discretization error} We now present several tests
 related to the error estimates given in the previous sections.
 We computed and tabulated  approximations of the order of
 convergence of the discrete solution in different norms. These
 approximations are denoted by
 $\gamma_0\approx\beta_0$, $\gamma_{DG}\approx\beta_{DG}$, 
 $\gamma_p\approx\beta_p$, and $\gamma_{*}\approx\beta_{*}$. The
 actual orders of convergence  $\beta_0$, $\beta_{DG}$, $\beta_{p}$,
 and $\beta_{*}$ are
\begin{equation*}
\begin{array}{l}
\|\bu-\bu_h\|_{0,\Omega}\approx C(\bu)h^{\beta_0},\quad
\|\bu-\bu_h\|_{DG}\approx C(\bu)h^{\beta_{DG}}, \\
\|p-p_h\|_{0,\Omega}\approx C(\bu,p)h^{\beta_p},\quad
|\jump{\bu_h}|_{*}\approx C(\bu)h^{\beta_{*}}.
\end{array}
\end{equation*}
Here, as in~\eqref{eq:jump-semi-norm}, we denote
$$|\jump{\bv}|^2_{*} = \sum_{e\in\Eho} h_e^{-1}\int_e\jump{\bu_t}^2\,ds.$$
Note that $\beta_*$ is the order with which the
jumps in the approximate solution (not in the error) go to zero.

We present two sets of experiments to illustrate the results given
in Theorem~\ref{teo0}. First, we consider the exact given solution and
calculate the right--hand side and the boundary conditions from this
solution. We set
\begin{equation}\label{eq:uexact}
\phi=xy(1-x)(2x-1)(y-1)(2y-1),\qquad \bu=\curl\phi.
\end{equation}
Clearly, the function $\phi$ vanishes on the boundary of both the
domains under consideration and we take $\bu$ defined
in~\eqref{eq:uexact} as exact solution for the velocity for both the
square and the $L$-shaped domains.  For the pressure we choose as
exact solutions functions with zero mean value and select $p$ different
for the square and the $L$-shaped domain, namely
\begin{equation}\label{eq:pexact}
\begin{array}{ll}
p=x^2-3y^2+\frac83 xy, &\mbox{(square domain),}\\
p=x^2-3y^2+\frac{24}{7}xy, &\mbox{($L$-shaped domain)}.
\end{array}
\end{equation}
The right hand side $\f$ is calculated by plugging $(\bu,p)$ defined
in~\eqref{eq:uexact}--\eqref{eq:pexact}
in~\eqref{Stokes:F}. Table~\ref{table:uh-minus-ui-sq} shows tabulation of
the order of convergence of $(\bu_h,p_h)$ to $(\bu^I,p^I)$ for both
the square domain and the $L$-shaped domain. The values approximating
the order of convergence displayed in
Table~\ref{table:uh-minus-ui-sq} are
\begin{eqnarray*}
&&\gamma = \log_2\frac{\|\bu^{I}_{k-1}-\bu_{k-1}\|}{\|\bu^{I}_{k}-\bu_{k}\|},\quad
\gamma_* = \log_2\frac{|\jump{\bu_{k}}|_*}{|\jump{\bu_{k-1}}|_*},\\
&&\gamma_p=\log_2
\frac{\|p^{I}_{k-1}-p_{k-1}\|_{0,\Omega}}{\|p^{I}_{k}-p_{k}\|_{0,\Omega}},
\quad k=1,\ldots,5.
\end{eqnarray*}
Here $\|\cdot\|$ stands for any of the $DG$ or $L_2$ norms. The
quantity $\gamma$ is the corresponding $\gamma_0$ or $\gamma_{DG}$.
From the results in this table, we can conclude that in
the $\|\cdot\|_{DG}$ norm the dominating error is the interpolation
error, and as the next example shows, in general, the order of
convergence in $\|\cdot\|_{DG}$ is $1$.
\begin{table}
\caption{Approximate order of convergence  for the difference
$(\bu^{I}-\bu_h)$ and $(p^{I}-p_h)$ and the jumps $|\jump{\bu_h}|_*$
for the square and $L$-shaped  domains. Here, $\bu$ and $p$ are given in~\eqref{eq:uexact} and \eqref{eq:pexact}.\label{table:uh-minus-ui-sq}}
\begin{tabular}{||c|c|c|c|c|c||}
\hline
\hline
\multicolumn{6}{||c||}{Square domain}\\
\hline
$k$ &  $1$ & $2$ & $3$ & $4$ & $5$\\
\hline
$\gamma_0$    & 1.75 & 1.87 & 1.94& 1.98 & 1.99 \\
\hline
$\gamma_{DG}$ & 0.98 & 1.0 & 1.00 & 1.00& 1.00 \\
\hline
$\gamma_p$    & 0.94& 0.95& 0.97& 0.99& 0.99 \\
\hline
$\gamma_*$    & 0.77& 0.89& 0.95& 0.98& 0.99 \\
\hline
\end{tabular}
\begin{tabular}{||c|c|c|c|c|c||}
\hline
\hline
\multicolumn{6}{||c||}{L-shaped domain}\\
\hline
$k$ &  $1$ & $2$ & $3$ & $4$ & $5$ \\
\hline
$\gamma_0$    & 1.69 & 1.79& 1.90 & 1.96& 1.98 \\
\hline
$\gamma_{DG}$ &0.97 & 1.01&1.01 & 1.00 & 1.00 \\
\hline
$\gamma_p$    &0.93 & 0.92 &0.95 & 0.97& 0.99\\
\hline
$\gamma_*$    &0.73& 0.85& 0.93& 0.97& 0.99 \\
\hline
\end{tabular}
\end{table}

The second test is for a fixed right hand side $\f=2(1,x)$. We
calculate approximations to the order of convergence of the numerical
solutions on successively refined grids as follows:
\begin{eqnarray*}
&&\gamma=\log_2\frac{\|\bu_{k}-\bu_{k-1}\|}{\|\bu_{k+1}-\bu_{k}\|},
\quad
\gamma_*=\log_2\frac{|\jump{\bu_{k}}|_*-|\jump{\bu_{k-1}}|_*}{|\jump{\bu_{k+1}}|_*-|\jump{\bu_{k}}|_*},\\
&&\gamma_p=\log_2\frac{\|p_{k}-p_{k-1}\|_{0,\Omega}}{\|p_{k+1}-p_{k}\|_{0,\Omega}},\quad
k=1,\ldots,4.
\end{eqnarray*}
Again, $\|\cdot\|$ denotes any of the (semi)-norms of interest and $\gamma$
approximates the corresponding order of convergence.
Table~\ref{table:uh-minus-ui-L}  shows the tabulated values of $\gamma_0$,
$\gamma_{DG}$, $\gamma_p$, and $\gamma_{*}$. It is
clear from these values that the order of approximation for the velocity and the
pressure is optimal for the square domain, whereas for the $L$-shaped
domain the convergence is not of optimal order, due to the singularity of
the solution near the reentrant corner.
\begin{table}
\caption{Approximate order of convergence of the error for square and
  $L$-shaped domains and right--hand side $\f=2(1,x)$. \label{table:uh-minus-ui-L} }
\begin{tabular}{||c|c|c|c|c|c||}
\hline
\hline
\multicolumn{6}{||c||}{Square domain}\\
\hline
$k$ &  $1$ & $2$ & $3$ & $4$ & $5$\\
\hline
$\gamma_0$    &1.70&1.85 &1.93& 1.97  & 1.98\\
\hline
$\gamma_{DG}$ &0.86 &0.95&0.98& 0.99& 1.00\\
\hline
$\gamma_p$    &0.94&0.94&0.97& 0.98  & 0.99\\
\hline
$\gamma_*$    &0.70&0.86&0.94& 0.97  & 0.99\\
\hline
\end{tabular}
\begin{tabular}{||c|c|c|c|c|c||}
\hline
\hline
\multicolumn{6}{||c||}{L-shaped domain}\\
\hline
$k$ &  $1$ & $2$ & $3$ & $4$ & $5$ \\
\hline
$\gamma_0$    & 1.65 & 1.79& 1.86 & 1.74  & 1.24 \\
\hline
$\gamma_{DG}$ & 0.84&0.92& 0.92 & 0.86 & 0.74\\
\hline
$\gamma_p$    &0.91& 0.89& 0.88& 0.82 &  0.70\\
\hline
$\gamma_*$    &0.63& 0.81&0.89& 0.89  & 0.83\\
\hline
\end{tabular}
\end{table}
  The numerical experiments and also the approximations for the orders
  of convergence presented in Table~\ref{table:uh-minus-ui-sq} and
  Table~\ref{table:uh-minus-ui-L} are computed using the FEniCS package~\url{http://fenicsproject.org}.

 \subsection{Uniform preconditioning}
 The tests presented in this subsection illustrate the efficient
 solution of the system~\eqref{eq:matrix} below by Preconditioned
 Conjugate Gradient (PCG) with the preconditioner given
 in~\eqref{eq:preconditioner}. We introduce the matrices representing
 the bilinear forms defined in~\eqref{dg:bdm0}--\eqref{method:0}, and
 also the mass matrix for the $\BDM_1$ space. We denote by
 $\mathbf{M}$ the mass matrix on $\Velho$ and by
 $\widetilde{\mathbf{A}}$ the stiffness matrix associated with
 $a_h(\cdot,\cdot)$ on $\Velho$ in~\eqref{dg:bdm0}--\eqref{method:0}.
 We note that $\mathbf{A}$, without the divergence--free constraint,
 is spectrally equivalent to two scalar Laplacians.

 It is known that the null space of $b(\cdot,\cdot)$
 in~\eqref{dg:bdm0} is made of vector fields that are curls of
 continuous, piecewise quadratic functions vanishing on the boundary.
 We denote by $\mathbf{P}_{\text{curl}}$ the matrix representation of these curls in the BDM space.  Namely,
$$
\curl(\mbox{basis functions in $\Nho$})=
 (\mbox{basis functions in $\Velho$})\mathbf{P}_{\text{curl}}.
$$
It is easy to see that
$$
\mathbf{A}_q = \mathbf{P}_{\text{curl}}^T \mathbf{M} \mathbf{P}_{\text{curl}}.
$$
where $\mathbf{A}_q$ is the discretization of the Laplacian on $N_h$
with homogeneous Dirichlet boundary conditions.

The problem of finding the solution of \eqref{V0A} then amounts to
solving the following algebraic system of equations
\begin{equation}\label{eq:matrix}
\mathbf{P}_{\text{curl}}^T\widetilde{\mathbf{A}} \mathbf{P}_{\text{curl}}
\mathbf{U} = \mathbf{P}_{\text{curl}}^T \mathbf{F}.
\end{equation}
Here the superscript $T$ means that the adjoint is taken with respect to the
$\ell_2$-inner product, $\mathbf{U}$ is the vector containing the
velocity degrees of freedom, and $\mathbf{F}$ is the vector
representing the right--hand side $(\f,\bv)$ of the
problem~\eqref{dg:bdm0}.

The matrix representation $\mathbf{B}$  of the preconditioner $B$
described in the previous section has the following
form:
\begin{equation}\label{eq:preconditioner}
 \mathbf{B}=\mathbf{A}_q^{-1} \mathbf{P}_{\text{curl}}^T
 \mathbf{M} \widetilde{\mathbf{A}}^{-1} \mathbf{M}\mathbf{P}_{\text{curl}}\mathbf{A}_q^{-1}
\end{equation}
In the numerical experiments below we have used the
  preconditioned conjugate gradient provided by MATLAB with the above
  preconditioner.
  We note that one may further make the algorithm more efficient by
  incorporating  approximations $\widetilde{\mathbf{B}}$ (for
  $\widetilde{\mathbf{A}}^{-1}$) and
$B_q$ (for $A_q^{-1}$) in~\eqref{eq:preconditioner}.  In our tests the inverses needed to compute the action of the
  preconditoner, namely $A_q^{-1}$ and $\widetilde{\mathbf{A}}^{-1}$,
  are calculated by the MATLAB's backslash "$\backslash$"
operator (which in turn calls
  the direct solver from
  UMFPACK~\url{http://www.cise.ufl.edu/research/sparse/umfpack/}). The
  tests presented here exactly match the theory for the auxiliary
  space preconditioner given in~Section~\ref{aux-section}.

  In summary, the action of the preconditioner requires the solution
  of systems corresponding to $4$ scalar Laplacians. It is also worth
  noting that suitable multigrid packages for performing these tasks
  are available today.

The convergence rate results are summarized in
Table~\ref{table:sqL}. The legend for the symbols used in the table is
as follows: $n_{it}$ is the number of PCG iterations; $\rho$ is the
average reduction per one such iteration defined as
$\rho=\left[\frac{||r_{n_{it}}||_{\ell{2}}}{||r_{0}||_{\ell{2}}}\right]^{1/{n_{it}}}$;
$J$ is the refinement level, for which $h\approx 2^{-J}h_0$, where
$h_0$ is the characteristic mesh size on the coarsest grid.
\begin{table}[!htb]
\caption{Preconditioning results for square domain (top) and $L$-shaped domain
  (bottom). The PCG
iterations are terminated when the relative residual is smaller than
$10^{-6}$.  \label{table:sqL}}
\begin{tabular}{||c|c|c|c|c|c|c||}
\hline
\hline
\multicolumn{7}{||c||}{Square domain}\\
\hline
J & 0 & 1 & 2 & 3 & 4 & 5\\
\hline
$n_{it}$    & 4 & 4 & 4 & 5 & 5 & 4 \\
\hline
$\rho$ & 0.016 & 0.023 & 0.031 & 0.034 & 0.033 & 0.031  \\
\hline
\end{tabular}
\begin{tabular}{||c|c|c|c|c|c|c||}
\hline
\multicolumn{7}{||c||}{L-shaped domain}\\
\hline
J & 0 & 1 & 2 & 3 & 4 & 5\\
\hline
$n_{it}$    & 5 & 5 & 5 & 5 & 5 & 5 \\
\hline
$\rho$ & 0.044 & 0.061 & 0.061 & 0.058 & 0.055 & 0.053 \\
\hline
\end{tabular}
\end{table}
From the results in Table~\ref{table:sqL}, we can conclude that the
preconditioner is uniform with respect
to the mesh size. It is also evident that this method
is in fact quite efficient in terms of
the number of iterations and the reduction factor.

Let us point out that when the preconditioner is implemented in 3D the
action of $\Pi_h$ requires an implementation of the action of
$L^2$-orthogonal (or orthogonal in equivalent inner product)
projection on the divergence free subspace $\Velhoo$.  This is done by
solving an auxiliary mixed FE discretization of the Laplacian, as
discussed in Section~\ref{ottob0} and in practice it can be
accomplished by considering a projection orthogonal in the inner
product provided by the lumped mass matrix for BDM. In such case the
solution to the auxiliary mixed FE problem corresponds to a solution
of a system with an $M$-matrix and
classical AMG methods~\cite{1982BrandtA_McCormickS_RugeJ-aa} AMG yield
optimal solvers for such problems.  The application of the
preconditioner in the 3D case requires the (approximate) solution
of 5 scalar Laplacians.

Such extensions to 3D and also efficient approximations to
$\widetilde{\mathbf{A}}^{-1}$ and $A_q^{-1}$
in~\eqref{eq:preconditioner} are subject of current research and
implementation and are to be included in a future release of the Fast Auxiliary
Preconditioning Package~\url{http://fasp.sf.net}.



\appendix
\section{Proof of Proposition \ref{perappA}}\label{app:A}
We now state and prove a result,  Proposition \ref{perappA:D} given below,
used in Section \ref{sec:2} to show Korn inequality (cf. Lemma \ref{korn}).
After giving its proof, we comment briefly on how the result can be applied to show the corresponding Korn inequality \eqref{korn1} (cf. Lemma \ref{korn}) for $d=3$.


\begin{proposition}\label{perappA:D}
Let $\K$ be a triangle  (or a tetrahedron for $d=3$)  with minimum angle $\theta>0$, and let $e$ be an edge (resp. face) of $\K$. Then for every $p>2$
and for every integer $k_{max}$
there exists a constant $C_{p,\theta,kmax}$  such that
\begin{equation}\label{propA:D}
\int_e\vect{v}\cdot(\taub\cdot\n)\ds\le C_{p,\theta,k_{max}}\, h_{\K}^{-1/2}\|\vect{v}\|_{0,e}\,
\Big(h_{\K}\|{\rm \vect{div}}\taub\|_{0,\K}\,+\,h_{\K}^{\frac{d(p-2)}{2p}}\|\taub\|_{0,p,\K}\Big)
\end{equation}
for every $\taub\in(L^p(\O))^{d\times d}_{sym}$ having divergence in
$\vect{L}^2$ and for every $\vect{v} \in\Pb^{k_{max}}(\K)$.
\end{proposition}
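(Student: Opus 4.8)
The plan is to turn the face integral into a volume integral over $\K$ by extending $\vect v$ into the element in a way that vanishes on the remaining faces, integrating by parts, and then estimating the two resulting volume terms by Cauchy--Schwarz and by the H\"older inequality; the powers of $\hK$ in \eqref{propA:D} are then recovered by a scaling argument to a fixed reference simplex. The minimum-angle hypothesis enters only through the shape-regularity constants of this scaling, the integer $k_{max}$ only through norm equivalences on the reference element, and the exponent $p$ only through the H\"older pairing.

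The key construction is the extension. Fix a reference simplex $\widehat\K$ with a distinguished face $\widehat e$ and let $F:\widehat\K\to\K$, $F(\widehat x)=B\widehat x+b$, be the affine map taking $\widehat e$ onto $e$; let $\widehat{\vect v}$ be the pullback of the (degree $\le k_{max}$) polynomial $\vect v|_e$. On $\widehat\K$ I construct $\widehat{\bw}$ agreeing with $\widehat{\vect v}$ on $\widehat e$ and vanishing on every other face, and set $\bw:=\widehat{\bw}\circ F^{-1}$, so that $\bw|_e=\vect v$ and $\bw=0$ on $\partial\K\setminus e$. The delicate point is the regularity of $\widehat{\bw}$: since it jumps from $\widehat{\vect v}\neq 0$ on $\widehat e$ to $0$ on the adjacent faces, in transverse polar coordinates $(r,\phi)$ around the codimension-two skeleton $\partial\widehat e$ it follows a fixed angular profile, so $|\widehat\nabla\widehat{\bw}|\simeq r^{-1}$ and $\int_{\widehat\K}|\widehat\nabla\widehat{\bw}|^{p'}\simeq\int r^{\,1-p'}\,dr$ is finite \emph{exactly because $p>2$ makes the conjugate exponent $p'=p/(p-1)<2$}. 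This is the only place the hypothesis $p>2$ is used. Since $\widehat{\vect v}$ ranges over the finite-dimensional space $\Pb^{k_{max}}(\widehat e)$, all norms are equivalent and
\begin{equation*}
\|\widehat{\bw}\|_{0,\widehat\K}+\|\widehat\nabla\widehat{\bw}\|_{0,p',\widehat\K}\le C(p,k_{max})\,\|\widehat{\vect v}\|_{0,\widehat e}.
\end{equation*}

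Next I would integrate by parts. For smooth $\taub$ the Green formula for the row-wise divergence, using that $\bw$ vanishes off $e$, gives
\begin{equation*}
\int_e\vect v\cdot(\taub\n)\,\ds=\int_\K(\bdiv\taub)\cdot\bw\,\dx+\int_\K\taub:\nabla\bw\,\dx,
\end{equation*}
whence, by Cauchy--Schwarz on the first term and H\"older with exponents $p,p'$ on the second,
\begin{equation*}
\Big|\int_e\vect v\cdot(\taub\n)\,\ds\Big|\le\|\bdiv\taub\|_{0,\K}\,\|\bw\|_{0,\K}+\|\taub\|_{0,p,\K}\,\|\nabla\bw\|_{0,p',\K}.
\end{equation*}
Since this bound involves $\taub$ only through $\|\taub\|_{0,p,\K}$ and $\|\bdiv\taub\|_{0,\K}$, it extends from smooth tensors to the full class $\{\taub\in(L^p(\O))^{d\times d}_{\rm sym}:\bdiv\taub\in\vect L^2\}$ by density, the normal trace being continuous from $\vect H(\bdiv;\K)$.

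It then remains to scale. From the minimum-angle hypothesis, $|\det B|$, the surface Jacobian of $F$ on $e$, and $\|B^{-1}\|^{-1}$ are $\simeq\hK^{\,d}$, $\hK^{\,d-1}$, and $\hK$, with constants depending only on $\theta$; hence $\|\widehat{\vect v}\|_{0,\widehat e}\simeq\hK^{-(d-1)/2}\|\vect v\|_{0,e}$, and together with the reference bound above,
\begin{equation*}
\|\bw\|_{0,\K}\le C\,\hK^{1/2}\|\vect v\|_{0,e},\qquad
\|\nabla\bw\|_{0,p',\K}\le C\,\hK^{-\frac12+\frac{d(p-2)}{2p}}\|\vect v\|_{0,e},
\end{equation*}
the last exponent coming from the identity $-1+\frac{d}{p'}-\frac{d-1}{2}=-\frac12+\frac{d(p-2)}{2p}$. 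Substituting and pulling out the common factor $\hK^{-1/2}\|\vect v\|_{0,e}$ reproduces \eqref{propA:D}, with $C_{p,\theta,k_{max}}$ absorbing the reference constant $C(p,k_{max})$ and the shape-regularity constants. The main obstacle is the first step: producing an extension that simultaneously matches $\vect v$ on $e$, vanishes on the other faces, and has finite $L^{p'}$ gradient energy despite the forced jump across $\partial e$ --- which is feasible precisely under $p>2$. Everything after that is bookkeeping of scaling exponents.
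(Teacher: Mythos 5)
Your proof is correct and follows essentially the same route as the paper's: both reduce to a reference simplex by affine scaling, extend the polynomial trace from $e$ into the element by a function vanishing on the remaining faces whose $W^{1,p'}$ norm is controlled using precisely $p'<2$ (i.e.\ $p>2$) together with norm equivalence on the finite-dimensional space $\Pb^{k_{max}}$, integrate by parts, apply Cauchy--Schwarz and H\"older with exponents $p,p'$, and recover the powers of $h_{\K}$ from the scaling. The only cosmetic differences are that the paper uses the harmonic extension on the reference element and transports $\taub$ by the contravariant map before integrating by parts there, whereas you use an explicit cutoff-type extension with $r^{-1}$ gradient blow-up near $\partial e$ and integrate by parts on the physical element (also making explicit the density argument for general $\taub\in(L^p)^{d\times d}_{\rm sym}$ with $L^2$ divergence, a point the paper passes over silently).
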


\proof
First we go to the reference element $\hat{\K}$:
\begin{equation}\label{torefTp3}
\Big|\int_e\vect{{v}}\cdot({\taub}\cdot\n)\,\ds\le C_{\theta}|e|\Big|\int_{\hat{e}}\vect{\hat{v}}\cdot(\hat{\taub}\cdot\hat{\n})\,d\hat{s}\Big|
\le C_{\theta}h_e^{d-1}\Big|\int_{\hat{e}}\vect{\hat{v}}\cdot(\hat{\taub}\cdot\hat{\n})\,d\hat{s}\Big|
\end{equation}
where $\hat{\vect{v}}$ and $\hat{\taub}$ are the usual covariant and
contra-variant images of $\vect{v}$ and $\taub$, respectively. And,
here and throughout his proof, the constants $C_{\theta}$ and
$C_{\theta,k_{max}}$ may assume different values at different
occurrences.  Note that $\hat{\vect{v}}$ will still be a vector-valued
polynomial of degree $\le k_{max}$ and the space $H(\div, \K)$ is
effectively mapped into $H(\div, \hat{\K})$ by means of the
contra-variant mapping. Then for every component $\hat{v}$ of
$\hat{\vect{v}}$, we construct the auxiliary function $\varphi_v$ as
follows. First we define $\varphi_v$ on $\partial\hat{\K}$ by setting
it as equal to $\hat{v}$ on $\hat{e}$ and zero on the rest of
$\partial\hat{\K}$. Then we define $\varphi_v$ in the interior using
the harmonic extension.  It is clear that $\varphi_v$ will belong to
$W^{1,p'}(\hat{\K})$ (remember that $p>2$ so that its conjugate index
$p'$ will be smaller than $2$). Using the fact that $\hat{\vect{v}}$
is a polynomial of degree $\le k_{max}$, it is not difficult to see
that
\begin{equation}
\|\vect{\varphi}_v\|_{W^{1,p'}(\hat{\K})}
\le\,\hat{C}_{\theta,k_{max}}\|\hat{\vect{v}}\|_{0,\hat{e}}.
\end{equation}
Integration by parts then gives
\begin{equation}\label{partsrefTp3}
\begin{aligned}
\int_{\hat{e}}\hat{\vect{v}}\cdot(\hat{\taub}\cdot\hat{\n})\,d\hat{s}&=
\int_{\partial\hat{\K}}\vect{\varphi}_{v}\cdot(\hat{\taub}\cdot\hat{\n})\,d\hat{s}\\
&=
\int_{\hat{\K}}\vect{\nabla}\vect{\varphi}_{v}:\hat{\taub}\,d\hat{x}
-\int_{\hat{\K}}\vect{\varphi}_{v}\cdot{\rm \vect{div}}{\hat{\taub}}\,d\hat{x}\\
&\le
|\vect{\varphi}_v|_{W^{1,p'}(\hat{\K})}\|\hat{\taub}\|_{(L^p(\hat{\K}))^{d\times
    d}_{sym}}+
\|\vect{\varphi}_v\|_{0,\hat{\K}}\|{\rm \vect{div}}{\hat{\taub}}\|_{0,\hat{\K}}\\
&\le
\hat{C}\Big(\|\hat{\vect{v}}\|_{0,\hat{e}}\,\|\hat{\taub}\|_{(L^p(\hat{\K}))^{d\times
    d}_{sym}}\,+\,
\|\vect{\varphi}_v\|_{0,\hat{e}}\|{\rm \vect{div}}{\hat{\taub}}\|_{0,\hat{\K}}\Big)\\
&\le
\hat{C}\,\|\hat{\vect{v}}\|_{0,\hat{e}}\,\Big(\|\hat{\taub}\|_{(L^p(\hat{\K}))^{d\times
    d}_{sym}}\,+\|{\rm \vect{div}}{\hat{\taub}}\|_{0,\hat{\K}}\Big).
    \end{aligned}
\end{equation}
Then we recall the inverse transformations (from $\hat{\K}$ to $\K$):
\begin{eqnarray*}
&& \|\hat{\vect{v}}\|_{0,\hat{e}}\le C_{\theta}h_e^{-\frac{d-1}{2}}\|\vect{v}\|_{0,{e}},\quad
\|\hat{\taub}\|_{(L^p(\hat{\K}))^{d\times d}_{sym}}\le C_{\theta}
h_{\K}^{-\frac{d}{p}}\|{\taub}\|_{(L^p({\K}))^{d\times d}_{sym}},\\
&&\|{\rm \vect{div}}\hat{\taub}\|_{0,\hat{\K}}\le C_{\theta}h_{\K}^{\frac{2-d}{2}}
\|{\rm \vect{div}\taub}\|_{0,\K}.
\end{eqnarray*}
Inserting this into \eqref{partsrefTp3} and then in \eqref{torefTp3} we have then
\begin{equation*}
\int_e\vect{v}\cdot(\taub\cdot\n)\ds\le C_{p,\theta,k_{max}}\,
h_e^{d-1}\,h_e^{-\frac{d-1}{2}}\|\vect{v}\|_{0,{e}}\Big(
h_{\K}^{-\frac{d}{p}}\|{\taub}\|_{(L^p({\K}))^{d\times d}_{sym}}+h_{\K}^{\frac{2-d}{2}}
\|{\rm \vect{div}\taub}\|_{0,\K}
\Big).
\end{equation*}
Now we note that
\begin{equation*}
-\frac{1}{2}+\frac{d(p-2)}{2p}=d-1-\frac{d-1}{2}-\frac{d}{p},
\end{equation*}
and that
\begin{equation*}
-\frac{1}{2}+1=d-1-\frac{d-1}{2}+\frac{2-d}{2},
\end{equation*}
and the proof then follows immediately.
\endproof

With this result in hand, we can show the Korn inequality
\eqref{korn1} given in Lemma \ref{korn} for $d=3$. It is necessary to modify the proof
in only  two places: the definition of the space of rigid
motions on $\O$, $\vect{RM}(\O)$, and the application of Proposition
\ref{propA}. The space $\vect{RM}(\O)$ is now defined by:
\begin{equation*}
\vect{RM}(\O)=\left\{ \, \vect{a}+\vect{b} \vect{x}\,\,:\,\, \vect{a}\in \re^{d} \quad \vect{b} \in so(d) \,\,\right\}
\end{equation*}
with $so(d)$ denoting the  space of the skew-symmetric $d\times d$ matrices.

To prove \eqref{boundtoprove} (and so conclude the proof of \eqref{korn1}), estimate  \eqref{usapropA} is replaced by  estimate  \eqref{usapropA:D} below, which is obtained as follows: first, by applying \eqref{propA:D} (instead of \eqref{propA})  from Proposition \ref{propA:D} to each $e$ in the last term in \eqref{byparts} and then by using
the generalized H\"older inequality with the same exponents as for $d=2$ (with $q=1/2$ and $r=2p/(p-2)$, so that $\frac{1}{p}+\frac{1}{q}+\frac{1}{r}=1$)

\begin{eqnarray}
\sum_{e\in \Eho}\int_e \jump{\bv_t} : \av{ \vect{\tau}} & \le &
C_{p,\theta,kmax}\, \sum_{\K\in\Th}\sum_{e\in\partial\K}
 h_{\K}^{-1/2}\|\jump{\vect{v}_t}\|_{0,e}\,h_{\K}\|{\rm\bf div}\vect{\tau}\|_{0,\K}
\nonumber \\
&& + C_{p,\theta,kmax}\, \sum_{\K\in\Th}\sum_{e\in\partial\K}
h_{\K}^{-1/2}\|\jump{\vect{v}_t}\|_{0,e}\|\,h_{\K}^{\frac{d(p-2)}{2p}}\|\taub\|_{0,p,\K}
 \nonumber\\
& \le &  C h\,|\jump{\vect{v}_t}|_{\ast}\,\|{\rm\bf div}\taub\|_{0,\O}\label{usapropA:D} \\
&&  +
 C\Big(\sum_{e\in\Eho}h_e^{-1}|\jump{\vect{v}_t}|^2_{0,e}\Big)^{1/2}
 \Big(\sum_{e\in\Eho}\|\taub\|_{0,p,\K(e)}^p\Big)^{1/p}
 \Big(\sum_{e\in\Eho}h_e^{\frac{d(p-2)}{2p}
   r}\Big)^{1/r}\nonumber \\
 &\le&
C |\jump{\vect{v}_t}|_{\ast}\,h\,\|{\rm\bf div}\taub\|_{0,\O}+C\,|\jump{\vect{v}_t}|_{\ast}\,\|\taub\|_{0,p,\O}\,\mu({\O})^{1/r}\nonumber
\end{eqnarray}
Here, as in estimate \eqref{usapropA}, $\mu(\O)$
denotes the measure of the domain $\O$, and the constant
$C$ still depends on $p$, $k_{max}$, and on the maximum angle in the
decomposition $\Th$.  The rest of the proof of Lemma \ref{korn}
proceeds as for $d=2$.

\section*{Acknowledgments}
{The authors thank one of the referees for helpful comments on the first version of this work.}
Part of this work was completed while the first author was visiting
IMATI-CNR of Pavia. She is grateful to the IMATI for the kind
hospitality.  The first author was partially supported by MINECO through
grant MTM2011-27739-C04-04.  The second and third authors were
partially supported by the Italian MIUR through the project PRIN2008.
The last two authors were partially supported by National Science
Foundation grant DMS-1217142 and US Department of Energy grant
DE-SC0009603. The authors also thank Feiteng Huang for the
  help with the numerical tests and in particular for putting the
  discretization within the FEniCS framework, {and to Harbir Antil 
for pointing out references \cite{beirao2004, amrouche2011}.}

\bibliographystyle{abbrv}
\bibliography{biblio_only_cited}
\end{document}